\newcommand{\ewain}[1]{{\color{blue}{#1}}}
\theoremstyle{plain}
\newtheorem{thm}{Theorem}[section]
\newtheorem{lem}[thm]{Lemma}
\newtheorem{prop}[thm]{Proposition}
\def\@rst #1 #2other{#1}
\newcommand\MR[1]{\relax\ifhmode\unskip\spacefactor3000 \space\fi
  \MRhref{\expandafter\@rst #1 other}{#1}}
\newcommand{\MRhref}[2]{\href{http://www.ams.org/mathscinet-getitem?mr=#1}{MR#2}}
\theoremstyle{definition}
\newtheorem{defn}[thm]{Definition}
\newtheorem{remark}[thm]{Remark}
\numberwithin{equation}{section}
\newcommand{\dsb}{\begin{adjustwidth}{2.5em}{0pt}
\begin{footnotesize}}
\newcommand{\dse}{\end{footnotesize}
\end{adjustwidth}}
\newcommand{\ssb}{\begin{adjustwidth}{2.5em}{0pt}}
\newcommand{\sse}{\end{adjustwidth}}
\newcommand{\aryb}{\begin{eqnarray*}}
\newcommand{\arye}{\end{eqnarray*}}
\def\alb#1\ale{\begin{align*}#1\end{align*}}
\def\allb#1\alle{\begin{align}#1\end{align}}
\newcommand{\eqb}{\begin{equation}}
\newcommand{\eqe}{\end{equation}}
\newcommand{\eqbn}{\begin{equation*}}
\newcommand{\eqen}{\end{equation*}}
\newcommand{\BB}{\mathbbm}
\newcommand{\ol}{\overline}
\newcommand{\op}{\operatorname}
\newcommand{\frk}{\mathfrak}
\newcommand{\eqD}{\overset{d}{=}}
\newcommand{\ep}{\varepsilon}
\newcommand{\rta}{\rightarrow}
\newcommand{\wt}{\widetilde}
\newcommand{\wh}{\widehat}
\newcommand{\mcl}{\mathcal}
\newcommand{\bdy}{\partial}
\newcommand{\ccM}{{\mathbf{c}_{\mathrm M}}}
\let\originalleft\left
\let\originalright\right
\renewcommand{\left}{\mathopen{}\mathclose\bgroup\originalleft}
\renewcommand{\right}{\aftergroup\egroup\originalright}
\title{Up-to-constants comparison of Liouville first passage percolation and Liouville quantum gravity}
 \date{ }
 \author{
\begin{tabular}{c} Jian Ding\\[-5pt]\small University of Pennsylvania \end{tabular}
\begin{tabular}{c} Ewain Gwynne\\[-5pt]\small University of Chicago \end{tabular}
}
\begin{document}

\maketitle

\begin{abstract}
\emph{Liouville first passage percolation (LFPP)} with parameter $\xi > 0$ is the family of random distance functions $\{D_h^\epsilon\}_{\epsilon >0}$ on the plane obtained by integrating $e^{\xi h_\epsilon}$ along paths, where $\{h_\epsilon\}_{\epsilon >0}$ is a smooth mollification of the planar Gaussian free field. 
Recent works have shown that for all $\xi > 0$ the LFPP metrics, appropriately re-scaled, admit non-trivial subsequential limiting metrics. In the case when $\xi < \xi_{\mathrm{crit}} \approx 0.41$, it has been shown that the subsequential limit is unique and defines a metric on $\gamma$-Liouville quantum gravity $\gamma = \gamma(\xi) \in (0,2)$. 

We prove that for all $\xi > 0$, each possible subsequential limiting metric is nearly bi-Lipschitz equivalent to the LFPP metric $D_h^\epsilon$ when $\epsilon$ is small, even if $\epsilon$ does not belong to the appropriate subsequence. Using this result, we obtain bounds for the scaling constants for LFPP which are sharp up to polylogarithmic factors. We also prove that any two subsequential limiting metrics are bi-Lipschitz equivalent. 

Our results are an input in subsequent work which shows that the subsequential limits of LFPP induce the same topology as the Euclidean metric when $\xi = \xi_{\mathrm{crit}}$. We expect that our results will also have several other uses, e.g., in the proof that the subsequential limit is unique when $\xi \geq \xi_{\mathrm{crit}}$.
\end{abstract}

\tableofcontents

\section{Introduction}

\subsection{Liouville first passage percolation}
\label{sec-lfpp}
 
Let $h$ be the whole-plane Gaussian free field (GFF), normalized so that its average over the unit circle is zero. This means that $h$ is the Gaussian process on $\BB C$ with covariances given by
\eqbn
\op{Cov}(h(z) , h(w))=  \log \frac{\max\{|z|,1\}  \max\{|w|,1\}}{|z-w|} ,\quad\forall z,w\in \BB C ;
\eqen 
see~\cite[Section 2.1.1]{vargas-dozz-notes}.
The GFF does not make sense as a random function, but it can be defined as a random generalized function, meaning that we can define its integral against a smooth test function with sufficiently fast decay at $\infty$. We refer to the expository articles~\cite{shef-gff,berestycki-lqg-notes,pw-gff-notes} for more on the GFF. 

Recent works have shown that for $\xi  >0$, one can construct a random metric on $\BB C$ which is heuristically obtained by ``weighting lengths of paths by $e^{\xi h}$, then taking an infimum". The motivation for considering such a metric comes from the theory of Liouville quantum gravity (LQG). 
The reason for the quotations is that $h$ is a generalized function, not a true function, so $e^{\xi h}$ does not make literal sense. 
Consequently, to construct this metric one needs to take a limit of a family of approximating metrics called \emph{Liouville first passage percolation} (LFPP), which we discuss just below.
The goal of this paper is to prove a quantitative estimate for how close the LFPP metrics are to the limiting metric, which neither implies nor is implied by the convergence. As consequences of this estimate, we will deduce several other estimates for LFPP and LQG which are needed in future works. 

Let us now discuss the definition of LFPP. 
For $t  > 0$ and $z\in\BB C$, we define the heat kernel $p_t(z) := \frac{1}{2\pi t} e^{-|z|^2/2t}$ and we denote its convolution with $h$ by
\eqb \label{eqn-gff-convolve}
h_\ep^*(z) := (h*p_{\ep^2/2})(z) = \int_{\BB C} h(w) p_{\ep^2/2} (z  - w) \, dw  ,\quad \forall z\in \BB C  
\eqe
where the integral is interpreted in the sense of distributional pairing. The reason for integrating against $p_{\ep^2/2}(z,w)$ instead of $p_\ep(z,w)$ is so that the variance of $h_\ep^*(z)$ is of order $\log\ep^{-1} + O_\ep(1)$. 

For a parameter $\xi > 0$, we define the $\ep$-\emph{Liouville first passage percolation} (LFPP) metric associated with $h$ by
\eqb \label{eqn-gff-lfpp}
D_{h}^\ep(z,w) := \inf_P \int_0^1 e^{\xi h_\ep^*(P(t))} |P'(t)| \,dt ,\quad \forall z,w\in\BB C 
\eqe
where the infimum is over all piecewise continuously differentiable paths $P : [0,1]\rta \BB C$ from $z$ to $w$. We will be interested in (subsequential) limits of the re-normalized metrics $\frk a_\ep^{-1} D_h^\ep$, where the normalizing constant is defined by
\eqb \label{eqn-gff-constant}
\frk a_\ep := \text{median of} \: \inf\left\{ \int_0^1 e^{\xi h_\ep^*(P(t))} |P'(t)| \,dt  : \text{$P$ is a left-right crossing of $[0,1]^2$} \right\}      .
\eqe  
Here, by a left-right crossing of $[0,1]^2$ we mean a piecewise continuously differentiable path $P : [0,1]\rta [0,1]^2$ joining the left and right boundaries of $[0,1]^2$. 

The scaling constants $\frk a_\ep$ are not known explicitly, but it is shown in~\cite[Proposition 1.1]{dg-supercritical-lfpp} that for each $\xi > 0$, there exists $Q = Q(\xi) > 0$ such that 
\eqb  \label{eqn-Q-def}
\frk a_\ep = \ep^{1 - \xi Q  + o_\ep(1)} ,\quad \text{as} \quad \ep \rta 0 . 
\eqe 
We call $Q$ the \emph{LFPP distance exponent}. The existence of $Q$ is proven using a subadditivity argument, so its value is not known except that $Q(1/\sqrt 6) = 5/\sqrt 6$.\footnote{The case when $\xi = 1/\sqrt 6$ corresponds to Liouville quantum gravity with $\gamma=\sqrt{8/3}$, and the relation $Q(1/\sqrt 6) = 5/\sqrt 6$  is equivalent to the statement that the Hausdorff dimension of $\sqrt{8/3}$-LQG is equal to 4. See~\cite{dg-lqg-dim} for more detail.}
However, reasonably good rigorous upper and lower bounds for $Q$ in terms of $\xi$ are available~\cite{dg-lqg-dim,gp-lfpp-bounds,ang-discrete-lfpp}.

LFPP undergoes a phase transition at the critical parameter value 
\eqb \label{eqn-xi_crit}
\xi_{\op{crit}} := \inf\{\xi > 0 : Q(\xi) = 2\} .
\eqe
We do not know $\xi_{\op{crit}}$ explicitly, but the bounds from~\cite[Theorem 2.3]{gp-lfpp-bounds} give the approximation $\xi_{\op{crit}}  \in [0.4135 , 0.4189]$.  
 
\begin{defn}
We refer to LFPP with $\xi <\xi_{\op{crit}}$, $\xi = \xi_{\op{crit}}$, and $\xi > \xi_{\op{crit}}$ as the \emph{subcritical}, \emph{critical}, and \emph{supercritical} phases, respectively.
\end{defn}

We now briefly discuss what happens in each of the three phases. 
In the subcritical phase, it was shown by Ding, Dub\'edat, Dunlap, and Falconet~\cite{dddf-lfpp} that the re-scaled LFPP metrics $\frk a_\ep^{-1} D_h^\ep$ admit subsequential limits in probability with respect to the topology of uniform convergence on compact subsets of $\BB C\times \BB C$. Moreover, every possible subsequential limit is a metric which induces the same topology on $\BB C$ as the Euclidean metric. 
Later, it was shown by Gwynne and Miller~\cite{gm-uniqueness} (building on~\cite{local-metrics,lqg-metric-estimates,gm-confluence}) that the subsequential limit is uniquely characterized by a certain list of axioms, so $\frk a_\ep^{-1} D_h^\ep$ converges as $\ep\rta 0$, not just subsequentially. 

The limiting metric can be viewed as the distance function associated with \emph{$\gamma$-Liouville quantum gravity} (LQG) for an appropriate value of $\gamma  = \gamma(\xi) \in (0,2)$. In other words, we interpret the limit of subcritical LFPP as the the Riemannian distance function for the Riemannian metric tensor
\eqbn
e^{\gamma h(x,y)} \, (dx^2 + dy^2) 
\eqen
where $dx^2 + dy^2$ is the Euclidean metric tensor. The relationship between $\gamma$ and $\xi$ is given by either of two equivalent, but non-explicit, formulas:
\eqb \label{eqn-gamma-xi}
Q(\xi) = \frac{2}{\gamma}  +\frac{\gamma}{2} \quad \text{or, equivalently} \quad \gamma = \xi d(\xi) 
\eqe
where $d(\xi) > 2$ is the Hausdorff dimension of $\BB C$, equipped with the limiting metric. Neither of the formulas~\eqref{eqn-gamma-xi} gives an explicit relationship between $\xi$ and $\gamma$ since neither $Q(\xi)$ nor $d(\xi)$ is known explicitly. See~\cite{dg-lqg-dim} for further discussion. 

In the supercritical and critical phases, we showed in~\cite{dg-supercritical-lfpp} that the metrics $\frk a_\ep^{-1} D_h^\ep$ admit non-trivial subsequential limits with respect to the topology on lower semicontinuous functions on $\BB C\times \BB C$, which we recall in Definition~\ref{def-lsc} below. 
Every subsequential limit is a metric (not just a pseudometric). We expect that the subsequential limit is unique, but this has not yet been proven. 

In the supercritical case, if $D_h$ is a subsequential limiting metric, then $D_h$ does not induce the Euclidean topology on $\BB C$. Rather, there is an uncountable, dense, Lebesgue measure zero set of \emph{singular points} $z\in \BB C$ such that 
\eqb \label{eqn-singular-pt}
D_h(z,w) = \infty,\quad \forall w\in\BB C\setminus\{z\} .
\eqe
Roughly speaking, the singular points correspond to the points $z\in\BB C$ for which~\cite[Proposition 1.11]{pfeffer-supercritical-lqg} 
\eqbn
\limsup_{\ep\rta 0} \frac{ h_\ep(z) }{ \log\ep^{-1} }  > Q , 
\eqen
where $h_\ep(z)$ is the average of $h$ over the circle of radius $\ep$ centered at $z$. 

The subsequential limits of supercritical LFPP are related to Liouville quantum gravity with \emph{matter central charge} $\ccM \in (1,25)$. LQG with $\gamma \in (0,2)$ corresponds to $\ccM = 25 - 6(2/\gamma+\gamma/2)^2 \in (-\infty,1)$. The case when $\ccM \in (1,25)$ is much less understood, even from a physics perspective. See~\cite{ghpr-central-charge,dg-supercritical-lfpp} for further discussion. 
 
In the critical case $\xi = \xi_{\op{crit}}$, there are no singular points and the subsequential limiting metrics induce the Euclidean topology. We will prove this in~\cite{dg-critical-lqg}, using the results of the present paper. This case corresponds to $\gamma$-LQG with $\gamma = 2$ or equivalently $\ccM = 1$. 

The goal of this paper is to prove, for each $\xi > 0$, up-to-constant bounds comparing $\frk a_\ep^{-1} D_h^\ep$ to any possible subsequential limit of $\{\frk a_\ep^{-1} D_h^\ep\}_{\ep > 0}$ (Theorem~\ref{thm-lqg-lfpp}). This comparison holds even at nearly microscopic scales, so is not implied by the convergence of $\frk a_\ep^{-1} D_h^\ep$. As consequences of our bounds, we will deduce estimates for the LFPP scaling constants $\{\frk a_\ep\}_{\ep > 0}$ which are new even in the subcritical case (Theorem~\ref{thm-a_eps}). We will also prove that the scaling constants $\{\frk c_r\}_{r>0}$ for the subsequential limiting metric, as defined in Axiom~\ref{item-metric-coord} of Definition~\ref{def-metric} below, can be taken to be equal to $r^{\xi Q}$ (Theorem~\ref{thm-c_r}). In the subcritical case, this was previously proven in~\cite{gm-uniqueness} as a consequence of the uniqueness of the  subsequential limit. The result is new in the critical and supercritical case, and we expect that it will help to simplify the eventual proof of the uniqueness of the subsequential limit in these cases. 

\bigskip

\noindent \textbf{Acknowledgments.} J.D.\ was partially supported by NSF grants DMS-1757479 and DMS-1953848. E.G.\ was partially supported by a Clay research fellowship. 

\subsection{Weak LQG metrics}
\label{sec-weak}

The subsequential limits of LFPP satisfy a list of axioms which define a \emph{weak LQG metric}. 
In this subsection, we will state the axiomatic definition of a weak LQG metric from~\cite{pfeffer-supercritical-lqg}. We first define the topology on the space of metrics that we will work with.

\begin{defn} \label{def-lsc} 
Let $X\subset \BB C$. 
A function $f : X \times X \rta \BB R \cup\{-\infty,+\infty\}$ is \emph{lower semicontinuous} if whenever $(z_n,w_n) \in X\times X$ with $(z_n,w_n) \rta (z,w)$, we have $f(z,w) \leq \liminf_{n\rta\infty} f(z_n,w_n)$. 
The \emph{topology on lower semicontinuous functions} is the topology whereby a sequence of such functions $\{f_n\}_{n\in\BB N}$ converges to another such function $f$ if and only if
\begin{enumerate}[(i)]
\item Whenever $(z_n,w_n) \in X\times X$ with $(z_n,w_n) \rta (z,w)$, we have $f(z,w) \leq \liminf_{n\rta\infty} f_n(z_n,w_n)$.
\item For each $(z,w)\in X\times X$, there exists a sequence $(z_n,w_n) \rta (z,w)$ such that $f_n(z_n,w_n) \rta f(z,w)$. 
\end{enumerate}
\end{defn}

It follows from~\cite[Lemma 1.5]{beer-usc} that the topology of Definition~\ref{def-lsc} is metrizable (see~\cite[Section 1.2]{dg-supercritical-lfpp}). 
Furthermore,~\cite[Theorem 1(a)]{beer-usc} shows that this metric can be taken to be separable. 

\begin{defn} \label{def-metric-properties}
Let $(X,d)$ be a metric space, with $d$ allowed to take on infinite values. 
\begin{itemize}
\item
For a curve $P : [a,b] \rta X$, the \emph{$d$-length} of $P$ is defined by 
\eqbn
\op{len}(P;d) :=  \sup_{T} \sum_{i=1}^{\# T} d(P(t_i) , P(t_{i-1})) 
\eqen
where the supremum is over all partitions $T : a= t_0 < \dots < t_{\# T} = b$ of $[a,b]$. Note that the $d$-length of a curve may be infinite.
\item
We say that $(X,d)$ is a \emph{length space} if for each $x,y\in X$ and each $\ep > 0$, there exists a curve of $d$-length at most $d(x,y) + \ep$ from $x$ to $y$. 
A curve from $x$ to $y$ of $d$-length \emph{exactly} $d(x,y)$, if such a curve exists, is called a \emph{geodesic}. 
\item
For $Y\subset X$, the \emph{internal metric of $d$ on $Y$} is defined by
\eqb \label{eqn-internal-def}
d(x,y ; Y)  := \inf_{P \subset Y} \op{len}\left(P ; d \right) ,\quad \forall x,y\in Y 
\eqe 
where the infimum is over all paths $P$ in $Y$ from $x$ to $y$. 
Note that $d(\cdot,\cdot ; Y)$ is a metric on $Y$, except that it is allowed to take infinite values.  
\item
If $X \subset \BB C$, we say that $d$ is a \emph{lower semicontinuous metric} if the function $(x,y) \rta d(x,y)$ is lower semicontinuous w.r.t.\ the Euclidean topology.  
We equip the set of lower semicontinuous metrics on $X$ with the topology on lower semicontinuous functions on $X \times X$, as in Definition~\ref{def-lsc}, and the associated Borel $\sigma$-algebra.
\end{itemize}
\end{defn}

\begin{defn} \label{def-across-around}
Let $d$ be a length metric on $\BB C$. 
For a region $A\subset\BB C$ with the topology of a Euclidean annulus, we write $d(\text{across $A$})$ for the $d$-distances between the inner and outer boundaries of $A$ and $d(\text{around $A$})$ for the infimum of the $d$-lengths of paths in $A$ which disconnect the inner and outer boundaries of $A$.  
\end{defn}

Distances around and across Euclidean annuli play a similar role to ``hard crossings" and ``easy crossings" of $2\times 1$ rectangles in percolation theory. 
One can get lower bounds for the $d$-lengths of paths in terms of the $d$-distances across the annuli that is crosses. On the other hand, one can ``string together" paths around Euclidean annuli to produce longer paths.

The following is (almost) a re-statement of~\cite[Definition 1.6]{pfeffer-supercritical-lqg}.

\begin{defn}[Weak LQG metric]
\label{def-metric}
Let $\mcl D'$ be the space of distributions (generalized functions) on $\BB C$, equipped with the usual weak topology.   
For $\xi > 0$, \emph{weak LQG metric with parameter $\xi$} is a measurable functions $h\mapsto D_h$ from $\mcl D'$ to the space of lower semicontinuous metrics on $\BB C$ with the following properties. Let $h$ be a \emph{GFF plus a continuous function} on $\BB C$: i.e., $h$ is a random distribution on $\BB C$ which can be coupled with a random continuous function $f$ in such a way that $h-f$ has the law of the whole-plane GFF. Then the associated metric $D_h$ satisfies the following axioms. 
\begin{enumerate}[I.]
\item \textbf{Length space.} Almost surely, $(\BB C,D_h)$ is a length space. \label{item-metric-length} 
\item \textbf{Locality.} Let $U\subset\BB C$ be a deterministic open set. 
The $D_h$-internal metric $D_h(\cdot,\cdot ; U)$ is a.s.\ given by a measurable function of $h|_U$.  \label{item-metric-local}
\item \textbf{Weyl scaling.} For a continuous function $f : \BB C \rta \BB R$, define
\eqb \label{eqn-metric-f}
(e^{\xi f} \cdot D_h) (z,w) := \inf_{P : z\rta w} \int_0^{\op{len}(P ; D_h)} e^{\xi f(P(t))} \,dt , \quad \forall z,w\in \BB C ,
\eqe 
where the infimum is over all $D_h$-continuous paths from $z$ to $w$ in $\BB C$ parametrized by $D_h$-length.
Then a.s.\ $ e^{\xi f} \cdot D_h = D_{h+f}$ for every continuous function $f: \BB C \rta \BB R$. \label{item-metric-f}
\item \textbf{Translation invariance.} For each deterministic point $z \in \BB C$, a.s.\ $D_{h(\cdot + z)} = D_h(\cdot+ z , \cdot+z)$.  \label{item-metric-translate}
\item \textbf{Tightness across scales.} Suppose that $h$ is a whole-plane GFF and let $\{h_r(z)\}_{r > 0, z\in\BB C}$ be its circle average process. 
There are constants $\{\frk c_r\}_{r>0}$ such that the following is true.
Let $A\subset \BB C$ be a deterministic Euclidean annulus.
In the notation of Definition~\ref{def-across-around}, the random variables  \label{item-metric-coord}
\eqbn
\frk c_r^{-1} e^{-\xi h_r(0)} D_h\left( \text{across $r A$} \right) \quad \text{and} \quad
\frk c_r^{-1} e^{-\xi h_r(0)} D_h\left( \text{around $r A$} \right)
\eqen
and the reciporicals of these random variables for $r>0$ are tight. 
\end{enumerate}
\end{defn}

For $\xi < \xi_{\op{crit}}$, it is shown in~\cite{gm-uniqueness} (as a consequence of the uniqueness of weak LQG metrics) that every weak LQG metric satisfies an exact spatial scaling property which is stronger than Axiom~\ref{item-metric-coord}. More precisely, for every $r > 0$, a.s.
\eqb \label{eqn-lqg-coord}
D_{h(r\cdot) + Q\log r}(z,w) = D_h(r z , r w) ,\quad \forall z,w\in \BB C .
\eqe 
A metric which satisfies Axioms~\ref{item-metric-length} through~\ref{item-metric-translate} together with~\eqref{eqn-lqg-coord} is called a \emph{strong LQG metric}. 

In the case when $\xi \geq \xi_{\op{crit}}$, we do not yet know that every weak LQG metric satisfies~\eqref{eqn-lqg-coord}.  
We can think of Axiom~\ref{item-metric-coord} as a substitute for~\eqref{eqn-lqg-coord}. 
It allows us to get estimates which are uniform across different Euclidean scales, even though we do not have an exact scale invariance property. 
See~\cite{lqg-metric-estimates,gm-uniqueness,pfeffer-supercritical-lqg} for further discussion of this point.

Definition~\ref{def-metric} is slightly more general than the definition of a weak LQG metric used in other works~\cite{lqg-metric-estimates,gm-uniqueness,pfeffer-supercritical-lqg}. The reason is that the aforementioned papers require a (rather weak) a priori bound for the scaling constants $\frk c_r$ from Axiom~\ref{item-metric-coord}. It follows from Theorem~\ref{thm-c_r} below that Definition~\ref{def-metric} is equivalent to the definition in~\cite{pfeffer-supercritical-lqg}, so the a priori bounds for $\frk c_r$ are unnecessary. We emphasize that our proof of Theorem~\ref{thm-c_r} does not use the results of~\cite{pfeffer-supercritical-lqg}.

\begin{remark} \label{remark-constant-multiple}
The scaling constants $\{\frk c_r\}_{r > 0}$ from Axiom~\ref{item-metric-coord} are not uniquely determined by the law of $D_h$. If $\{\wt{\frk c}_r\}_{r>0}$ is another sequence of non-negative real numbers and there is a constant $C >0$ such that $C^{-1} \frk c_r \leq \wt{\frk c}_r \leq C \frk c_r$ for each $r >0$, then Axiom~\ref{item-metric-coord} holds with $\wt{\frk c}_r$ instead of $\frk c_r$. 
Conversely, if $m_r$ denotes the median of the random variable $e^{-\xi h_r(0)} D_h(\text{around $\BB A_{r,2r}(0)$})$, then Axiom~\ref{item-metric-coord} implies that there is a constant $C > 0$, depending only on the law of $D_h$, such that $C^{-1} \frk c_r \leq m_r \leq C \frk c_r$.
In particular, any two possible choices for $\{\frk c_r\}_{r>0}$ are comparable up to constant multiplicative factors.
\end{remark}

The following theorem is proven as~\cite[Theorem 1.7]{pfeffer-supercritical-lqg}, building on the tightness result from~\cite{dg-supercritical-lfpp}.

\begin{thm}[\!\!\cite{pfeffer-supercritical-lqg}] \label{thm-lfpp-axioms}
Let $\xi > 0$. For every sequence of $\ep$'s tending to zero, there is a weak LQG metric $D$ with parameter $\xi$ and a subsequence $\{\ep_n\}_{n\in\BB N}$ for which the following is true. Let $h$ be a whole-plane GFF, or more generally a whole-plane GFF plus a bounded continuous function. Then the re-scaled LFPP metrics $\frk a_{\ep_n}^{-1} D_h^{\ep_n}$, as defined in~\eqref{eqn-gff-lfpp} and~\eqref{eqn-gff-constant}, converge in probability to $D_h$ w.r.t.\ the metric on lower semicontinuous functions on $\BB C\times \BB C$. 
\end{thm}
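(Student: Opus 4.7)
The plan is to combine tightness of $\{\frk a_\ep^{-1} D_h^\ep\}_{\ep>0}$ in the topology on lower semicontinuous functions together with the fact that every subsequential limit is a.s.\ a length metric (both established in~\cite{dg-supercritical-lfpp}), and then verify each axiom of Definition~\ref{def-metric} for the limit. Since that topology is metrizable and separable, Prokhorov's theorem applied to the starting sequence yields a subsequence $\ep_n \rta 0$ along which $\frk a_{\ep_n}^{-1} D_h^{\ep_n}$ converges in law to a random lower semicontinuous length metric $D_h$; this already gives Axiom~\ref{item-metric-length}. To upgrade weak convergence to convergence in probability, I would apply a Skorokhod coupling on a common probability space, identify $D_h$ as a measurable function of the coupled $h$ (using Axiom~\ref{item-metric-local} verified below), and conclude that the coupled $D_h$ must agree with the one built from the original $h$.

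The ``local'' axioms transfer from the $\ep$-level by direct continuity arguments. For Axiom~\ref{item-metric-local}: $h_\ep^*|_U$ is a Borel function of $h$ on the $O(\ep)$-neighborhood of $U$, hence $D_h^\ep(\cdot,\cdot;U)$ is measurable in that data, and since these neighborhoods shrink to $U$ as $\ep\rta 0$, and lower semicontinuous convergence plays well with internal metrics, the limit is $h|_U$-measurable. For Axiom~\ref{item-metric-f}: $(h+f)_\ep^* = h_\ep^* + f_\ep^*$ with $f_\ep^*\rta f$ locally uniformly when $f$ is continuous, so $D_{h+f}^\ep$ and $e^{\xi f}\cdot D_h^\ep$ differ by a negligible multiplicative factor; continuity of the operation $e^{\xi f}\cdot$ on lower-semicontinuous metrics then passes this identity to the limit. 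Axiom~\ref{item-metric-translate} is immediate from the translation invariance of the whole-plane GFF and of the construction~\eqref{eqn-gff-lfpp}. The extension from pure GFF to GFF plus a bounded continuous function $\phi$ reduces to the pure case by pulling $\phi$ into a Weyl rescaling.

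The heart of the argument is Axiom~\ref{item-metric-coord}. The whole-plane GFF satisfies $\tilde h := h(r\cdot) - h_r(0) \eqD h$, and an exact change of variables in the heat kernel gives the identity $h_\ep^*(r\cdot) = \tilde h_{\ep/r}^*(\cdot) + h_r(0)$. Substituting into~\eqref{eqn-gff-lfpp} yields
\[
\frk a_\ep^{-1} D_h^\ep(rz, rw) = \bigl( r \frk a_{\ep/r}/\frk a_\ep \bigr)\, e^{\xi h_r(0)} \, \frk a_{\ep/r}^{-1} D_{\tilde h}^{\ep/r}(z,w).
\]
By~\eqref{eqn-Q-def} the deterministic prefactor equals $r^{\xi Q + o_\ep(1)}$. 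Extracting, via a diagonal argument over a countable dense set of $r$, a further subsequence along which these prefactors converge to a deterministic limit $\frk c_r$, tightness of $\frk a_{\ep/r}^{-1} D_{\tilde h}^{\ep/r}$ across and around a fixed annulus $A$ (inherited from the full-family tightness of~\cite{dg-supercritical-lfpp}) transfers via the identity above to tightness of $\frk c_r^{-1} e^{-\xi h_r(0)} \frk a_\ep^{-1} D_h^\ep$ across and around $rA$, which passes to the limit to give Axiom~\ref{item-metric-coord}. The main obstacle is exactly this joint choice of subsequence and scaling constants: one must arrange a single subsequence along which the limit metric exists and the constants $\frk c_r$ behave consistently over all $r>0$, which requires some care with the diagonal extraction and with checking that the prefactor $r\frk a_{\ep/r}/\frk a_\ep$ is sufficiently well-behaved (e.g.\ bounded on compact sets of $r$, which follows from~\eqref{eqn-Q-def} together with mild regularity of $\frk a_\ep$).
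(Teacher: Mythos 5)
The paper does not prove Theorem~\ref{thm-lfpp-axioms}; it is cited verbatim as~\cite[Theorem 1.7]{pfeffer-supercritical-lqg}, itself building on the tightness results of~\cite{dg-supercritical-lfpp}. So there is no in-paper proof to compare against, and one can only assess the internal soundness of your outline. You do correctly isolate the exact scaling identity (it reappears in the paper as~\eqref{eqn-use-lfpp-scale}), and it is indeed the workhorse behind Axiom~\ref{item-metric-coord}. The rest of the sketch, however, compresses into a sentence or two several steps that are the actual content of~\cite{pfeffer-supercritical-lqg}: a weak LQG metric is by \emph{definition} a measurable map $h\mapsto D_h$, so after Prokhorov one must genuinely construct such a map from a subsequential limit in law, and Axiom~\ref{item-metric-local} (which only controls internal metrics on fixed open sets) does not hand you that measurability for free; ``apply a Skorokhod coupling'' is not a proof. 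Similarly, Weyl scaling for the \emph{limit} is not a continuity statement: in the lower-semicontinuous topology neither $D$-lengths of paths nor distances across/around annuli are continuous functionals of $D$, so the identity $e^{\xi f}\cdot D^\ep_h = D^\ep_{h+f}$ (which itself holds only up to a vanishing multiplicative error) does not pass to the limit by abstract nonsense.

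The most concrete gap is the claim that ``by~\eqref{eqn-Q-def} the deterministic prefactor equals $r^{\xi Q + o_\ep(1)}$.'' Writing $\frk a_\ep = \ep^{1-\xi Q + \delta(\ep)}$ with $\delta(\ep)\rta 0$, one computes
$r\,\frk a_{\ep/r}/\frk a_\ep = r^{\xi Q - \delta(\ep/r)}\,\ep^{\delta(\ep/r) - \delta(\ep)}$,
and the second factor need not tend to $1$ (nor even stay bounded) as $\ep\rta 0$: the exponents $\delta(\ep/r)-\delta(\ep)$ are $o_\ep(1)$ but are multiplied by $\log\ep$. So~\eqref{eqn-Q-def} alone does not control the prefactors, and the ``further diagonal subsequence along which they converge'' may not exist. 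In fact, obtaining quantitative control on precisely this ratio is a main contribution of the present paper (Lemma~\ref{lem-ratio-uniform}, Theorem~\ref{thm-a_eps}), and those results are \emph{derived from} Theorem~\ref{thm-lfpp-axioms} rather than assumed in its proof --- so invoking them here would be circular. Relatedly, your subsequence extraction for $\frk c_r$ is unnecessary: as Remark~\ref{remark-constant-multiple} notes, once the limit $D_h$ is fixed one can simply take $\frk c_r$ to be the median of $e^{-\xi h_r(0)} D_h(\text{around } \BB A_{r,2r}(0))$; Axiom~\ref{item-metric-coord} asks only that \emph{some} choice of constants makes the renormalized annulus distances tight uniformly in $r$, and the nontrivial content is that tightness, not identifying $\frk c_r$ with a limit of prefactors.
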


Theorem~\ref{thm-lfpp-axioms} implies in particular that for each $\xi > 0$, there exists a weak LQG metric with parameter $\xi$. 
In the case when $\xi < \xi_{\op{crit}}$, the convergence occurs with respect to the topology of uniform convergence on compact subsets of $\BB C\times\BB C$ and the subsequential limit has been shown to be unique~\cite{dddf-lfpp,gm-uniqueness}.

\subsection{Main results}
\label{sec-results}

Throughout this section, we fix $\xi  > 0$, we let $h$ be the whole-plane GFF, and we let $D_h$ be a weak LQG metric with parameter $\xi$. 
We recall the re-scaled LFPP metrics $\frk a_\ep^{-1} D_h^\ep$ with parameter $\xi$ from~\eqref{eqn-gff-lfpp} and~\eqref{eqn-gff-constant}. 

Our main result says that, roughly speaking, $D_h$ is bi-Lipschitz equivalent to the re-scaled LFPP metrics $\frk a_\ep^{-1} D_h^\ep$. 
We cannot say that these metrics are literally bi-Lipschitz equivalent since $D_h$ has a fractal structure whereas $\frk a_\ep^{-1} D_h^\ep$ is smooth. So, it is not possible to get an up-to-constants comparison of these metrics at Euclidean scales smaller than $\ep$. 
We get around this problem by looking at distances between Euclidean balls of radius slightly larger than $\ep$. 

\newcommand{\Cmain}{{C_0}}

\begin{thm} \label{thm-lqg-lfpp}
For each $\zeta \in (0,1) $, there exists and $\Cmain > 0$, depending only on $\zeta$ and the law of $D_h$, such that the following is true. 
Let $U\subset \BB C$ be a deterministic, connected, bounded open set and recall the notation for internal metrics on $U$ from Definition~\ref{def-metric-properties}.  
With probability tending to 1 as $\ep\rta 0$, 
\eqb \label{eqn-compare-upper'}
\frk a_\ep^{-1} D_h^\ep\left( B_{ \ep^{1-\zeta}}(z) , B_{ \ep^{1-\zeta}}(w) ; B_{ \ep^{1-\zeta}}(U) \right) 
\leq \Cmain D_h(z,w ; U)  ,\quad \forall z,w\in U
\eqe 
and
\eqb \label{eqn-compare-lower'}
D_h \left( B_{ \ep^{1-\zeta}}(z) , B_{ \ep^{1-\zeta}}(w) ; B_{ \ep^{1-\zeta}}(U) \right) 
\leq \Cmain \frk a_\ep^{-1} D_h^\ep(z,w ; U) ,\quad \forall z,w \in U . 
\eqe 
\end{thm}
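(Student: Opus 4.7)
My strategy is to combine the subsequential convergence of Theorem~\ref{thm-lfpp-axioms} with the exact scaling of LFPP under $h \mapsto h(\lambda \cdot)$ to upgrade a comparison along a subsequence to one for all sufficiently small $\ep$.

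First I would extract a subsequence $\{\ep_n\}$ along which $\frk a_{\ep_n}^{-1} D_h^{\ep_n}$ converges to $D_h$ in probability with respect to the topology on lower semicontinuous functions. For $\ep=\ep_n$ the bi-Lipschitz comparison is essentially built into the definition: properties (i) and (ii) of Definition~\ref{def-lsc} respectively supply, for each $(z,w)\in U\times U$, nearby pairs $(z_n,w_n)$ witnessing one-sided comparability of $D_h^{\ep_n}(z_n,w_n)$ and $D_h(z,w)$. The $\ep^{1-\zeta}$-thickening in~\eqref{eqn-compare-upper'}--\eqref{eqn-compare-lower'} is precisely what is needed to absorb the mismatch between the smooth approximation $D_h^\ep$ and the fractal limit $D_h$ at scales below $\ep$.

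For a general $\ep$, I would pick $\ep_n$ with $\lambda := \ep_n/\ep$ in a fixed bounded range $[1,K]$, and compare $D_h^\ep$ to $D_h^{\ep_n}$ via the deterministic scaling identity
\eqbn
D_{h(\lambda \cdot)}^\ep(z,w) = \lambda^{-1} D_h^{\lambda \ep}(\lambda z, \lambda w) ,
\eqen
which follows from a change of variables in~\eqref{eqn-gff-lfpp} using the relation $h(\lambda\cdot)_\ep^*(z) = h_{\lambda\ep}^*(\lambda z)$. Combined with the whole-plane GFF scale invariance $h(\lambda\cdot) - h_\lambda(0) \eqD h$, this yields the distributional identity $D_h^{\lambda\ep}(\lambda\cdot,\lambda\cdot) \eqD \lambda e^{\xi h_\lambda(0)} D_h^\ep$, in which the factor $e^{\xi h_\lambda(0)}$ is tight since $\lambda\in[1,K]$. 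A parallel application of Axioms~\ref{item-metric-f} and~\ref{item-metric-coord} to $D_h$ compares $D_h$-distances in $U$ with those in $\lambda U$ up to a similarly tight multiplicative correction. Chaining the Step~1 comparison at $\ep_n$ with these two scaling relations then delivers the desired bounds at scale $\ep$.

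\textbf{Main obstacle.} The most delicate step is turning the ingredients above into a single event of probability tending to $1$ on which the bi-Lipschitz comparison holds \emph{simultaneously} for all pairs $(z,w)\in U\times U$, not merely for each fixed pair. I would handle this through a covering argument based on Euclidean annuli at dyadic scales, in the notation of Definition~\ref{def-across-around}: Axiom~\ref{item-metric-coord} supplies uniform tightness of $D_h$- and $D_h^\ep$-distances around and across each annulus after rescaling, and these can be combined --- short paths around annuli strung together, and crossings lower-bounding any path that must traverse them --- to compare arbitrary geodesics of the two metrics. The locality axiom (Axiom~\ref{item-metric-local}) is what lets the argument be carried out inside the neighborhood $B_{\ep^{1-\zeta}}(U)$ appearing in the internal-metric formulation of the theorem.
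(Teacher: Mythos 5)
Your proposal has two genuine gaps, and the first one is fatal to the overall strategy.

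\textbf{Step 1 does not hold.} You assert that for $\ep = \ep_n$ along the converging subsequence, the bi-Lipschitz comparison ``is essentially built into the definition'' of convergence in the topology on lower semicontinuous functions, with the $\ep^{1-\zeta}$-thickening absorbing the mismatch. This is false, and the paper explicitly warns against exactly this reasoning just after the theorem statement: convergence of $\frk a_{\ep_n}^{-1}D_h^{\ep_n}$ to $D_h$ (even \emph{uniform} convergence on compacts, as in the subcritical case) gives that $|\frk a_{\ep_n}^{-1}D_h^{\ep_n}(z_n,w_n) - D_h(z,w)| = o(1)$, which is useless when $D_h(z,w)$ itself is of the small order $\ep_n^{(1-\zeta)\cdot\text{something}}$. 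Property (ii) of Definition~\ref{def-lsc} produces a sequence $(z_n,w_n)\to(z,w)$ with no quantitative control over $|z_n-z|$ relative to $\ep_n$, and property (i) gives a one-sided inequality only in the limit, with no rate. Nothing in the lsc convergence converts an additive $o(1)$ error into a multiplicative constant at near-microscopic scales. This is precisely why the theorem is nontrivial even in the subcritical regime.

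\textbf{Step 3 has a separate gap.} You want to choose $\ep_n$ with $\lambda=\ep_n/\ep\in[1,K]$, but the subsequence in Theorem~\ref{thm-lfpp-axioms} is produced by a compactness argument and may be arbitrarily sparse (e.g.\ $\ep_n = 2^{-2^n}$), so for a typical small $\ep$ there may be no admissible $\ep_n$. Letting $\lambda$ grow breaks the tightness of $e^{\xi h_\lambda(0)}$ and misaligns the $\ep^{1-\zeta}$-scale on the two sides. One of the whole points of the theorem is to get estimates off the subsequence, and your strategy implicitly assumes the subsequence is already dense.

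The paper's proof takes a fundamentally different route and never invokes the convergence $\frk a_{\ep_n}^{-1}D_h^{\ep_n}\to D_h$ at all. It only uses that $D_h$ is a weak LQG metric. The key steps are: (a) use tightness across scales for $D_h$ and tightness of LFPP crossing distances, plus the exact LFPP scaling identity, to show that at each annulus $\BB A_{r,2r}(z)$ the distances across/around for $D_h$ and $\frk a_\ep^{-1}\wh D_h^\ep$ are each comparable to $\frk c_r e^{\xi h_r(z)}$ and $\frac{r\frk a_{\ep/r}}{\frk a_\ep}e^{\xi h_r(z)}$ respectively, with high probability (Lemma~\ref{lem-compare-event-prob}); (b) run a multi-scale argument across a logarithmic number of scales $r\in\mcl R^\ep\subset[\ep,\ep^{1-\zeta}]$ using the near-independence of the GFF on disjoint annuli (Lemma~\ref{lem-annulus-iterate}), string together short paths around annuli intersecting a $D_h$-geodesic, and derive the comparison~\eqref{eqn-compare-upper}--\eqref{eqn-compare-lower} up to a factor involving $\max_r\frac{r\frk a_{\ep/r}}{\frk c_r\frk a_\ep}$ over a chosen subset of scales (Lemma~\ref{lem-compare-subset}); and (c) bootstrap, using the a priori tightness of $\frk a_\ep^{-1}\wh D_h^\ep(z,w)$ and $D_h(z,w)$ at constant order $|z-w|$, to show that a definite proportion of scales $r\in\mcl R^\ep$ have $\frac{r\frk a_{\ep/r}}{\frk c_r\frk a_\ep}$ of constant order (Lemma~\ref{lem-good-ratios}), then plug that good subset back in. Your covering idea in the ``Main obstacle'' paragraph is in the right spirit for step (b), but without the annulus-event machinery and the bootstrap on the scaling-ratio, the argument cannot close.
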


Theorem~\ref{thm-lqg-lfpp} is a new result even in the subcritical case, where we already know that $\frk a_\ep^{-1} D_h^\ep \rta D_h$ in probability with respect to the topology of uniform convergence on compact subsets of $\BB C\times\BB C$. Indeed, the convergence $\frk a_\ep^{-1} D_h^\ep \rta D_h$ only allows us to estimate the ratio of $\frk a_\ep^{-1} D_h^\ep(z,w)$ and $D_h(z,w)$ when $|z-w| $ is of constant order, whereas Theorem~\ref{thm-lqg-lfpp} gives a non-trivial estimate when $|z-w|$ is as small as $3\ep^{1-\zeta}$. 

In the critical and supercritical cases, we as of yet only have subsequential limits for $\frk a_\ep^{-1} D_h^\ep$. 
In these cases, an important consequence of Theorem~\ref{thm-lqg-lfpp} is the following. 
If $\ep_k \rta 0$ is a sequence along which $\frk a_{\ep_k}^{-1} D_h^{\ep_k} \rta D_h$ in probability, then $D_h$ is a weak LQG metric~\cite{pfeffer-supercritical-lqg}. Therefore, Theorem~\ref{thm-lqg-lfpp} gives an up-to-constants comparison between $\frk a_\ep^{-1} D_h^\ep$ and $D_h$ even if $\ep$ is not part of the sequence $\{\ep_k\}$. 

We now record several estimates which are consequences of Theorem~\ref{thm-lqg-lfpp}. 
Our first estimate gives up-to-constants bounds for the scaling constants in Axiom~\ref{item-metric-coord}.
 
\newcommand{\Clqg}{{C_1}}

\begin{thm} \label{thm-c_r}
Let $\{\frk c_r\}_{r > 0}$ be the scaling constants from Axiom~\ref{item-metric-coord}. 
There is a constant $\Clqg > 0$, depending only on the law of $D_h$, such that for each $r  > 0$, 
\eqb \label{eqn-c_r}
\Clqg^{-1} r^{\xi Q} \leq \frk c_r \leq \Clqg r^{\xi Q}.
\eqe 
\end{thm}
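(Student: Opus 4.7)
My strategy is to reduce the theorem to an estimate on a single median and then to extract that estimate from Theorem~\ref{thm-lqg-lfpp} combined with the exact LFPP scaling and the polylogarithmic bounds on $\{\frk a_\ep\}$ coming from Theorem~\ref{thm-a_eps}.

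\textbf{Reduction to a median.} By Remark~\ref{remark-constant-multiple}, the constants $\frk c_r$ are determined, up to a uniform multiplicative constant depending only on the law of $D_h$, by the medians
\[
m_r := \op{med}\!\bigl(e^{-\xi h_r(0)}\, D_h(\text{around } \BB A_{r,2r}(0))\bigr).
\]
So it is enough to prove $m_r \asymp r^{\xi Q}$ with a constant uniform in $r>0$.

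\textbf{Master relation.} I would next establish the relation
\[
m_r \asymp \frk a_\ep^{-1}\cdot r\cdot \frk a_{\ep/r},
\]
uniformly in $r>0$ and all sufficiently small $\ep$. Three ingredients go into this: (i) an ``around the annulus'' version of Theorem~\ref{thm-lqg-lfpp}, obtained by chaining the ball-to-ball comparison along finitely many points of a near-optimal loop, using that $D_h$ is a length space (Axiom~\ref{item-metric-length}); (ii) the exact LFPP scaling identity $D_h^\ep(rz,rw) = r\, e^{\xi h_r(0)}\, D_{h^r}^{\ep/r}(z,w)$, where $h^r(\cdot) := h(r\cdot) - h_r(0) \eqD h$ is independent of $h_r(0)$, which rewrites $e^{-\xi h_r(0)} D_h^\ep$ around $\BB A_{r,2r}(0)$ as $r$ times $D_{h^r}^{\ep/r}$ around $\BB A_{1,2}(0)$; and (iii) the standard LFPP crossing-type fact that the median of $D_{h^r}^{\ep/r}$ around $\BB A_{1,2}(0)$ is comparable to $\frk a_{\ep/r}$. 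Applying (i) once more at unit scale with parameter $\ep/r$ matches this with the $D_{h^r}$-around-distance at scale~$1$, whose median is a fixed constant $\asymp m_1$; taking medians throughout yields the claimed master relation with constants depending only on $\zeta$ and the law of $D_h$.

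\textbf{Killing the polylog via slow variation and multiplicativity.} Theorem~\ref{thm-a_eps} gives $\frk a_\ep = \ep^{1-\xi Q}\,\psi(\ep)$ with $\psi$ of polylogarithmic growth in $\log\ep^{-1}$, and hence slowly varying. Substituting,
\[
m_r \asymp r^{\xi Q}\cdot \frac{\psi(\ep/r)}{\psi(\ep)}.
\]
For $r$ in any fixed compact subset of $(0,\infty)$, slow variation gives $\psi(\ep/r)/\psi(\ep) \to 1$ uniformly in $r$ as $\ep\to 0$. Since the left-hand side does not depend on $\ep$, this produces $m_r \asymp r^{\xi Q}$ for $r\in[1,2]$, say. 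To extend to all $r>0$, I would feed $r_1 r_2$, $r_1$, $r_2$ into the master relation with a common small $\ep$, obtaining the approximate multiplicativity $m_{r_1 r_2} \asymp m_{r_1} m_{r_2}/m_1$; iterating this dyadically together with the compact-range estimate gives $m_r \asymp r^{\xi Q}$ for all $r>0$.

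\textbf{Main obstacle.} The most delicate ingredient is (i): rigorously upgrading Theorem~\ref{thm-lqg-lfpp} from internal distances between Euclidean balls to an up-to-constants comparison of around-annulus distances, with a constant that stays uniform in the scale~$r$. This requires careful control on near-optimal loops (using the length-space axiom and lower semicontinuity) and on the interaction between the annular geometry and the fattening radius $\ep^{1-\zeta}$. A secondary subtlety is that the conclusion $m_r \asymp r^{\xi Q}$ is strictly sharper than what the polylogarithmic bounds on $\frk a_\ep$ provide directly; it is essential that the polylog corrections enter only through the slowly varying ratio $\psi(\ep/r)/\psi(\ep)$, whose fluctuations can then be absorbed via the compact-range-plus-multiplicativity step.
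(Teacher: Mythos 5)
Your reduction to the medians $m_r$ via Remark~\ref{remark-constant-multiple} and your ``master relation'' $m_r \asymp r\,\frk a_{\ep/r}/\frk a_\ep$ are both sound, and they match the content of the paper's Lemma~\ref{lem-ratio-uniform}. However, the way you convert that relation into the uniform two-sided bound $m_r \asymp r^{\xi Q}$ has a genuine gap, in two places.

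First, you invoke Theorem~\ref{thm-a_eps} and claim that the polylogarithmic bound $\frk a_\ep = \ep^{1-\xi Q}\psi(\ep)$ with $\psi(\ep)\in[\Clfpp^{-1}(\log\ep^{-1})^{-b},\,\Clfpp(\log\ep^{-1})^{b}]$ makes $\psi$ slowly varying. This does not follow: a function bounded between $(\log\ep^{-1})^{-b}$ and $(\log\ep^{-1})^{b}$ can have $\psi(\ep/r)/\psi(\ep)$ oscillating as badly as $(\log\ep^{-1})^{\pm 2b}$, which diverges as $\ep\to0$ and certainly does not tend to $1$. Slow variation of $\phi(\ep)=\frk a_\ep/\ep^{1-\xi Q}$ is known only in the subcritical case via the uniqueness theorem of~\cite{gm-uniqueness}, which this paper is explicitly trying not to rely on. Worse, the paper's proof of Theorem~\ref{thm-a_eps} itself uses Theorem~\ref{thm-c_r} (via Lemma~\ref{lem-ratio-uniform}), so your route is circular within the paper's logical structure.

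Second, even granting a compact-range estimate $m_r\asymp r^{\xi Q}$ for $r\in[1,2]$ and the approximate multiplicativity $m_{r_1 r_2}\asymp m_{r_1}m_{r_2}/m_1$, ``iterating dyadically'' does not yield a uniform constant: if the multiplicative relation holds with constant $C_1$, then $n$ iterations produce a factor $C_1^{\,n}$, which diverges. The paper's key device for avoiding this blowup is the subadditive rate lemma (Lemma~\ref{lem-subadd}, a Fekete-type bound): because the two-sided multiplicativity of $\frk c_r$ gives $|x_{n+m}-x_n-x_m|\le c$ for $x_n=\log\frk c_{2^{-n}}$, one in fact gets $|x_n-n\alpha|\le c$ with a \emph{uniform} additive error, hence $\frk c_{2^{-n}}\asymp 2^{-\alpha n}$ with a uniform constant. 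The exponent $\alpha$ is then identified as $\xi Q$ from the relation $r\frk a_{\ep/r}/\frk a_\ep = r^{\xi Q+o_r(1)}$ combined with Lemma~\ref{lem-ratio-uniform}, and general $r$ is handled with the a priori comparison of Lemma~\ref{lem-constant-a-priori}. The subadditive rate lemma is the ingredient your proposal is missing; without it, neither your slow-variation step nor your dyadic extension closes the argument.
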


Due to Remark~\ref{remark-constant-multiple}, Theorem~\ref{thm-c_r} is equivalent to the statement that one can take $\frk c_r = r^{\xi Q}$ in Axiom~\ref{item-metric-coord}.  
In the subcritical case $\xi < \xi_{\op{crit}}$, the exact scaling relation~\eqref{eqn-lqg-coord}, which was proven in~\cite{gm-uniqueness}, already implies that $\frk c_r = r^{\xi Q}$. However, this fact was deduced as a consequence of the uniqueness of weak LQG metrics, whereas the present paper gives a much simpler and more direct proof.

In the critical and supercritical cases, Theorem~\ref{thm-c_r} is a new result. This result will be used in~\cite{dg-critical-lqg} to show that every subsequential limit of critical LFPP induces the Euclidean topology. We also expect that it will help to simplify the proof of the uniqueness of weak LQG metrics in the critical and supercritical cases. 

Another consequence of Theorem~\ref{thm-lqg-lfpp} is the fact that any two weak LQG metrics are bi-Lipschitz equivalent.

\newcommand{\Cbilip}{{C_2}}

\begin{thm} \label{thm-bilip}
Let $D_h$ and $\wt D_h$ be two weak LQG metrics with parameter $\xi > 0$. There is a deterministic constant $\Cbilip > 1$ such that a.s.\ 
\eqb \label{eqn-bilip}
\Cbilip^{-1} D_h(z,w) \leq \wt D_h(z,w) \leq \Cbilip D_h(z,w) ,\quad\forall z,w\in\BB C .
\eqe 
\end{thm}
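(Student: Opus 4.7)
The strategy is to use the LFPP metric $\frk a_\ep^{-1} D_h^\ep$ as a common reference. Since Theorem~\ref{thm-lqg-lfpp} compares both $D_h$ and $\wt D_h$ to the same LFPP metric up to multiplicative constants, we should be able to transfer this into a direct comparison of the two. The execution goes through a subsequential limit of LFPP. First, invoke Theorem~\ref{thm-lfpp-axioms} to extract a subsequence $\ep_k \to 0$ along which $\frk a_{\ep_k}^{-1} D_h^{\ep_k}$ converges in probability, and then pass to a further subsequence along which convergence holds a.s.\ in the lower semicontinuous topology, to some weak LQG metric $D_h^*$. Simultaneously arrange that the conclusions of Theorem~\ref{thm-lqg-lfpp} applied separately to $D_h$ and $\wt D_h$ hold a.s.\ for all $k$ sufficiently large and for an increasing exhaustion $U_n \uparrow \BB C$ by bounded open sets.

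Fix $\zeta > 0$ small and put $r_k = \ep_k^{1-\zeta}$. I first establish the upper bounds $D_h^* \leq \Cmain D_h$ and $D_h^* \leq \Cmain \wt D_h$. Applying~\eqref{eqn-compare-upper'} to $D_h$ on $U = U_n$, I choose $z_k \in B_{r_k}(z)$ and $w_k \in B_{r_k}(w)$ that nearly attain the ball-to-ball infimum; since the internal LFPP distance on $B_{r_k}(U)$ dominates the global one, this gives $\frk a_{\ep_k}^{-1} D_h^{\ep_k}(z_k, w_k) \leq \Cmain D_h(z, w; U) + o_k(1)$. Because $(z_k, w_k) \to (z, w)$, part (i) of Definition~\ref{def-lsc} together with the convergence $\frk a_{\ep_k}^{-1} D_h^{\ep_k} \to D_h^*$ yields $D_h^*(z, w) \leq \Cmain D_h(z, w; U)$. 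Choosing $U_n$ to eventually contain a near $D_h$-geodesic from $z$ to $w$ gives $D_h^*(z, w) \leq \Cmain D_h(z, w)$; the same argument with $\wt D_h$ in place of $D_h$ yields $D_h^*(z, w) \leq \Cmain \wt D_h(z, w)$.

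For the reverse direction I use~\eqref{eqn-compare-lower'}: for every $z', w' \in U$ one has $D_h(B_{r_k}(z'), B_{r_k}(w'); B_{r_k}(U)) \leq \Cmain \frk a_{\ep_k}^{-1} D_h^{\ep_k}(z', w'; U)$. By part (ii) of Definition~\ref{def-lsc} applied to the convergence $\frk a_{\ep_k}^{-1} D_h^{\ep_k} \to D_h^*$, pick $(z_k', w_k') \to (z, w)$ with $\frk a_{\ep_k}^{-1} D_h^{\ep_k}(z_k', w_k') \to D_h^*(z, w)$, and arrange $U$ large enough that an LFPP near-geodesic between $z_k'$ and $w_k'$ lies inside, so the internal and global LFPP distances differ by $o_k(1)$. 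Apply~\eqref{eqn-compare-lower'} at $(z_k', w_k')$. For non-singular $z$, Axiom~\ref{item-metric-coord} combined with the length-space Axiom~\ref{item-metric-length} forces the $D_h$-diameter of $B_{r_k}(z_k')$ to tend to $0$, so by the triangle inequality the ball-to-ball distance on the left converges up to vanishing error to $D_h(z, w; U_n)$, and then to $D_h(z, w)$ as $n \to \infty$. This yields $D_h(z, w) \leq \Cmain D_h^*(z, w)$, and an identical argument gives $\wt D_h(z, w) \leq \Cmain D_h^*(z, w)$. Singular points contribute vacuously since both sides are infinite there. Chaining the four bounds gives $\wt D_h \leq \Cmain^2 D_h$ and its reverse, so the theorem holds with $\Cbilip = \Cmain^2$.

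The main obstacle is the control of the various limits: the lower semicontinuous topology on $\BB C\times\BB C$ does not a priori give pointwise convergence of $\frk a_{\ep_k}^{-1}D_h^{\ep_k}(z,w;U)$, so one must use Definition~\ref{def-lsc}(ii) combined with a careful choice of an exhaustion $U_n$ tailored to the relevant LFPP and $D_h$ near-geodesics. A second delicate point is showing that $D_h$-diameters of shrinking Euclidean balls vanish at every non-singular point; in the supercritical phase this is nontrivial because singular points are dense, but it can be extracted from the tightness Axiom~\ref{item-metric-coord} together with a standard Borel--Cantelli argument against thick points of the circle-average process.
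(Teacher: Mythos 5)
Your high-level idea---use LFPP as a common reference to compare $D_h$ and $\wt D_h$---is right, and Theorem~\ref{thm-lqg-lfpp} is indeed the engine, but your execution takes a detour that introduces a genuine gap. The paper's proof does not pass through a subsequential limit $D_h^*$ at all: it applies~\eqref{eqn-compare-upper'} to $D_h$ and~\eqref{eqn-compare-lower'} to $\wt D_h$ and chains them at fixed $\ep$, by choosing near-minimizers $z',w'$ of the ball-to-ball LFPP infimum in~\eqref{eqn-compare-upper'} and then plugging those into~\eqref{eqn-compare-lower'} with $U$ slightly enlarged. This produces $\wt D_h(B_{\sim\ep^{1/2}}(z),B_{\sim\ep^{1/2}}(w);B_{\sim\ep^{1/2}}(U))\leq \Cbilip D_h(z,w;U)$ directly for all small $\ep$, with no auxiliary metric. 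Then $\ep\rta 0$ is handled by one application of lower semicontinuity of $\wt D_h$, and $U_n\uparrow\BB C$ together with the fact that $D_h$-bounded sets are Euclidean-bounded finishes the job. In particular the paper never needs to control ball-to-ball $D_h$-distances, and it never invokes Theorem~\ref{thm-lfpp-axioms}.

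The concrete problem in your version is the step proving $D_h\leq \Cmain D_h^*$ (and its analogue for $\wt D_h$). You assert that ``Axiom~\ref{item-metric-coord} combined with the length-space Axiom~\ref{item-metric-length} forces the $D_h$-diameter of $B_{r_k}(z_k')$ to tend to $0$'' at non-singular $z$. This is false when $\xi>\xi_{\op{crit}}$: singular points are dense, so \emph{every} Euclidean ball has infinite $D_h$-diameter, and no Borel--Cantelli argument against thick points can rescue the statement as phrased. What you actually need is a lower bound on the ball-to-ball quantity $D_h(B_{r_k}(z_k'),B_{r_k}(w_k');B_{r_k}(U))$ in terms of $D_h(z,w)$, and the right tool is again lower semicontinuity of $D_h$: pick $(x_k,y_k)$ with $x_k\in B_{r_k}(z_k')$, $y_k\in B_{r_k}(w_k')$ nearly attaining the ball-to-ball infimum, note $(x_k,y_k)\rta(z,w)$, and use $D_h(z,w)\leq\liminf_k D_h(x_k,y_k)$. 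With that repair your argument goes through, but at that point you are using the same mechanism as the paper while also carrying the extra baggage of the subsequential limit $D_h^*$, the upgrade from convergence in probability to a.s.\ convergence, and the matching of internal and global LFPP distances along near-geodesics---none of which the direct chaining requires.
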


Previously, Theorem~\ref{thm-bilip} was established, for all $\xi > 0$, under the additional hypothesis that the constants $\frk c_r$ for the two metrics $D_h$ and $\wt D_h$ are the same (see~\cite[Theorem 1.6]{local-metrics} and~\cite[Lemma 2.23]{pfeffer-supercritical-lqg}). 
In the subcritical case, this was an important input of the proof of uniqueness in~\cite{gm-uniqueness}.

Our last estimate gives bounds for the LFPP scaling constants $\frk a_\ep$. To put our result in context, we note that previous works have shown that $\frk a_\ep = \ep^{1 - \xi Q + o_\ep(1)}$ (see~\cite[Theorem 1.5]{gm-uniqueness} for the subcritical case and~\cite[Proposition 1.9]{dg-supercritical-lfpp} for the supercritical and critical cases). In the subcritical case, the convergence of LFPP implies that $\frk a_\ep = \phi(\ep) \ep^{1-\xi Q}$, where $\phi$ is slowly varying~\cite[Corollary 1.11]{gm-uniqueness}, but one does not get any better bound than $\ep^{1-\xi Q +o_\ep(1)}$ for a fixed value of $\ep$. We improve the $\ep^{o_\ep(1)}$ error to a polylogarithmic error. 

\newcommand{\Clfpp}{{C_3}}

\begin{thm} \label{thm-a_eps}
Let $\{\frk a_\ep\}_{\ep > 0}$ be the LFPP scaling constants from~\eqref{eqn-gff-constant}. 
There are constants $\Clfpp >0$ and $b  > 0$ depending only on $\xi$ such that for each $\ep \in (0,1/2]$,  
\eqb \label{eqn-a_eps}
\Clfpp^{-1} (\log\ep^{-1})^{-b} \ep^{1 - \xi Q} \leq \frk a_\ep \leq \Clfpp (\log\ep^{-1})^b \ep^{1-\xi Q}.  
\eqe
\end{thm}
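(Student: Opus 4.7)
The plan is to derive a self-similarity recursion of the form $\frk a_\ep \asymp r^{1-\xi Q}\,\frk a_{\ep/r}$ for appropriate scales, and then iterate it $O(\log\log\ep^{-1})$ times to transfer bounds at a fixed scale to the target scale; each iteration contributes a single multiplicative constant, producing the polylogarithmic factor in~\eqref{eqn-a_eps}. The starting point is the spatial scaling of LFPP: a direct change of variables in~\eqref{eqn-gff-lfpp} using $p_{\ep^2/2}(\delta u) = \delta^{-2}\,p_{(\ep/\delta)^2/2}(u)$ together with the scale-invariance of the whole-plane GFF yields, for every $\delta > 0$,
\[
D_h^\ep(\delta z,\delta w) \;=\; \delta\,e^{\xi h_\delta(0)}\,D_{h^\delta}^{\ep/\delta}(z,w), \qquad h^\delta := h(\delta\cdot)-h_\delta(0)\eqD h,
\]
with $h^\delta$ independent of $h_\delta(0)$ by the radial decomposition of the whole-plane GFF.

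To establish the recursion for a scale $r \in (0,1)$, I would estimate the LFPP crossing distance of the square $r[0,1]^2$ in two different ways. First, Theorem~\ref{thm-lqg-lfpp} (applied to a bounded open neighborhood of $r[0,1]^2$) combined with the tightness in Axiom~\ref{item-metric-coord} and the identification $\frk c_r\asymp r^{\xi Q}$ from Theorem~\ref{thm-c_r} gives
\[
D_h^\ep(\text{crossing of } r[0,1]^2) \;\asymp\; \frk a_\ep\,r^{\xi Q}\,e^{\xi h_r(0)}
\]
on an event of high probability, with constants uniform in $r$ thanks to the uniformity of the tightness in Axiom~\ref{item-metric-coord}. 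Second, the scaling identity above is the deterministic equality
\[
D_h^\ep(\text{crossing of } r[0,1]^2) \;=\; r\,e^{\xi h_r(0)}\,D_{h^r}^{\ep/r}(\text{crossing of }[0,1]^2).
\]
The random $e^{\xi h_r(0)}$ factors cancel exactly between the two formulas, which is the key algebraic point; on the same high-probability event one is left with $D_{h^r}^{\ep/r}(\text{crossing of }[0,1]^2) \asymp \frk a_\ep\,r^{\xi Q-1}$. Since $h^r \eqD h$ and $\frk a_\ep r^{\xi Q-1}$ is deterministic, taking medians (which is justified because $\asymp$ holds with probability $\geq 3/4$) shows that the median of the left-hand side, namely $\frk a_{\ep/r}$, is comparable to $\frk a_\ep r^{\xi Q-1}$. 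Hence $\frk a_\ep\asymp r^{1-\xi Q}\,\frk a_{\ep/r}$, with constants depending only on $\xi$ and the law of $D_h$.

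To finish, set $\ep_0 := \ep$, $\ep_{i+1} := \ep_i^{1/2}$, and $r_i := \ep_i^{1/2}$, and iterate the recursion until the first $k$ for which $\ep_k$ exceeds a fixed constant $\ep_* \in (0,1)$; since $\ep_k = \ep^{2^{-k}}$, this occurs at $k \asymp \log_2\log_2\ep^{-1}$, and $\ep_k$ remains trapped in a bounded positive interval. Telescoping the recursion gives
\[
\frk a_\ep \;\asymp\; C^{k}\Big(\prod_{i<k}r_i\Big)^{1-\xi Q}\,\frk a_{\ep_k} \;=\; C^k\,\ep^{(1-\xi Q)(1-2^{-k})}\,\frk a_{\ep_k} \;\asymp\; (\log\ep^{-1})^{b}\,\ep^{1-\xi Q},
\]
with $b := \log_2 C$, using that $\ep^{2^{-k}}$ and $\frk a_{\ep_k}$ are each $\asymp 1$. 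The main obstacle is the middle step: Axiom~\ref{item-metric-coord} delivers only tightness (not a deterministic identity) for the rescaled LQG distance, and the LFPP scaling carries an $e^{\xi h_r(0)}$ prefactor whose log-normal distribution spreads as $r \downarrow 0$; the recursion succeeds exclusively because these two random factors coincide and cancel, reducing the statement to the deterministic comparison of $\frk a_\ep$, $r^{1-\xi Q}$ and $\frk a_{\ep/r}$. A secondary technicality is keeping the probability in Theorem~\ref{thm-lqg-lfpp} uniformly close to $1$ across the $O(\log\log\ep^{-1})$ iterations, which can be arranged by choosing $\ep_*$ sufficiently small and, where needed, using the LFPP scaling to reduce each application of Theorem~\ref{thm-lqg-lfpp} to a fixed Euclidean scale.
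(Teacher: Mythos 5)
Your proposal is correct and takes essentially the same approach as the paper. The key recursion $\frk a_{\ep/r} \asymp r^{\xi Q - 1}\,\frk a_\ep$ for $r \in [\ep^{1/2},1]$ (which you derive inline from Theorem~\ref{thm-lqg-lfpp}, Axiom~\ref{item-metric-coord}, Theorem~\ref{thm-c_r}, and the LFPP scaling identity, with the random $e^{\xi h_r(0)}$ prefactor canceling exactly as you emphasize) is precisely what the paper obtains by combining Lemma~\ref{lem-ratio-uniform} with Theorem~\ref{thm-c_r}, and your double-exponential iteration $\ep_k = \ep^{2^{-k}}$ with telescoping matches the paper's iteration over scales $2^{-2^k}$, in both cases yielding the polylogarithmic factor from accumulating one multiplicative constant per step over $\asymp \log\log\ep^{-1}$ steps.
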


We expect that $\frk a_\ep = (c+o_\ep(1)) \ep^{1-\xi Q}$ for some $c>0$, but the techniques of the present paper are not strong enough to give this (see also Remark~\ref{remark-polylog}).

\section{Preliminaries}

\subsection{Notational conventions}
\label{sec-notation}

\noindent
We write $\BB N = \{1,2,3,\dots\}$ and $\BB N_0 = \BB N \cup \{0\}$. 
\medskip

\noindent
For $a < b$, we define the discrete interval $[a,b]_{\BB Z}:= [a,b]\cap\BB Z$. 
\medskip

\noindent
If $f  :(0,\infty) \rta \BB R$ and $g : (0,\infty) \rta (0,\infty)$, we say that $f(\ep) = O_\ep(g(\ep))$ (resp.\ $f(\ep) = o_\ep(g(\ep))$) as $\ep\rta 0$ if $f(\ep)/g(\ep)$ remains bounded (resp.\ tends to zero) as $\ep\rta 0$. We similarly define $O(\cdot)$ and $o(\cdot)$ errors as a parameter goes to infinity. 
\medskip

\noindent
Let $\{E^\ep\}_{\ep>0}$ be a one-parameter family of events. We say that $E^\ep$ occurs with \emph{polynomially high probability} as $\ep\rta 0$ if there is a $p > 0$ (independent from $\ep$ and possibly from other parameters of interest) such that  $\BB P[E^\ep] \geq 1 - O_\ep(\ep^p)$.  
\medskip

\noindent
For $z\in\BB C$ and $r>0$, we write $B_r(z)$ for the open Euclidean ball of radius $r$ centered at $z$. More generally, for $X\subset \BB C$ we write $B_r(X) = \bigcup_{z\in X} B_r(z)$. We also define the open Euclidean annulus
\eqb \label{eqn-annulus-def}
\BB A_{r_1,r_2}(z) := B_{r_2}(z) \setminus \ol{B_{r_1}(z)} ,\quad\forall 0 < r_r < r_2 < \infty .
\eqe 
\medskip
 
\subsection{Independence across concentric annuli}
\label{sec-annulus-iterate}

As is the case for many papers involving LQG distances, a key tool in our proof is the following estimate, which is a consequence of the fact that the restrictions of the GFF $h$ to disjoint concentric annuli are nearly independent. See~\cite[Lemma 3.1]{local-metrics} for a proof of a slightly more general result.

\begin{lem}[\!\!\cite{local-metrics}] \label{lem-annulus-iterate}
Fix $0 < \mu_1<\mu_2 < 1$. Let $\{r_k\}_{k\in\BB N}$ be a decreasing sequence of positive real numbers such that $r_{k+1} / r_k \leq \mu_1$ for each $k\in\BB N$ and let $\{E_{r_k} \}_{k\in\BB N}$ be events such that $E_{r_k} \in \sigma\left( (h-h_{r_k}(0)) |_{\BB A_{\mu_1 r_k , \mu_2 r_k}(0)  } \right)$ for each $k\in\BB N$ (here we use the notation for Euclidean annuli from Section~\ref{sec-notation}). 
For each $a > 0$, there exists $p = p(a,\mu_1,\mu_2) \in (0,1)$ and $c = c(a,\mu_1,\mu_2) > 0$ such that if  
\eqb \label{eqn-annulus-iterate-prob}
\BB P\left[ E_{r_k}  \right] \geq p , \quad \forall k\in\BB N  ,
\eqe 
then 
\eqb \label{eqn-annulus-iterate}
\BB P\left[ \text{$\exists k\in [1,K]_{\BB Z}$ such that $E_{r_k}$ occurs} \right] \geq 1 -  c e^{-a K} ,\quad\forall K \in \BB N. 
\eqe  
\end{lem}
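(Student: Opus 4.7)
The plan is to prove this by exploiting the approximate independence of the whole-plane GFF across disjoint concentric annuli. The two key geometric inputs are that (i) because $\mu_2 < 1$ and $r_{k+1}/r_k \leq \mu_1$, the annuli $\BB A_{\mu_1 r_k, \mu_2 r_k}(0)$ are pairwise disjoint with a uniform multiplicative gap; and (ii) subtracting the circle average $h_{r_k}(0)$ removes the long-range correlations, leaving a ``local'' field whose law is (approximately) scale invariant under $z \mapsto r_k z$.

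The main workhorse is the Markov decomposition of the GFF. For each $k$, pick a slightly larger annulus $A_k = \BB A_{\mu_1' r_k, \mu_2' r_k}(0)$ with $\mu_1' < \mu_1$ and $\mu_2 < \mu_2'$, chosen so the $A_k$ remain pairwise disjoint (possible thanks to the gap in (i)). Writing $h = \mathring h_k + \mathfrak h_k$ on $A_k$ where $\mathring h_k$ is a zero-boundary GFF on $A_k$ and $\mathfrak h_k$ is harmonic on $A_k$, the zero-boundary pieces $\{\mathring h_k\}_{k\in\BB N}$ are mutually independent, while each $\mathfrak h_k$ is $\sigma(h|_{\BB C \setminus A_k})$-measurable. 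Since $\partial B_{r_k}(0)$ lies outside $A_k$ (here $\mu_2' < 1$), the circle average $h_{r_k}(0)$ is also outside-measurable, so the entire ``outside'' field together with the harmonic piece is independent of $\mathring h_k$.

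Next I would work inward iteratively. Condition on the $\sigma$-algebra $\mcl F_k := \sigma(h|_{\BB C \setminus B_{\mu_2' r_k}(0)})$ (which is increasing in $k$). Under this conditioning, the event $E_{r_k}$ is determined by $\mathring h_k$ plus a deterministic shift by the harmonic function $\mathfrak h_k - h_{r_k}(0)$ restricted to $\BB A_{\mu_1 r_k, \mu_2 r_k}(0)$. Harmonic functions on the slightly larger annulus have uniformly controlled $L^\infty$ norm on the smaller annulus in terms of their boundary behavior, and the relevant Gaussian variances are $O(1)$ uniformly in $k$; hence Gaussian concentration produces a high-probability event $G_k \in \mcl F_k$ on which $\|\mathfrak h_k - h_{r_k}(0)\|_{L^\infty(\BB A_{\mu_1 r_k, \mu_2 r_k})} \leq M$ for a constant $M = M(\mu_1,\mu_2,a)$. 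Using the Cameron--Martin theorem (equivalently, an absolute-continuity argument between $\mathring h_k$ and $\mathring h_k + (\mathfrak h_k - h_{r_k}(0))$ in Cameron--Martin space), if $p$ is taken sufficiently close to $1$ then on $G_k$ one has $\BB P[E_{r_k} \mid \mcl F_k] \geq p'$ for some $p' > 0$ with $p' \to 1$ as $p \to 1$.

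Finally, iterate: by the tower property,
\eqbn
\BB P\left[\bigcap_{k=1}^K E_{r_k}^c\right]
\leq \BB E\!\left[\prod_{k=1}^{K}\left(\mathbbm 1_{G_k^c} + \mathbbm 1_{G_k}(1-p')\right)\right]
\leq \sum_{k=1}^{K} \BB P[G_k^c] + (1-p')^K,
\eqen
and choosing $p$ close enough to $1$ so that $1-p' \leq e^{-2a}$ and $\BB P[G_k^c] \leq e^{-2ak}$ yields the claimed bound $c e^{-aK}$. The main obstacle is Step 3: turning the unconditional lower bound $\BB P[E_{r_k}] \geq p$ into a lower bound for the conditional probability given $\mcl F_k$, which requires simultaneously quantifying (a) the harmonic-piece oscillation through Gaussian tail bounds and (b) the absolute continuity of the Gaussian law under a bounded additive shift, with the shift being allowed to depend measurably on $\mcl F_k$.
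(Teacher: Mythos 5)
The paper does not actually prove this lemma; it cites \cite[Lemma~3.1]{local-metrics}. So your proposal is a blind attempt at a proof that the paper itself delegates to a reference, and I'll assess it on its own terms. Your decomposition ideas are in the right spirit (Markov property, Cameron--Martin/absolute continuity, iterated conditioning via the tower property), and two minor issues are fixable: you should run the Markov decomposition with respect to the ball $B_{\mu_2' r_k}(0)$ rather than the annulus $A_k$, since the annulus-harmonic part $\mathfrak h_k$ also depends on $h|_{B_{\mu_1' r_k}(0)}$ and is therefore \emph{not} $\mcl F_k$-measurable as your step requires; and your first displayed inequality really iterates to $P_K \leq \BB P[G_K^c] + (1-p') P_{K-1}$, which is fine as far as it goes.

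The serious gap is the claim at the end that $\BB P[G_k^c] \leq e^{-2ak}$. The event $G_k$ asks that the $\mcl F_k$-measurable shift (harmonic part minus circle average) be bounded by a fixed $M$ on $\BB A_{\mu_1 r_k,\mu_2 r_k}(0)$. By the exact scale invariance of $h - h_{r_k}(0)$ under $z\mapsto r_k z$, the law of this shift (suitably rescaled) does not depend on $k$, so $\BB P[G_k^c]$ is a \emph{constant} $\delta=\delta(M,\mu_1,\mu_2)>0$, not something decaying in $k$; and it has nothing to do with $p$, so it cannot be forced small by taking $p$ close to $1$. Plugging a constant $\delta$ into the recursion gives $P_K \leq \delta/p' + (1-p')^K$, which is bounded away from zero and hence cannot produce the exponential bound $c e^{-aK}$. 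You cannot rescue this by letting $M=M_k$ grow with $k$ either: that would shrink $\BB P[G_k^c]$ but blow up the Cameron--Martin/Radon--Nikodym control, so $p'$ would degrade with $k$ and the product would again stall. The correct argument cannot cleave off an unconditional good event $G_k$ and union-bound its failure; one has to keep the shift size and the conditional probability estimate coupled inside the iteration (for instance, bounding $\BB P[E_{r_k}^c\mid\mcl F_k]$ by a deterministic constant $<1$ after a more careful choice of what is conditioned on, as in the cited \cite[Lemma~3.1]{local-metrics}). As written, the final step of your proposal does not go through.
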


\subsection{Localized approximation of LFPP}
\label{sec-localized-approx}

A somewhat annoying feature of our definition of LFPP is that the mollified process $\{h_\ep^*\}_{\ep > 0}$ does not depend locally on $h$. This is because the heat kernel $p_{\ep^2/2}(z)$ is non-zero on all of $\BB C$. In~\cite[Section 2.1]{lqg-metric-estimates}, the authors got around this difficulty by introducing a truncated version $\wh h_\ep^*$ of $h_\ep^*$ which depends locally on $h$. They then showed that LFPP defined using $\wh h_\ep^*$ instead of $h_\ep^*$ itself is a good approximate for $D_h^\ep$. 

The truncated version of LFPP used in~\cite{lqg-metric-estimates} is not quite good enough for our purposes since with the definitions used there, $\wh h_\ep^*(z)$ depends on $h|_{B_{\ep^{1/2}}(z)}$. We need a range of dependence which is smaller than $\ep^{1-\zeta}$ for every $\zeta  > 0$. So, in this subsection we will introduce a truncated version of of LFPP with a smaller range of dependence. We follow closely the exposition in~\cite[Section 2.1]{lqg-metric-estimates}, but some of our estimates are sharper. 

Fix $q > 0$.
For $\ep > 0$, let $\psi_\ep  : \BB C\rta [0,1]$ be a deterministic, smooth, radially symmetric bump function which is identically equal to 1 on $B_{ \ep (\log\ep^{-1})^q   /2  }(0)$ and vanishes outside of $B_{ \ep (\log\ep^{-1})^q    }(0)$.
We can choose $\psi_\ep$ in such a way that $(z,\ep) \mapsto \psi_\ep(z)$ is smooth. Recalling that $p_s(z )$ denotes the heat kernel, we define
\eqb \label{eqn-localized-def}
\wh h_\ep^*(z) :=  Z_\ep^{-1} \int_{\BB C} \psi_\ep(z-w) h(w) p_{\ep^2/2} (z-w) \, dw ,
\eqe
where the integral interpreted in the sense of distributional pairing and normalizing constant is given by
\eqb \label{eqn-localized-constant}
Z_\ep := \int_{\BB C}\psi_\ep(w)   p_{\ep^2/2} (w) \, dw .
\eqe
Let us now note some properties of $\wh h_\ep^*$. 

Since $\psi_\ep$ vanishes outside of $B_{\ep (\log\ep^{-1})^q   }(0)$, we have that $\wh h_\ep^*(z)$ is a.s.\ determined by $h|_{\ep B_{(\log\ep^{-1})^q   }(z)}$. 
It is easy to see that $\wh h_\ep^*$ a.s.\ admits a continuous modification (see Lemma~\ref{lem-localized-approx} below).
We henceforth assume that $\wh h_\ep^*$ is replaced by such a modification. 

If $c \in \BB R$ and we define $(\widehat{h+c})_\ep^*$ as in~\eqref{eqn-localized-def} but with $h+c$ in place of $h$, then
\eqb \label{eqn-localized-weyl}
(\widehat{h+c})_\ep^* = \wh h_\ep^*  + c .
\eqe
This is because of the normalization by $Z_\ep^{-1}$ in~\eqref{eqn-localized-def}. 

As in~\eqref{eqn-gff-sup}, we define the localized LFPP metric
\eqb \label{eqn-localized-lfpp}
\wh D_h^\ep(z,w) := \inf_{P : z\rta w} \int_0^1 e^{\xi \wh h_\ep^*(P(t))} |P'(t)| \,dt ,
\eqe
where the infimum is over all piecewise continuously differentiable paths from $z$ to $w$. 
By the definition of $\wh h_\ep^*$, 
\eqb \label{eqn-localized-property}
\text{for any open $U\subset \BB C$, the internal metric $\wh D_h^\ep(\cdot,\cdot; U)$ is a.s.\ determined by $h|_{ B_{\ep (\log\ep^{-1})^q    } (U)}$} .
\eqe

\begin{lem} \label{lem-localized-approx}
Almost surely, $(z,\ep)\mapsto \wh h_\ep^*(z)$ is continuous. 
Furthermore, for each bounded open set $U\subset \BB C$, a.s.\ 
\eqb \label{eqn-localized-approx}
\lim_{\ep\rta 0} \sup_{z\in \ol U} |h_\ep^*(z) - \wh h_\ep^*(z)|  = 0.
\eqe
In particular, a.s.\
\eqb \label{eqn-localized-lfpp-approx}
\lim_{\ep\rta 0}  \frac{\wh D_h^\ep(z,w;U)}{D_h^\ep(z,w;U)} = 1 , \quad \text{uniformly over all $z,w\in U$ with $z\not=w$}.
\eqe
\end{lem}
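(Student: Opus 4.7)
The plan is to prove the three assertions in order, with the main work being a super-polynomial $L^2$-estimate for $h_\ep^*(z) - \wh h_\ep^*(z)$. The starting point is the observation that $1 - Z_\ep \leq \int_{|w| > \ep(\log\ep^{-1})^q/2} p_{\ep^2/2}(w)\,dw$ decays faster than any polynomial in $\ep$ by the Gaussian tail, and that both $h_\ep^*(z)$ and $\wh h_\ep^*(z)$ are pairings of $h$ against smooth, compactly supported test functions that differ only by the factor $Z_\ep^{-1}\psi_\ep(z-\cdot)$.

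For the joint continuity assertion, write $\wh h_\ep^*(z) = (h, \phi_\ep^z)$ with $\phi_\ep^z(w) := Z_\ep^{-1}\psi_\ep(z-w) p_{\ep^2/2}(z-w)$. Since $\psi_\ep$ was chosen so that $(z,\ep,w) \mapsto \phi_\ep^z(w)$ is smooth and compactly supported, the covariance kernel of the Gaussian field $(z,\ep) \mapsto \wh h_\ep^*(z)$, which is a double integral of $\phi_{\ep_1}^{z_1} \otimes \phi_{\ep_2}^{z_2}$ against the GFF covariance, is jointly smooth. Kolmogorov's continuity criterion applied to increments then yields a.s.\ joint continuity of $(z,\ep) \mapsto \wh h_\ep^*(z)$.

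For~\eqref{eqn-localized-approx}, write $h_\ep^*(z) - \wh h_\ep^*(z) = (h, \eta_\ep^z)$ with $\eta_\ep^z(w) := (1 - Z_\ep^{-1}\psi_\ep(z-w)) p_{\ep^2/2}(z-w)$, and decompose $\eta_\ep^z$ into its restriction to $B_{\ep(\log\ep^{-1})^q/2}(z)$, where it equals $(1 - Z_\ep^{-1})p_{\ep^2/2}(z-\cdot)$, and its restriction to the complement, where it is pointwise bounded by $p_{\ep^2/2}(z-\cdot)$ on $|w-z| > \ep(\log\ep^{-1})^q/2$ and hence super-polynomially small uniformly. Using the whole-plane GFF covariance kernel $G$, I would bound the variance contribution of the first piece by $(1 - Z_\ep^{-1})^2 \op{Var}(h_\ep^*(z)) = O((1 - Z_\ep^{-1})^2 \log \ep^{-1})$, and bound the variance contribution of the second piece and the cross term using their $L^1$-norms together with the logarithmic divergence of $G$. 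The total is $o(\ep^p)$ for every $p > 0$, uniformly in $z$ in any compact subset of $\BB C$. Gaussian concentration then gives $\BB P[|h_\ep^*(z) - \wh h_\ep^*(z)| > \ep^{p}] = o(\ep^{p'})$ for every $p, p' > 0$. A polynomial net in $\ol U$ with a union bound, the a.s.\ equicontinuity of $h_\ep^*$ and $\wh h_\ep^*$ (again from Kolmogorov applied to the estimates above), and Borel--Cantelli along $\ep_n := 2^{-n}$ then give~\eqref{eqn-localized-approx} after interpolating via joint continuity in $\ep$.

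The final claim~\eqref{eqn-localized-lfpp-approx} is immediate from~\eqref{eqn-localized-approx}: setting $\delta_\ep := \sup_{z \in \ol U} |h_\ep^*(z) - \wh h_\ep^*(z)|$, any path $P$ from $z$ to $w$ contained in $U$ satisfies $|\xi h_\ep^*(P(t)) - \xi \wh h_\ep^*(P(t))| \leq \xi \delta_\ep$ pointwise, so the path integrals defining the internal metrics $D_h^\ep(z,w;U)$ and $\wh D_h^\ep(z,w;U)$ differ by a multiplicative factor in $[e^{-\xi \delta_\ep}, e^{\xi \delta_\ep}]$, and this passes to the infima over paths in $U$. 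The main obstacle is the variance bound in the preceding paragraph: the pointwise and $L^1$ decay of $\eta_\ep^z$ are essentially automatic from the Gaussian tail, but one has to check that these decay estimates survive integration against the logarithmic GFF covariance kernel uniformly in $z \in \ol U$, and that the contribution of the ``small correction'' piece proportional to $(1 - Z_\ep^{-1})$ is not inflated by the $\log\ep^{-1}$ variance of $h_\ep^*(z)$ itself.
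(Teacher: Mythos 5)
Your approach differs genuinely from the paper's. The paper proves~\eqref{eqn-localized-approx} entirely pathwise: since the heat kernel and $\psi_\ep$ are radially symmetric, one can write both $h_\ep^*(z)$ and $Z_\ep \wh h_\ep^*(z)$ as weighted averages of the circle-average process, namely $h_\ep^*(z) = \frac{2}{\ep^2}\int_0^\infty r\, h_r(z)\, e^{-r^2/\ep^2}\,dr$ and analogously for $Z_\ep \wh h_\ep^*(z)$ with the integrand multiplied by $\psi_\ep(r)$. Joint continuity then follows from continuity of the circle-average process, and the difference $|h_\ep^*(z) - Z_\ep\wh h_\ep^*(z)|$ is bounded by a tail integral $\frac{2}{\ep^2}\int_{\ep(\log\ep^{-1})^q/2}^\infty r|h_r(z)|e^{-r^2/\ep^2}\,dr$. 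Plugging in the a.s.\ pathwise bound $\sup_{z\in B_R(0)}\sup_{r>0}\frac{|h_r(z)|}{\max\{\log(1/r),|\log r|^{1/2+\zeta},1\}} < \infty$ (Lemma~\ref{lem-gff-sup}, imported from earlier work) makes the tail integral a deterministic expression that manifestly vanishes as $\ep\rta 0$; the normalization $Z_\ep\rta 1$ is then handled by the elementary Gaussian tail bound~\eqref{eqn-localized-Z-bound} and the same pathwise sup bound~\eqref{eqn-localized-sup}. This pathwise estimate does all the work at once: no moment bounds, no union bounds over nets, no Borel--Cantelli.

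Your $L^2$/concentration route is plausible in broad outline, and your super-polynomial variance estimate for $(h,\eta_\ep^z)$ is essentially correct (modulo the small technical point that $\eta_1$ is the truncation of $(1-Z_\ep^{-1})p_{\ep^2/2}(z-\cdot)$ to the ball $B_{\ep(\log\ep^{-1})^q/2}(z)$ rather than that function itself, so its variance is not literally $(1-Z_\ep^{-1})^2\op{Var}(h_\ep^*(z))$; the correction is harmless). The final step~\eqref{eqn-localized-lfpp-approx} is immediate from the sup bound, just as in the paper.

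However, there is a genuine gap in the interpolation over $\ep$. Your Borel--Cantelli argument controls $\sup_{z\in\ol U}|h_{\ep_n}^*(z)-\wh h_{\ep_n}^*(z)|$ only along the dyadic sequence $\ep_n = 2^{-n}$. Appealing to joint continuity of $(z,\ep)\mapsto\wh h_\ep^*(z)$ to fill in $\ep\in[\ep_{n+1},\ep_n]$ does not work as stated: qualitative continuity gives no quantitative bound on $\sup_{z,\,\ep\in[\ep_{n+1},\ep_n]}|h_\ep^*(z)-\wh h_\ep^*(z)|$ in terms of its values at the endpoints, and the individual fields $h_\ep^*$ and $\wh h_\ep^*$ fluctuate by $O(1)$ over a dyadic $\ep$-interval, so one cannot deduce smallness of the difference from a two-point comparison. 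To close this you would need a uniform estimate over the full block $\ol U\times[\ep_{n+1},\ep_n]$, e.g.\ by running a two-parameter chaining/Kolmogorov argument for the Gaussian field $(z,\ep)\mapsto h_\ep^*(z)-\wh h_\ep^*(z)$, or by combining a Dudley entropy bound for the supremum with the Borell--TIS inequality using the fact that $\sup\op{Var}$ over that block is super-polynomially small in $2^{-n}$. This is doable but it is precisely the work that the paper's pathwise circle-average bound sidesteps.
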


For the proof of Lemma~\ref{lem-localized-approx}, we will re-use the following estimate, which is~\cite[Lemma 2.2]{lqg-metric-estimates}.
 
\begin{lem} \label{lem-gff-sup} 
For each $R>0$ and $\zeta > 0$, a.s.\
\eqb \label{eqn-gff-sup}
  \sup_{z\in B_R(0)}  \sup_{r > 0}   \frac{|h_r(z)|}{  \max\{   \log(1/r) , |\log r|^{1/2+\zeta} ,  1\} }    < \infty .
\eqe
\end{lem}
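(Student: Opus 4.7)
The plan is to prove the bound separately in the small-scale regime $r \in (0, 1/2]$, the bounded regime $r \in [1/2, 2]$, and the large-scale regime $r \in [2, \infty)$, since the denominator is dominated by $\log(1/r)$, by $1$, and by $|\log r|^{1/2+\zeta}$, respectively, in these three ranges. The bounded regime is immediate from the a.s.\ joint continuity of $(z,r) \mapsto h_r(z)$ on the compact set $\ol{B_R(0)} \times [1/2,2]$. For the other two regimes, my strategy is the standard one for suprema of Gaussian processes: combine a pointwise Gaussian tail bound on a dyadic grid with a modulus-of-continuity estimate to interpolate between grid points.

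For the small-scale regime, I would set $r_k = 2^{-k}$ and let $\mcl G_k \subset B_R(0)$ be a grid of spacing $r_k$, so $|\mcl G_k| = O(R^2 4^k)$. From the covariance formula in the excerpt one computes $\op{Var}(h_{r_k}(z)) = k\log 2 + O(1)$ uniformly in $z \in B_R(0)$, so the Gaussian tail gives $\BB P[|h_{r_k}(z)| \geq C k] \leq 2 e^{-c C^2 k}$. Choosing $C$ large and union bounding over $\mcl G_k$ makes the resulting probability summable in $k$, and Borel--Cantelli yields that a.s.\ $|h_{r_k}(z_j)| \leq C k$ for all $z_j \in \mcl G_k$ and all $k$ sufficiently large. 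A Kolmogorov-type Hölder estimate for the circle-average process, giving an exponent arbitrarily close to $1/2$ on the compact set $B_R(0)$ and only a polylogarithmic blow-up in $1/r$, then lets me interpolate to obtain $|h_r(z)| \leq C' \log(1/r)$ uniformly in $r \in (0, 1/2]$ and $z \in B_R(0)$.

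For the large-scale regime, I would take $r_k = 2^k$ and use that $\op{Var}(h_{r_k}(z)) = k\log 2 + O(1)$ for $z \in B_R(0)$, since for $k$ large the circle of radius $r_k$ around $z$ behaves like the circle of radius $r_k$ around the origin. The Gaussian tail now gives
\eqbn
\BB P\bigl[|h_{r_k}(z)| \geq C k^{1/2+\zeta}\bigr] \leq 2\exp\bigl(-c C^2 k^{2\zeta}\bigr) ,
\eqen
which is summable regardless of the specific constant $C$. Combined with spatial continuity of $h_r(\cdot)$ on $B_R(0)$ for each fixed $r$, this delivers $|h_r(z)| \leq C |\log r|^{1/2+\zeta}$ uniformly in $r \geq 2$ and $z \in B_R(0)$.

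The main obstacle is the modulus-of-continuity step in the small-scale regime: the Hölder constant naturally degenerates as $r \to 0$, and one must ensure the blow-up stays below $\log(1/r)$. This is achieved by matching the grid spacing to the scale $r_k$, taking the Hölder exponent close to $1/2$, and invoking Kolmogorov's continuity criterion with sufficiently high moments; once this interpolation step is in place, the remainder is routine union bound plus Borel--Cantelli.
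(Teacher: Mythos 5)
The paper does not prove this lemma: it is quoted verbatim as \cite[Lemma 2.2]{lqg-metric-estimates}, so there is no in-paper argument to compare your proposal against. Your self-contained strategy --- split into three regimes in $r$, discretize over dyadic scales, combine Gaussian tail bounds with a union bound and Borel--Cantelli, then interpolate between grid points via a modulus of continuity for the circle average process --- is the natural one and is essentially correct.

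There is, however, a genuine issue in the small-$r$ interpolation step as written. The modulus of continuity one would cite is \cite[Proposition 3.1]{shef-kpz} (which the paper itself uses for circle average continuity), and it has the form
$|h_r(z) - h_s(w)| \leq M \bigl(\log\frac{1}{r}\bigr)^{\zeta'} \,(|z-w|+|r-s|)^\gamma / r^{\gamma+\ep}$
for any $\gamma < 1/2$ and $\ep,\zeta' > 0$, with a genuine \emph{polynomial} factor $r^{-\ep}$ in the denominator, not merely a polylogarithmic one. With your proposed grid spacing $\asymp r_k$, the interpolation error over a cell is then $\asymp r_k^{-\ep} = 2^{k\ep}$, which grows exponentially in $k$ and swamps the target bound $C\log(1/r_k) = Ck$; the phrase ``only a polylogarithmic blow-up in $1/r$'' is not accurate for spacing $r_k$. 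The fix is small but necessary: take the grid spacing to be $r_k^{1+\ep'}$ for some $\ep' \geq \ep/\gamma$ and grid $r$ as well. Then the interpolation error over a cell becomes $O(k^{\zeta'})$, while the cardinality of the grid increases only to $O\bigl(4^{k(1+\ep')}\bigr)$, which is still annihilated by the Gaussian tail $e^{-cC^2 k}$ once $C$ is chosen large. Alternatively, one can dispense with the a.s.\ modulus entirely and bound $\sup_{\text{cell}} |h_r(z) - h_{r_k}(z_j)|$ probabilistically via a chaining or Borell--TIS argument, using that each cell has $O(1)$ diameter in the natural rescaled metric $(|z-z'|+|r-r'|)/r_k$, and then union bound and Borel--Cantelli as before. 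With either correction the small-$r$ regime closes; the large-$r$ regime is analogous but strictly easier since the modulus does not degenerate, though the appeal to ``spatial continuity for each fixed $r$'' should be promoted to a joint grid in $(z,r)$ with a modulus/chaining estimate, exactly as at small scales.
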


\begin{proof}[Proof of Lemma~\ref{lem-localized-approx}] 
The functions $w \mapsto \psi_\ep(z-w)$ and $w\mapsto p_{\ep^2/2} (z,w)$ are each radially symmetric about $z$, i.e., they depend only on $|z-w|$. 
Using the circle average process $\{h_r\}_{r > 0}$, we may therefore write in polar coordinates
\eqb \label{eqn-localized-approx-polar}
h_\ep^*(z) = \frac{2}{ \ep^2} \int_0^\infty  r h_r(z) e^{-r^2/\ep^2} \,dr \quad \text{and} \quad
Z_\ep \wh h_\ep^*(z) = \frac{2 }{  \ep^2} \int_0^{ \ep (\log\ep^{-1})^q  } r h_r(z) \psi_\ep(r) e^{-r^2/\ep^2} \,dr .
\eqe
From this representation and the continuity of the circle average process~\cite[Proposition 3.1]{shef-kpz}, we infer that $(z,\ep) \mapsto \wh h_\ep^*(z)$ a.s.\ admits a continuous modification. 

Since $\psi_\ep \equiv 1$ on $B_{\ep (\log\ep^{-1})^q  / 2}(z)$ and $\psi_\ep$ takes values in $[0,1]$, 
\eqb \label{eqn-localized-compare-pt}
|h_\ep^*(z) - Z_\ep \wh h_\ep^*(z)| \leq  \frac{2}{\ep^2} \int_{\ep (\log\ep^{-1})^q /2}^\infty r |h_r(z)| e^{-r^2/\ep^2} \, dr .
\eqe
By Lemma~\ref{lem-gff-sup} (applied with $\zeta  =1/2$, say), a.s.\ there is a random $C = C(U)  > 0$ such that $|h_r(z) | \leq C \max\{ \log(1/r) , \log r , 1\}$ for each $z\in U$ and $r  > 0$. 
Plugging this into~\eqref{eqn-localized-compare-pt} shows that a.s.\
\eqb \label{eqn-localized-approx-Z}
\sup_{z\in U} |h_\ep^*(z) - Z_\ep \wh h_\ep^*(z)|
\leq \frac{2 C }{\ep^2} \int_{\ep (\log\ep^{-1})^q  /2}^\infty r   \max\{ \log(1/r) , \log r , 1\}     e^{-r^2/\ep^2} \, dr   ,
\eqe
which tends to zero as $\ep\rta 0$. 

To eliminate the normalizing factor $Z_\ep$ in~\eqref{eqn-localized-approx-Z}, we first note that $\int_{\BB C} p_{\ep^2/2}(w) \,dw = 1$. From this and~\eqref{eqn-localized-constant}, we have 
\allb \label{eqn-localized-Z-bound}
0 
\leq 1 - Z_\ep 
&\leq  \frac{2}{\ep^2} \int_{0}^\infty r (1 - \psi_\ep(r))  e^{-r^2/\ep^2} \, dr \notag\\ 
&\leq  \frac{2}{\ep^2} \int_{ \ep (\log\ep^{-1})^q  /2}^\infty r     e^{-r^2/\ep^2} \, dr \notag\\ 
&=  \int_{(\log\ep^{-1})^q / \sqrt 2  }^\infty u     e^{-u^2 / 2} \, du \quad \text{(substitute $r = \ep u / \sqrt 2 $)} \notag\\ 
&=      e^{- (\log\ep^{-1})^q / 4}  .
\alle
Furthermore, from Lemma~\ref{lem-gff-sup} and~\eqref{eqn-localized-approx-polar}, we get a.s.\ there is a random $A > 0$ such that 
\eqb \label{eqn-localized-sup}
\sup_{z\in U}    |Z_\ep \wh h_\ep^*(z) | \leq 3 \log \ep^{-1} + A  .
\eqe 

By the triangle inequality, 
\eqb \label{eqn-localized-tri}
\sup_{z\in U} |h_\ep^*(z) - \wh h_\ep^*(z)|
\leq \sup_{z\in U} |h_\ep^*(z) - Z_\ep \wh h_\ep^*(z)| + (1-Z_\ep) Z_\ep^{-1} \sup_{z\in U}  |Z_\ep \wh h_\ep^*(z) | .
\eqe 
By~\eqref{eqn-localized-approx-Z}, the first term on the right side of~\eqref{eqn-localized-tri} tends to zero a.s.\ as $\ep\rta 0$. By~\eqref{eqn-localized-Z-bound} and~\eqref{eqn-localized-sup}, the second term tends to zero a.s.\ as $\ep\rta 0$ as well. We thus obtain~\eqref{eqn-localized-approx}. 
The relation~\eqref{eqn-localized-lfpp-approx} is immediate from~\eqref{eqn-localized-approx} and the definitions of $D_h^\ep$ and $\wh D_h^\ep$. 
\end{proof}

\section{Proofs}
\label{sec-proofs}

Throughout this section, we fix $\xi > 0$, we let $h$ be a whole-plane GFF, and we let $D_h$ be a weak LQG metric as in Definition~\ref{def-metric}. 
We will prove Theorem~\ref{thm-lqg-lfpp} in Sections~\ref{sec-event} through~\ref{sec-lqg-lfpp}, then deduce our other main theorems from Theorem~\ref{thm-lqg-lfpp} in Section~\ref{sec-scaling}.

\subsection{Outline of the proof}
\label{sec-outline}

The idea of the proof of Theorem~\ref{thm-lqg-lfpp} is as follows. In Section~\ref{sec-event}, we use a basic scaling calculation together with the tightness of LFPP~\cite{dddf-lfpp,dg-supercritical-lfpp} and the tightness across scales condition for $D_h$ (Axiom~\ref{item-metric-coord}) to get the following. There is a constant $C>0$ such that for each $r\in [\ep,1]$ and each $z\in\BB C$, it holds with high probability that 
\eqb \label{eqn-outline-event}
\frk a_\ep^{-1} D_h^\ep\left(\text{around $\BB A_{2r,3r}(z)$} \right) \leq  C  \frac{r \frk a_{\ep/r}}{\frk c_r \frk a_\ep} D_h(\text{across $\BB A_{r,2r}(z)$})  
\eqe 
where we use the notation for distances across and around annuli from Definition~\ref{def-across-around}. Moreover, one has an analogous inequality with the roles of $D_h$ and $\frk a_\ep^{-1} D_h^\ep$ interchanged. Actually, for technical reasons we will mostly work with the localized LFPP metric $\wh D_h^\ep$ from Section~\ref{sec-localized-approx} instead of $D_h^\ep$ itself. 

In Section~\ref{sec-scales}, we restrict attention to values of $r$ in the set
\eqb \label{eqn-outline-radius-set}
\mcl R^\ep := \left\{ 10^{-j} \ep^{1-\zeta} : j = 0,\dots, \lfloor \frac{\zeta}{2} \log_{10}\ep^{-1} \rfloor -11 \right\}  \subset \left[ \ep ,\ep^{1-\zeta}  \right]  .
\eqe 
We will use a multi-scale argument based on Lemma~\ref{lem-annulus-iterate} to say that if $\wt{\mcl R}^\ep \subset \mcl R^\ep$ is a subset with $\#\wt{\mcl R}^\ep \geq \#\mcl R^\ep / 100$, then the following is true. For each open set $U\subset \BB C$, it holds with high probability that we can cover $U$ by balls $B_r(z)$ for $z\in U$ and $r\in \wt{\mcl R}^\ep$ for which the event in~\eqref{eqn-outline-event} occurs. By stringing together paths around annuli of the form $\BB A_{2r,3r}(z)$ whose $D_h^\ep$-lengths are under control, we will then deduce that with high probability, 
\eqb \label{eqn-outline-compare}
\frk a_\ep^{-1} D_h^\ep\left( B_{ \ep^{1-\zeta}}(z) , B_{  \ep^{1-\zeta}}(w) ; B_{ \ep^{1-\zeta}}(U) \right) 
\leq C \left(\max_{r \in \wt{\mcl R}^\ep } \frac{r \frk a_{\ep/r}}{\frk c_r \frk a_\ep} \right) D_h(z,w ; U) ,\quad \forall z,w\in U  
\eqe 
for a possibly larger constant $C$. One also has an analogous inequality with the roles of $D_h$ and $\frk a_\ep^{-1} D_h^\ep$ interchanged. The argument leading to~\eqref{eqn-outline-compare} is similar to the proof of the bi-Lipschitz equivalence of two metrics coupled to the GFF in~\cite[Section 4]{local-metrics}. 

The inequalities~\eqref{eqn-outline-compare} would imply Theorem~\ref{thm-lqg-lfpp} if we had up-to-constants bounds for the ratios $\frac{r \frk a_{\ep/r}}{\frk c_r \frk a_\ep}$ for $r\in \wt{\mcl R}^\ep$. 
In Section~\ref{sec-lqg-lfpp}, we obtain such up-to-constants bounds via the following bootstrap argument based on~\eqref{eqn-outline-compare}. If we fix distinct points $z,w\in U$, then $\frk a_\ep^{-1} D_h^\ep(z,w)$ and $D_h(z,w)$ are each typically of constant order. The inequality~\eqref{eqn-outline-compare} therefore implies that $\max_{r \in \wt{\mcl R}^\ep } \frac{r \frk a_{\ep/r}}{\frk c_r \frk a_\ep} \geq A^{-1}$ for some constant $A$ which does not depend on $r$ or $\ep$. Using the analog of~\eqref{eqn-outline-compare} with the roles of $D_h$ and $\frk a_\ep^{-1} D_h^\ep$ interchanged and possibly increasing $A$, we also get that $\min_{r \in \wt{\mcl R}^\ep } \frac{r \frk a_{\ep/r}}{\frk c_r \frk a_\ep} \leq A $. 
Since $\wt{\mcl R}^\ep$ is an arbitrary subset of $\mcl R^\ep$ of cardinality at least $\#\mcl R^\ep / 100$, this implies that all but $( 2/100) \#\mcl R^\ep$ values of $r\in \mcl R^\ep$ are ``good" in the sense that $\frac{r \frk a_{\ep/r}}{\frk c_r \frk a_\ep} \in [A^{-1} , A]$ . We then obtain Theorem~\ref{thm-lqg-lfpp} by applying~\eqref{eqn-outline-compare} (and its analog with $\frk a_\ep^{-1} D_h^\ep$ and $D_h$ interchanged) with $\wt{\mcl R}^\ep$ equal to the set of ``good" values of $r\in \mcl R^\ep$. 

\subsection{Event for LFPP and LQG distances}
\label{sec-event}

Fix $q>0$ and for $\ep > 0$, let $\wh D_h^\ep$ be the localized LFPP metric of Section~\ref{sec-localized-approx}. 
For $z\in \BB C$, $\ep \in (0,1)$, $r\in [\ep,1]$, and $C>0$ let $  E_r^\ep(z;C)$ be the event that the following is true:
\allb \label{eqn-compare-event}
D_h\left( \text{across $\BB A_{r,2r}(z)$} \right) &\geq C^{-1} \frk c_r e^{\xi h_r(z)} \notag \\
D_h\left( \text{around $\BB A_{2r,3r}(z)$} \right) &\leq C \frk c_r e^{\xi h_r(z)} \notag \\
\frk a_\ep^{-1} \wh D_h^\ep\left( \text{across $\BB A_{r,2r}(z)$} \right) &\geq C^{-1} \frac{r  \frk a_{\ep/r}}{\frk a_\ep}   e^{\xi h_r(z)} \notag \\
\frk a_\ep^{-1} \wh D_h^\ep \left( \text{around $\BB A_{2r,3r}(z)$} \right) &\leq C \frac{r  \frk a_{\ep/r}}{\frk a_\ep} e^{\xi h_r(z)}  .
\alle

We will eventually apply the independence of the GFF across disjoint concentric annuli (Lemma~\ref{lem-annulus-iterate}) and a union bound to show that if $C$ is large enough, then with high probability there are many points $z$ and radii $r$ for which $E_r(z)$ occurs. Let us now explain why the event $E_r(z;C)$ fits into the framework of Lemma~\ref{lem-annulus-iterate}. 

If $r > 2\ep (\log\ep^{-1})^{q}  $, then by the locality of $D_h$ (Axiom~\ref{item-metric-local}) and~\eqref{eqn-localized-property}, the event $E_r^\ep(z;C)$ is a.s.\ determined by the restriction of $h$ to the annulus $\BB A_{r/2,4r}(z)$. This locality property for $E_r^\ep(z;C)$ is the reason why we use $\wh D_h^\ep$ instead of $D_h^\ep$ in the definition. Since $D_h  $ and $\wh D_h^\ep$ transform in the same way when we add a constant to $h$ (see Axiom~\ref{item-metric-f} and~\eqref{eqn-localized-weyl}), we see that in fact $E_r^\ep(z;C)$ is a.s.\ determined by $h|_{\BB A_{r/2,4r}(z)}$ viewed modulo additive constant. 

Using tightness across scales (Axiom~\ref{item-metric-coord}) along with the tightness and scaling properties of LFPP, we can show that $E_r(z;C)$ occurs with high probability when $C$ is large.

\begin{lem} \label{lem-compare-event-prob}
For each $p\in (0,1)$, there exists $C   > 0$, depending on $p,\xi,$ and the law of $D_h$, such that
\eqb \label{eqn-compare-event-prob}
\BB P\left[ E_r^\ep(z;C) \right] \geq p ,\quad \forall z\in\BB C,\quad \forall 0 < \ep \leq r \leq 1 . 
\eqe 
\end{lem}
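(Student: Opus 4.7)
The plan is to establish each of the four bounds in~\eqref{eqn-compare-event} separately with probability at least $(3+p)/4$, uniformly in $z$, $r$, and $\ep$, and then conclude by a union bound. First, I reduce to $z=0$. The whole-plane GFF modulo additive constant is translation invariant in law, and each of the four inequalities in~\eqref{eqn-compare-event} is invariant under shifting $h$ by a deterministic constant $c$: by Axiom~\ref{item-metric-f}, $D_{h+c}=e^{\xi c}D_h$; by~\eqref{eqn-localized-weyl}, $\wh D_{h+c}^\ep=e^{\xi c}\wh D_h^\ep$; and the circle average transforms as $(h+c)_r(z)=h_r(z)+c$. Combined with Axiom~\ref{item-metric-translate}, this implies $\BB P[E_r^\ep(z;C)]$ does not depend on $z$, so I may set $z=0$. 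The first two inequalities then follow directly from Axiom~\ref{item-metric-coord} applied to $A=\BB A_{1,2}(0)$ (for the across distance) and $A=\BB A_{2,3}(0)$ (for the around distance): the axiom asserts that these $D_h$-distances, when normalized by $\frk c_r e^{\xi h_r(0)}$, together with their reciprocals, form tight families over $r>0$, so a large enough $C$ handles both.

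For the last two inequalities I use a scaling argument. Set $\wt h(u):=h(ru)-h_r(0)$; by scale invariance of the whole-plane GFF modulo additive constant, $\wt h\stackrel{d}{=}h$. A direct calculation using $p_{\ep^2/2}(r\zeta)=r^{-2}p_{(\ep/r)^2/2}(\zeta)$ yields $h_\ep^*(rw)=h_r(0)+\wt h_{\ep/r}^*(w)$, and the substitution $P=r\wt P$ in~\eqref{eqn-gff-lfpp} then gives
\[
\frk a_\ep^{-1}D_h^\ep(\text{across }\BB A_{r,2r}(0)) = \frac{r\,\frk a_{\ep/r}}{\frk a_\ep}\,e^{\xi h_r(0)}\cdot\frk a_{\ep/r}^{-1}D_{\wt h}^{\ep/r}(\text{across }\BB A_{1,2}(0)),
\]
with an analogous identity for distances around $\BB A_{2r,3r}(0)$. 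By the LFPP tightness results of~\cite{dddf-lfpp,dg-supercritical-lfpp}, the factors $\frk a_\delta^{-1}D_{\wt h}^\delta(\text{across }\BB A_{1,2}(0))$ and $\frk a_\delta^{-1}D_{\wt h}^\delta(\text{around }\BB A_{2,3}(0))$, together with their reciprocals, are tight over $\delta=\ep/r\in(0,1]$. Hence, for $C$ large enough, the analogues of the last two inequalities in~\eqref{eqn-compare-event} with $D_h^\ep$ in place of $\wh D_h^\ep$ each hold with probability at least $(3+p)/4$, uniformly in $\ep$ and $r$.

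It remains to transfer these bounds from $D_h^\ep$ to $\wh D_h^\ep$, which is the main technical point since Lemma~\ref{lem-localized-approx} as stated is only an almost-sure statement in the limit $\ep\to 0$. However, the quantitative estimate~\eqref{eqn-localized-approx-Z} produced in its proof, combined with Lemma~\ref{lem-gff-sup} for a uniform-in-$r$ control on the circle averages, shows that on any fixed bounded set $U\subset\BB C$, for every $\eta>0$ one has $\sup_{z\in U}|h_\ep^*(z)-\wh h_\ep^*(z)|\leq\eta$ with probability at least $1-\eta$ simultaneously for all $\ep\in(0,1]$. On this high-probability event, $\wh D_h^\ep/D_h^\ep\in[e^{-\xi\eta},e^{\xi\eta}]$ for all paths in $U$, so the bounds for $D_h^\ep$ from the previous paragraph yield the corresponding bounds for $\wh D_h^\ep$ after enlarging $C$ by a fixed factor. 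A union bound over the four events completes the proof.
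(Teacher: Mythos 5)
Your proposal takes essentially the same route as the paper: reduce to $z=0$ via translation invariance and the common Weyl scaling of $D_h$, $\wh D_h^\ep$, and $h_r$ under adding a constant to $h$; obtain the two $D_h$ bounds from Axiom~\ref{item-metric-coord}; obtain the two LFPP bounds from the scaling identity (the paper cites~\cite[Lemma 2.6]{lqg-metric-estimates}, which is exactly your computation) together with tightness of LFPP crossing distances from~\cite[Proposition 4.1]{dg-supercritical-lfpp}; and transfer from $D_h^\ep$ to $\wh D_h^\ep$ via Lemma~\ref{lem-localized-approx}. The paper dispatches that last transfer in one sentence, so your more explicit treatment is the only place where the two arguments diverge.

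On that point there is a small overstatement worth fixing. You claim that for every $\eta>0$, $\sup_{z\in U}|h_\ep^*(z)-\wh h_\ep^*(z)|\leq\eta$ holds simultaneously for all $\ep\in(0,1]$ with probability at least $1-\eta$. This cannot be literally true: the deterministic factor multiplying the random constant in~\eqref{eqn-localized-approx-Z} tends to $0$ only as $\ep\to 0$, and the normalization $Z_\ep$ in~\eqref{eqn-localized-Z-bound} degenerates as $\ep\to 1$ (the support of $\psi_\ep$ shrinks to a point), so the comparison is only effective for $\ep$ bounded away from $1$. What the quantitative estimates actually give is: for every $\eta>0$ there is a deterministic $\ep_0=\ep_0(\eta,U)>0$ such that with probability at least $1-\eta$ one has $\sup_{z\in U}|h_\ep^*(z)-\wh h_\ep^*(z)|\leq\eta$ for all $\ep\in(0,\ep_0]$. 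The remaining regime $\ep\in[\ep_0,1]$, $r\in[\ep,1]$ is a compact family of scales over which tightness of all four quantities in~\eqref{eqn-compare-event} (and their reciprocals) is immediate, so the lemma holds there after enlarging $C$. With this amendment your proof is correct and coincides with the paper's.
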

\begin{proof}
Since the event $E_r^\ep(z;C)$ is determined by $h$, viewed modulo additive constant, the law of $h$ is translation invariant modulo additive constant, and $D_h$ and $\wh D_h^\ep$ depend on $h$ in a translation invariant way, we see that $\BB P[E_r^\ep(z;C)]$ does not depend on $z$. So, we can restrict attention to the case when $z = 0$.
Let $\wt E_r^\ep(C)$ be define in the same manner as the event $E_r^\ep(0;C)$ of~\eqref{eqn-compare-event}, but with the ordinary LFPP metric $D_h^\ep$ instead of the truncated LFPP metric $\wh D_h^\ep$. The reason we want to look at $D_h^\ep$ is the scaling property in~\eqref{eqn-use-lfpp-scale} below.
 
Due to the uniform comparison between the fields $\wh h_\ep^*$ and $h_\ep^*$ involved in the definitions of $\wh D_h^\ep$ and  $D_h^\ep$, (Lemma~\ref{lem-localized-approx}), it suffices to show that for each $p\in (0,1)$, there exists $C = C(p,\xi) >0 $ such that
\eqb \label{eqn-compare-event-show}
\BB P\left[ \wt E_r^\ep(C)  \right] \geq p ,\quad  \forall 0 < \ep \leq r \leq 1 .
\eqe 

To prove~\eqref{eqn-compare-event-show}, we first use tightness across scales (Axiom~\ref{item-metric-coord}) to find $C    > 0$ as in the lemma statement such that for each $r\in (0,1]$, it holds with probability at least $(1+p)/2$ that
\eqb \label{eqn-compare-lqg}
D_h\left( \text{across $\BB A_{r,2r}(0)$} \right)  \geq C^{-1} \frk c_r e^{\xi h_r(0)} \quad \text{and} \quad
D_h\left( \text{around $\BB A_{2r,3r}(0)$} \right)  \leq C \frk c_r e^{\xi h_r(0)}  .
\eqe 

To lower-bound the probabilities of the conditions for $\frk a_\ep^{-1} D_h^\ep$ in the definition of $\wt E_r^\ep(C)$, we write $h^r = h(r\cdot) - h_r(0)$, which has the same law of $h$. A simple scaling calculation using the definitions of $h_\ep^*$ and $D_h^\ep$ shows that
\eqb  \label{eqn-use-lfpp-scale}
\frk a_\ep^{-1}  D_h^\ep(r u , r v) 
= \frac{r \frk a_{\ep/r}}{\frk a_\ep} e^{ \xi h_r(0)}   \times   \frk a_{\ep/r}^{-1} D_{h^r}^{\ep /r} (u,v)  ,
 \quad \forall u , v \in \BB C ; 
\eqe  
see~\cite[Lemma 2.6]{lqg-metric-estimates}.
Since $h^r \eqD h$, we can use the tightness of crossing distances for LFPP~\cite[Proposition 4.1]{dg-supercritical-lfpp} to get that, after possibly increasing $C$, for each $0 < \ep \leq r \leq 1$, it holds with probability at least $(1+p)/2$ that
\eqb \label{eqn-compare-lfpp0}
\frk a_{\ep/r}^{-1} D_{h^r}^{\ep /r}\left( \text{across $\BB A_{1,2}(0)$} \right) \geq C^{-1}  \quad \text{and} \quad
\frk a_{\ep/r}^{-1} D_{h^r}^{\ep /r}\left( \text{around $\BB A_{2,3}(0)$} \right) \leq C .
\eqe 
By~\eqref{eqn-use-lfpp-scale} and~\eqref{eqn-compare-lfpp0}, for each $0 < \ep \leq r \leq 1$, it holds with probability at least $(1+p)/2$ that
\eqb \label{eqn-compare-lfpp}
\frk a_\ep^{-1}  D_h^\ep\left( \text{across $\BB A_{r,2r}(0)$} \right) \geq C^{-1} \frac{r \frk a_{\ep/r}}{\frk a_\ep} e^{ \xi h_r(0)}    \quad \text{and} \quad
\frk a_\ep^{-1}  D_h^\ep\left( \text{around $\BB A_{2r,3r}(0)$} \right) \leq C \frac{r \frk a_{\ep/r}}{\frk a_\ep} e^{ \xi h_r(0)}  .
\eqe

Combining~\eqref{eqn-compare-lqg} with~\eqref{eqn-compare-lfpp} gives~\eqref{eqn-compare-event-show}. 
\end{proof}

\subsection{Comparing LFPP and LQG distances using distances at smaller scales}
\label{sec-scales}
  
Let us now define the set of radii which we will consider when we apply Lemma~\ref{lem-annulus-iterate}. 
Fix $\zeta \in (0,1)$ and for $\ep \in (0,1)$, let 
\eqbn
N^\ep := \lfloor \frac{\zeta}{2} \log_{10}\ep^{-1} \rfloor -10 .
\eqen
Let
\eqb \label{eqn-radius-set}
\mcl R^\ep := \left\{ 10^{-j} \ep^{1-\zeta} : j = 0,\dots, N^\ep -1 \right\}  \subset \left[  \ep^{1-\zeta/2} , \frac{1}{100} \ep^{1-\zeta}  \right]  .
\eqe 
We note that $ \#\mcl R^\ep =  N^\ep$ and for any $r,r'\in\mcl R^\ep$ with $r < r'$, we have $r'/r \geq 10$.  

The following lemma tells us that if $\wt{\mcl R}^\ep \subset\mcl R^\ep$ is a large enough subset, then with high probability we can compare $D_h$-distances and $\frk a_\ep^{-1} \wh D_h^\ep$-distances at scales larger than $\ep^{1-\zeta}$, up to a factor depending on the ratios $ r \frk a_{\ep/r} / ( \frk c_r \frk a_\ep ) $ for $r \in \wt{\mcl R}^\ep$. 

\newcommand{\Csub}{{C_4}}

\begin{lem} \label{lem-compare-subset}
There exists $\Csub > 0$, depending on $\zeta$ and the law of $D_h$, such that the following is true.
Let $U\subset \BB C$ be a deterministic, connected, bounded open set.
Also let $\ep\in (0,1)$ and let $\wt{\mcl R}^\ep \subset \mcl R^\ep$ be a deterministic subset with $\# \wt{\mcl R}^\ep \geq N^\ep/100$. 
It holds with polynomially high probability as $\ep\rta 0$, at a rate depending only on $U,\zeta$, and the law of $D_h$, that the following is true. 
For each $z,w\in U$, we have
\eqb \label{eqn-compare-upper}
\frk a_\ep^{-1} \wh D_h^\ep\left( B_{ \ep^{1-\zeta}}(z) , B_{  \ep^{1-\zeta}}(w) ; B_{ \ep^{1-\zeta}}(U) \right) 
\leq \Csub \left(\max_{r \in \wt{\mcl R}^\ep } \frac{r \frk a_{\ep/r}}{\frk c_r \frk a_\ep} \right) D_h(z,w ; U) 
\eqe 
and
\eqb \label{eqn-compare-lower}
D_h \left( B_{ \ep^{1-\zeta}}(z) , B_{ \ep^{1-\zeta}}(w) ; B_{ \ep^{1-\zeta}}(U) \right) 
\leq \Csub \left(\min_{r \in \wt{\mcl R}^\ep } \frac{r \frk a_{\ep/r}}{\frk c_r \frk a_\ep} \right)^{-1}  \frk a_\ep^{-1} \wh D_h^\ep(z,w ; U) .
\eqe 
\end{lem}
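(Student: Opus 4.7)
The proof adapts the multi-scale chaining argument of~\cite[Section 4]{local-metrics} (see also~\cite[Lemma 2.23]{pfeffer-supercritical-lqg}): use the events $E_r^\ep(z;C)$ of Section~\ref{sec-event} to simultaneously control both metrics at every scale, then replace a near-geodesic for one metric by a path of comparable length for the other by rerouting around ``good'' annuli.

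\textbf{Probability estimate.} Fix $p\in(0,1)$ close to $1$ and let $C=C(p)$ be as in Lemma~\ref{lem-compare-event-prob}. Every $r\in\mcl R^\ep$ satisfies $r\geq\ep^{1-\zeta/2}\gg\ep(\log\ep^{-1})^q$ as $\ep\rta 0$, so by Axiom~\ref{item-metric-local} together with~\eqref{eqn-localized-property} the event $E_r^\ep(z;C)$ is a.s.\ measurable with respect to $h|_{\BB A_{r/2,4r}(z)}$ viewed modulo additive constant. Since consecutive elements of $\wt{\mcl R}^\ep$ have ratio at most $1/10$, Lemma~\ref{lem-annulus-iterate} yields, for each fixed $z$,
\eqbn
\BB P\!\left[E_r^\ep(z;C)\text{ fails for every }r\in\wt{\mcl R}^\ep\right]\leq c\exp(-a\,\#\wt{\mcl R}^\ep),
\eqen
with $a$ as large as desired once $p$ (hence $C$) is taken large enough. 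Because $\#\wt{\mcl R}^\ep\gtrsim\log\ep^{-1}$, this bound is polynomial in $\ep$, and a union bound over a grid of mesh $\ep^{1-\zeta/2}/100$ covering $B_{2\ep^{1-\zeta}}(U)$ shows that with polynomially high probability every grid point $z$ admits a good scale $r(z)\in\wt{\mcl R}^\ep$ such that $E_{r(z)}^\ep(z;C)$ occurs. In particular the family $\{B_{r(z)}(z)\}_z$ covers a neighborhood of $U$.

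\textbf{Exchange between metrics.} To prove~\eqref{eqn-compare-upper}, fix $z,w\in U$, let $P$ be a near-$D_h$-geodesic in $U$ from $z$ to $w$, and build a $\wh D_h^\ep$-path from $B_{\ep^{1-\zeta}}(z)$ to $B_{\ep^{1-\zeta}}(w)$ iteratively. At each step the current point lies in a good ball $B_{r(z')}(z')$; follow $P$ until it first exits $B_{2r(z')}(z')$, then replace the traversed arc by a near-shortest $\wh D_h^\ep$-loop around the annulus $\BB A_{2r(z'),3r(z')}(z')$, connected to the arc endpoints by short paths within the annulus. The event $E_{r(z')}^\ep(z';C)$ simultaneously lower-bounds the $D_h$-length consumed by $P$ during the crossing by $C^{-1}\frk c_{r(z')}e^{\xi h_{r(z')}(z')}$ and upper-bounds the $\wh D_h^\ep$-length of the detour by $C\frac{r(z')\frk a_{\ep/r(z')}}{\frk a_\ep}e^{\xi h_{r(z')}(z')}$, so the local exchange rate is at most $C^2\max_{r\in\wt{\mcl R}^\ep}\frac{r\frk a_{\ep/r}}{\frk c_r\frk a_\ep}$. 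Summing over detours selected so as to cover $P$ with bounded multiplicity (a Vitali-type selection from the good balls) yields~\eqref{eqn-compare-upper} with $\Csub$ depending only on $C$ and $\zeta$. The lower bound~\eqref{eqn-compare-lower} follows by the symmetric construction starting from a $\wh D_h^\ep$-near-geodesic and using the remaining two inequalities in~\eqref{eqn-compare-event} (the $\wh D_h^\ep$-across lower bound and the $D_h$-around upper bound).

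\textbf{Main obstacle.} The principal technical effort is in the combinatorial/geometric step: the good scales $r(z')$ are random and spatially varying, so one must select the detour centers so that consecutive good annuli fit together to cover a near-geodesic with bounded overlap, check that consecutive detours connect through the circling arcs, and ensure the initial and final portions of $P$ are absorbed into the target balls $B_{\ep^{1-\zeta}}(z),B_{\ep^{1-\zeta}}(w)$ (which is guaranteed by $\max\wt{\mcl R}^\ep\leq\ep^{1-\zeta}/100$). These details largely parallel~\cite[Section 4]{local-metrics}; the only substantive adaptation is that the ``ratio'' between the two metrics is scale-dependent, namely $r\frk a_{\ep/r}/(\frk c_r\frk a_\ep)$, and this is handled uniformly by passing to the maximum (resp.\ minimum) over $r\in\wt{\mcl R}^\ep$ in the final estimate.
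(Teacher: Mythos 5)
Your proposal is correct in substance and tracks the paper's proof closely: you use Lemma~\ref{lem-compare-event-prob}, Lemma~\ref{lem-annulus-iterate}, and a union bound over a grid to get a high-probability event on which every grid point has a good scale $r\in\wt{\mcl R}^\ep$; you then reroute a $D_h$-near-geodesic $P$ around annuli of the form $\BB A_{2r,3r}(u)$ on which $E_r^\ep(u;C)$ holds, trading the $D_h$-cost of crossing $\BB A_{r,2r}(u)$ for the $\wh D_h^\ep$-cost of circling $\BB A_{2r,3r}(u)$. This is exactly the paper's strategy (Lemma~\ref{lem-compare-iterate} plus the chaining argument around~\eqref{eqn-bilip-path}--\eqref{eqn-bilip-sum}).

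One detail worth flagging, since it's the spot where your sketch as written doesn't quite close: you define the stopping time as the first exit of $P$ from $B_{2r(z')}(z')$ and then propose to join the detour loop to the arc endpoints by ``short paths within the annulus.'' The event $E_r^\ep(\cdot;C)$ controls only the $\wh D_h^\ep$-distance \emph{around} $\BB A_{2r,3r}$ and the $D_h$-distance \emph{across} $\BB A_{r,2r}$ (and their counterparts), not the $\wh D_h^\ep$-cost of arbitrary radial connectors inside the annulus, so these connecting paths are not a priori cheap. The paper avoids them altogether by taking $t_{k+1}$ to be the first time $P$ \emph{hits the loop} $\pi_k$ (which must happen, since $\pi_k$ separates the inner and outer boundaries of $\BB A_{2r_k,3r_k}(u_k)$). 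This still forces $P$ to cross $\BB A_{r_k,2r_k}(u_k)$, giving the same $D_h$-lower bound, but now the constructed path is simply $\bigcup_k\pi_k$, with no connectors needed. Connectedness of (a minimal subcollection of) $\bigcup_k\pi_k$ is then established by a purely topological argument (Lemma~\ref{lem-connected}), rather than by a Vitali-type bounded-overlap selection. You correctly identify this gluing step as the main obstacle; the paper's stopping rule is the device that makes it tractable.
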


The statement of Lemma~\ref{lem-compare-subset} is similar to the statement of Theorem~\ref{thm-lqg-lfpp}, except that in Lemma~\ref{lem-compare-subset} our estimates have an extra factor which depends on the ratios $  r \frk a_{\ep/r}  (\frk c_r \frk a_\ep)$ for $r\in\wt{\mcl R}^\ep$. In Section~\ref{sec-lqg-lfpp}, we will deduce Theorem~\ref{thm-lqg-lfpp} from Lemma~\ref{lem-compare-subset} by finding a choice of $\wt{\mcl R}^\ep$ for which these ratios are of constant order.

The proof of Lemma~\ref{lem-compare-subset} is similar to the proof in~\cite[Section 4]{local-metrics} that two metrics coupled with the GFF which satisfy certain conditions are bi-Lipschitz equivalent. We will first use Lemma~\ref{lem-annulus-iterate} to find lots of points $z$ and radii $r\in \wt{\mcl R}^\ep$ for which $E_r^\ep(z;C)$ occurs (Lemma~\ref{lem-compare-iterate}). By the definition~\eqref{eqn-compare-event} of $E_r^\ep(z;C)$, for each such $z$ and $r$,
\eqb \label{eqn-compare-subset-ineq}
\frk a_\ep^{-1} \wh D_h^\ep \left( \text{around $\BB A_{2r,3r}(z)$} \right) \leq C^2 \frac{r  \frk a_{\ep/r}}{\frk c_r \frk a_\ep} D_h\left( \text{across $\BB A_{r,2r}(z)$} \right)   
\eqe 
and a similar inequality holds with the roles of $\frk a_\ep^{-1 }\wh D_h^\ep$ and $D_h$ interchanged. To prove~\eqref{eqn-compare-upper}, we will consider a $D_h$-geodesic $P$. We will then string together paths around annuli of the form $\BB A_{2r,3r}(z)$ which intersect $P$ in order to produce a path with approximately the same endpoints as $P$. Using ~\eqref{eqn-compare-subset-ineq}, we can arrange that the $\frk a_\ep^{-1} \wh D_h^\ep$-length of this new path is bounded above in terms of the $D_h$-length of $P$. The bound~\eqref{eqn-compare-lower} is proven via a similar argument with the roles of $\frk a_\ep^{-1 }\wh D_h^\ep$ and $D_h$ interchanged.

\newcommand{\Citerate}{{C_5}}

\begin{lem} \label{lem-compare-iterate}
Assume the setup of Lemma~\ref{lem-compare-subset}. 
There exists $\Citerate  > 0$, depending only on $\zeta$ and the law of $D_h$, such that with polynomially high probability as $\ep\rta 0$, at a rate depending only on $U,\zeta$, and the law of $D_h$, that the following is true. 
For each $u \in \left( \frac{\ep}{100} \BB Z^2 \right) \cap B_1(U)$, there exists $r \in \wt{\mcl R}^\ep$ such that the event $E_r^\ep(u;\Citerate)$ occurs. 
\end{lem}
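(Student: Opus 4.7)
The plan is to apply Lemma~\ref{lem-annulus-iterate} (with Lemma~\ref{lem-compare-event-prob} supplying the single-scale input) separately at each lattice point $u$, and then take a union bound over the $O(\ep^{-2})$ points $u \in (\ep/100)\BB Z^2 \cap B_1(U)$ (the implicit constant depending on $U$).

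The key technical step is to check that $E_r^\ep(u;C)$ has the locality and shift-invariance structure demanded by Lemma~\ref{lem-annulus-iterate}. Each of the four quantities in~\eqref{eqn-compare-event} is unchanged when $h$ is replaced by $h+c$: by Axiom~\ref{item-metric-f} and~\eqref{eqn-localized-weyl}, both $D_h$ and $\wh D_h^\ep$ scale by $e^{\xi c}$, which cancels the shift in $e^{\xi h_r(u)}$. By Axiom~\ref{item-metric-local} the $D_h$ quantities are determined by $h|_{\BB A_{r,3r}(u)}$, and by~\eqref{eqn-localized-property} the $\wh D_h^\ep$ quantities are determined by $h$ on the $\ep(\log\ep^{-1})^q$-neighborhood of the same annulus. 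For $r \in \mcl R^\ep$ we have $r \geq \ep^{1-\zeta/2} \gg \ep(\log\ep^{-1})^q$ once $\ep$ is small, so the whole event depends only on $h|_{\BB A_{r/2,4r}(u)}$. Setting $\rho := 10 r$, this annulus equals $\BB A_{\rho/20,2\rho/5}(u)$, so $E_r^\ep(u;C) \in \sigma\bigl((h-h_\rho(u))|_{\BB A_{\rho/20,2\rho/5}(u)}\bigr)$, exactly the form needed for Lemma~\ref{lem-annulus-iterate} with $\mu_1 = 1/20$ and $\mu_2 = 2/5$.

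With locality secured, I would thin $\wt{\mcl R}^\ep$ by keeping every other element, obtaining $K \geq N^\ep/200 \asymp \log \ep^{-1}$ radii $r_1 > \cdots > r_K$. Consecutive elements of $\wt{\mcl R}^\ep$ already have ratio at most $1/10$ in $\mcl R^\ep$, so after thinning $r_{k+1}/r_k \leq 1/100 \leq \mu_1$, and the sequence $\rho_k = 10 r_k$ satisfies the same inequality. Fix any $B > 2$ (say $B = 3$) and pick $a$ in Lemma~\ref{lem-annulus-iterate} so large that $c e^{-aK} \leq \ep^B$ for all small $\ep$; this is possible since $K \asymp \log\ep^{-1}$. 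Lemma~\ref{lem-annulus-iterate} supplies the threshold $p = p(a,\mu_1,\mu_2) \in (0,1)$, and Lemma~\ref{lem-compare-event-prob} then produces a constant $\Citerate = \Citerate(p,\xi,D_h)$ such that $\BB P[E_r^\ep(u;\Citerate)] \geq p$ uniformly in $u$ and $r$. Thus for each fixed $u$, $E_{r_k}^\ep(u;\Citerate)$ fails for every $k \in [1,K]_{\BB Z}$ with probability at most $\ep^B$; a union bound over the $O(\ep^{-2})$ grid points gives overall failure probability $O(\ep^{B-2})$, which is polynomially small in $\ep$.

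The main obstacle is the locality verification. We must exploit the sharper $\ep(\log\ep^{-1})^q$ range of dependence of $\wh D_h^\ep$ introduced in Section~\ref{sec-localized-approx} (the $\ep^{1/2}$ range used in~\cite{lqg-metric-estimates} would be too coarse here) together with the lower bound $r \gtrsim \ep^{1-\zeta/2}$ built into $\mcl R^\ep$, so that the entire event is captured inside an annulus of bounded Euclidean aspect ratio around $u$. Once this separation is in place, the remainder of the argument is a routine multi-scale estimate followed by a union bound; no input beyond Lemmas~\ref{lem-annulus-iterate} and~\ref{lem-compare-event-prob} is needed.
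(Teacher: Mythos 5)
Your proposal is correct and follows essentially the same route as the paper: apply Lemma~\ref{lem-annulus-iterate} at each grid point with Lemma~\ref{lem-compare-event-prob} supplying the single-scale input, then union bound over the $O(\ep^{-2})$ grid points. One cosmetic difference: your choice $\mu_1 = 1/20$, $\mu_2 = 2/5$ (with $\rho_k = 10 r_k$) forces you to thin $\wt{\mcl R}^\ep$ by half to get $\rho_{k+1}/\rho_k \leq \mu_1$; the paper instead applies Lemma~\ref{lem-annulus-iterate} directly with the elements of $\wt{\mcl R}^\ep$, which works without thinning if one takes, say, $\mu_1 = 1/10$, $\mu_2 = 4/5$ and auxiliary radii $s_k = 5 r_k$, so that $\BB A_{\mu_1 s_k, \mu_2 s_k}(u) = \BB A_{r_k/2, 4 r_k}(u)$ exactly and $s_{k+1}/s_k \leq 1/10 = \mu_1$ already holds. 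Your thinning is harmless (it only costs a factor $2$ in $K$, which is absorbed into the choice of $a$) but is not necessary.
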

\begin{proof}
We have $\#\wt{\mcl R}^\ep \geq N^\ep/100 \geq \lfloor \frac{\zeta}{200} \log_{10} \ep^{-1} \rfloor$. Moreover, by~\eqref{eqn-radius-set}, if we list the elements of $\wt{\mcl R}^\ep$ in numerical order, then the ratio of any two consecutive elements is at least $10$. For each $r\in\wt{\mcl R}^\ep$, we have $r\geq \ep^{1-\zeta/2} \geq \ep (\log\ep^{-1})^q  $, so as explained just after~\eqref{eqn-compare-event}, the event $E_r^\ep(u;C)$ is a.s.\ determined by $h|_{\BB A_{r/2 , 4r}(u)}$, viewed modulo additive constants. By Lemma~\ref{lem-compare-event-prob}, for any $p\in (0,1)$ we can choose $C = C(p,\xi)>0$ such that $\BB P\left[ E_r^\ep(u;C) \right] \geq p$ for each $r\in\wt{\mcl R}^\ep$ and each $u\in\BB C$. From this and Lemma~\ref{lem-annulus-iterate} (applied with $K = \#\wt{\mcl R}^\ep$, the radii $r_k$ equal to the elements of $\wt{\mcl R}^\ep$, and $a $ equal to a large constant times $1/\zeta$), we find that there exists $\Citerate    > 0$ as in the lemma statement such that for each $u\in\BB C$,
\eqbn
\BB P\left[ \text{$\exists r\in \wt{\mcl R}^\ep$ such that $E_r^\ep(u;\Citerate)$ occurs} \right] = 1 - O_\ep(\ep^{100}) .
\eqen
We now conclude via a union bound over $O_\ep(\ep^2)$ elements of $\left( \frac{\ep}{100} \BB Z^2 \right) \cap B_1(U)$. 
\end{proof}

We now turn our attention to the proof of Lemma~\ref{lem-compare-subset}.
Let $\Citerate  > 0$ be as in Lemma~\ref{lem-compare-iterate}. Throughout the proof, we assume that the event of Lemma~\ref{lem-compare-iterate} occurs, which happens with polynomially high probability as $\ep\rta 0$. 
We will show (via a purely deterministic argument) that~\eqref{eqn-compare-upper} holds. The proof of~\eqref{eqn-compare-lower} is similar, with the roles of $\frk a_\ep^{-1} \wh D_h^\ep$ and $D_h$ interchanged.

To this end, fix distinct points $z,w\in U$ and let $P : [0,T] \rta U$ be a path in $U$ from $z$ to $w$ with $D_h$-length at most $2D_h(z,w;U)$. 
We assume that $P$ is parameterized by its $D_h$-length. 
We will build a path from $B_{ \ep^{1-\zeta}}(z)$ to $B_{ \ep^{1-\zeta}}(w)$ which approximates $P$ and whose $\frk a_\ep^{-1} \wh D_h^\ep$-length is bounded above.

\begin{figure}[ht!]
\begin{center}
\includegraphics[scale=1]{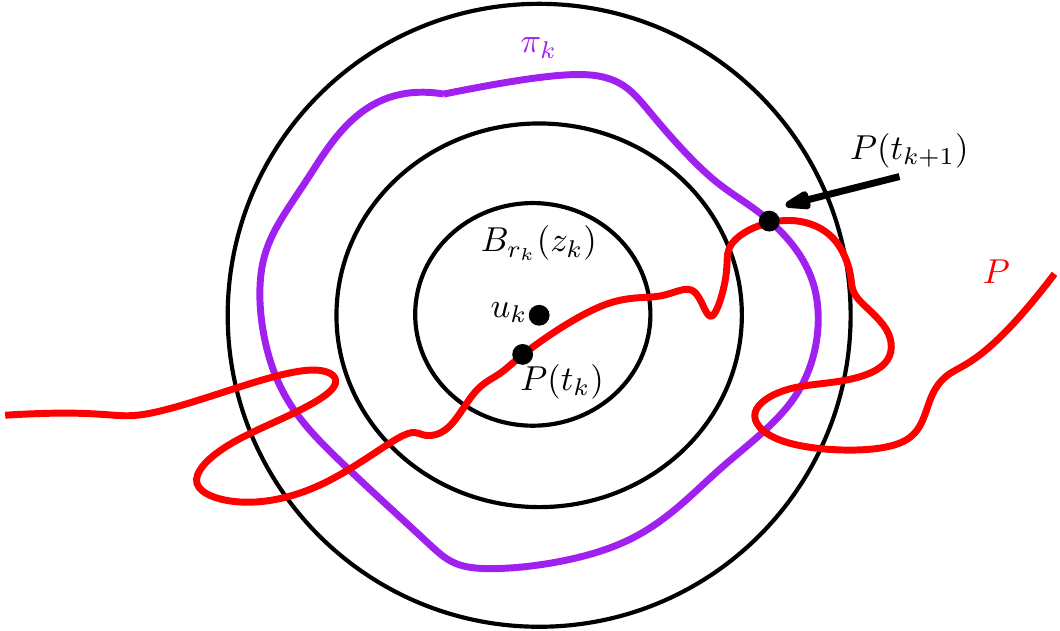} 
\caption{\label{fig-compare-subset} Illustration of the objects involved in one step of the iterative construction of the times $t_k$. The red path is a segment of $P$ and the two annuli in the figure are $\BB A_{r_k,2r_k}(u_k)$ and $\BB A_{2r_k,3r_k}(u_k)$. 
}
\end{center}
\end{figure}

To do this, we first inductively define a sequence of times $\{t_k\}_{k\in\BB N_0} \subset [0,T]$.
See Figure~\ref{fig-compare-subset} for an illustration of the definitions.
Let $t_0  = 0$. Inductively, assume that $k \in \BB N_0$ and $t_k$ has been defined. 
If $t_k = T$, we set $t_{k+1} = T$. 
Otherwise, we choose $u_k \in \left( \frac{\ep}{100} \BB Z^2 \right) \cap B_1(U)$ such that $P(t_k) \in B_\ep(u_k)$.
Since we are assuming that the event of Lemma~\ref{lem-compare-iterate} occurs, we can choose $r_k \in \wt{\mcl R}^\ep$ such that $E_{r_k}^\ep(u_k ; \Citerate)$ occurs.
By the definition~\eqref{eqn-compare-event} of $E_{r_k}^\ep(u_k ; \Citerate)$, there exists a path $\pi_k \subset \BB A_{2 r_k , 3 r_k}(u_k)$ which disconnects the inner and outer boundaries of this annulus such that
\eqb \label{eqn-bilip-path}
\left(\text{$\frk a_\ep^{-1} \wh D_h^\ep$-length of $\pi_k$} \right) \leq 2 \Citerate \frac{r_k  \frk a_{\ep/r_k}}{\frk a_\ep} e^{\xi h_{r_k}(u_k)} .
\eqe 
We can take $\pi_k$ to be a Jordan curve (i.e., a homeomorphic image of the circle). 
Let $t_{k+1}$ be the first time after $t_k$ at which the path $P$ hits $\pi_k$, or $t_{k+1} = T$ if no such time exists. 
Let
\eqbn
K := \max\{k\in\BB N: t_k < T\} .
\eqen
 
By the definition~\eqref{eqn-radius-set} of $\mcl R^\ep$, we have $r_k \geq \ep$ for each $k$, so by our choice of $u_k$ we have $P(t_k) \in B_{r_k}(u_k)$. Since $\pi_k \subset \BB A_{2 r_k , 3 r_k}(u_k)$, we see that if $k+1 \leq K$, then $P$ must cross between the inner and outer boundaries of $\BB A_{r_k , 2r_k}(u_k)$ between times $t_k$ and $t_{k+1}$. Since $P$ is parametrized by $D_h$-length and by~\eqref{eqn-compare-event},
\eqb \label{eqn-bilip-increment}
t_{k+1} - t_k \geq D_h\left(\text{across $\BB A_{r_k , 2r_k}(u_k)$} \right) \geq \Citerate^{-1} \frk c_{r_k} e^{\xi h_{r_k}(u_k)} , \quad \forall k  \leq  K -1 .
\eqe 
Therefore, 
\allb \label{eqn-bilip-sum}
\sum_{k=0}^{K-1} \left(\text{$\frk a_\ep^{-1} \wh D_h^\ep$-length of $\pi_k$} \right) 
&\leq \sum_{k=0}^{K-1} 2 \Citerate \frac{r_k  \frk a_{\ep/r_k}}{\frk a_\ep} e^{\xi h_{r_k}(u_k)} \quad \text{(by~\eqref{eqn-bilip-path})}\\
&\leq \sum_{k=0}^{K-1} 2 \Citerate^2 \frac{r_k  \frk a_{\ep/r_k}}{ c_{r_k} \frk a_\ep} (t_{k+1} - t_k)   \quad \text{(by~\eqref{eqn-bilip-increment})}\\
&\leq  2 \Citerate^2 \left( \sup_{r\in \wt{\mcl R}^\ep}  \frac{r  \frk a_{\ep/r }}{ c_{r } \frk a_\ep} \right) \sum_{k=0}^{K-1} (t_{k+1} - t_k)    \\
&\leq 4  \Citerate^2 \left( \sup_{r\in \wt{\mcl R}^\ep}  \frac{r  \frk a_{\ep/r }}{ c_{r } \frk a_\ep} \right) D_h(z,w;U) \quad \text{(by our choice of $P$)}   .
\alle

By definition, each of the paths $\pi_k$ for $k=0,\dots,K$ intersects $P$, which is contained in $U$, and has Euclidean diameter at most $6r_k \leq  \ep^{1-\zeta}$. 
Therefore,
\eqb \label{eqn-bilip-union}
\bigcup_{k=0}^{K-1} \pi_k \subset B_{ \ep^{1-\zeta}}(U) .
\eqe

In light of~\eqref{eqn-bilip-increment} and~\eqref{eqn-bilip-union}, to conclude the proof of~\eqref{eqn-compare-upper} (with $4\Citerate^2$ instead of $\Csub$) it remains to prove the following topological lemma.

\begin{lem} \label{lem-connected}
In the notation above, the union of the paths $\pi_k$ for $k=0,\dots,K-1$ contains a path from $B_{ \ep^{1-\zeta}}(z)$ to $B_{ \ep^{1-\zeta}}(w)$. 
\end{lem}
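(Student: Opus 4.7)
The plan is to combine a Jordan curve trichotomy with an inductive path construction. Let $J_k$ denote the bounded Jordan region enclosed by $\pi_k$, so that $B_{2r_k}(u_k) \subset J_k$ and $\op{diam}(\pi_k) \leq 6 r_k \leq 6\ep^{1-\zeta}/100$. Since $P(t_k) \in B_\ep(u_k) \subset J_k$ and $t_{k+1}$ is the first time after $t_k$ at which $P$ hits $\pi_k$, we have $P([t_k,t_{k+1}]) \subset \ol{J_k}$, and in particular $P(t_{k+1}) \in \pi_k$. Combined with $P(t_{k+1}) \in B_\ep(u_{k+1}) \subset J_{k+1}$, this yields $P(t_{k+1}) \in \pi_k \cap J_{k+1}$ for each $k \leq K-2$. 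Using the diameter bound together with $|u_0 - z| \leq \ep$ and $w \in \ol{J_{K-1}}$, we also get $\pi_0 \subset B_{\ep^{1-\zeta}}(z)$ and $\pi_{K-1} \subset B_{\ep^{1-\zeta}}(w)$.

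The key topological observation is the following trichotomy: for each $k \leq K-2$, either (A) $\pi_k \cap \pi_{k+1} \neq \emptyset$, or (B) $J_k \subset J_{k+1}$. Indeed, if $\pi_k$ is disjoint from $\pi_{k+1}$, then $\pi_k$ lies in a single component of $\BB C \setminus \pi_{k+1}$ by connectedness; the fact that $P(t_{k+1}) \in \pi_k \cap J_{k+1}$ forces $\pi_k \subset J_{k+1}$, and ruling out the (impossible) mutually-nested subcase via the Jordan curve theorem then gives $J_k \subset J_{k+1}$. Moreover, Case~(B) forces $r_{k+1} \geq 10 r_k$: the bounds $\op{diam}(\pi_k) \geq 4r_k$ and $\op{diam}(J_{k+1}) \leq 6r_{k+1}$, combined with the factor-of-$10$ spacing of $\wt{\mcl R}^\ep$, would otherwise be inconsistent.

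I then build the desired path by induction on $k$, maintaining $\gamma_k \subset \bigcup_{j \leq k} \pi_j$ as a path from a point in $B_{\ep^{1-\zeta}}(z)$ to a point on the ``outermost'' curve in the current nested structure (the $\pi_{j}$ with $j \leq k$ whose Jordan region is maximal under $\subset$). In Case~(A), extend $\gamma_k$ along an arc of the current outermost curve to an intersection with $\pi_{k+1}$, then onto $\pi_{k+1}$. In Case~(B), iterate the trichotomy to the pairs $(\pi_j, \pi_{k+1})$ for $j \leq k$: either some $\pi_j$ crosses $\pi_{k+1}$, giving a bridge via an arc of $\pi_{k+1}$ (combined with the existing chain in $\bigcup_{i \leq k}\pi_i$ connecting $\pi_j$ to $\gamma_k$'s endpoint), or every $\pi_j$ with $j \leq k$ is nested inside $J_{k+1}$. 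In the latter extreme case, $z \in J_0 \subset J_{k+1}$ and the diameter bound gives $\pi_{k+1} \subset B_{\ep^{1-\zeta}}(z)$, so $\gamma_{k+1}$ can be restarted from any point of $\pi_{k+1}$ while remaining in $B_{\ep^{1-\zeta}}(z)$. Taking $k = K-1$ produces a path terminating on $\pi_{K-1} \subset B_{\ep^{1-\zeta}}(w)$.

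The main obstacle is handling the nested Case~(B) cleanly: the cascade of nestings must be controlled by the diameter argument, and the extreme case (all prior $J_j$ engulfed by $J_{k+1}$) must be verified to keep $\pi_{k+1}$ within $B_{\ep^{1-\zeta}}(z)$, so that the path can be restarted without losing Euclidean proximity to $z$ or $w$.
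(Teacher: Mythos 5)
Your approach is genuinely different from the paper's. The paper's proof is non-constructive: it passes to a \emph{minimal} subcollection $\mcl K$ of indices whose Jordan regions $V_k$ still cover $P$, which (by minimality) has the property that no $V_k$ is properly contained in another; this immediately kills the nested case, and a short topological argument shows $\bigcup_{k\in\mcl K}\pi_k$ is connected. You instead attempt an explicit inductive construction along the consecutive sequence $\pi_0,\pi_1,\dots$, using the consecutive trichotomy $\pi_k\cap\pi_{k+1}\neq\emptyset$ or $J_k\subset J_{k+1}$. That trichotomy is correctly stated and correctly proved (it rests on $P(t_{k+1})\in\pi_k\cap J_{k+1}$), and it is a sound observation. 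However, as written the construction has several genuine gaps, and I do not think they are merely cosmetic.

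First, the side remark that Case~(B) forces $r_{k+1}\geq 10 r_k$ is wrong: the diameter bounds $\op{diam}(\pi_k)\geq 4r_k$ and $\op{diam}(J_{k+1})\leq 6r_{k+1}$ only give $r_{k+1}\geq \tfrac{2}{3}r_k$, hence $r_{k+1}\geq r_k$ after rounding to the $10$-adic grid; $r_{k+1}=r_k$ with $J_k\subsetneq J_{k+1}$ is perfectly possible. Second, and more importantly, ``iterating the trichotomy to the pairs $(\pi_j,\pi_{k+1})$ for $j\leq k$'' is not justified as stated: the trichotomy was proved using $P(t_{k+1})\in\pi_k\cap J_{k+1}$, which is a statement about the \emph{consecutive} pair, so for a non-consecutive $j$ the option ``$\pi_j$ lies entirely in the unbounded complementary component of $\pi_{k+1}$'' is not ruled out directly. (A descending induction on $j$ through the consecutive links does rescue the dichotomy ``either some $\pi_{j'}$ with $j\leq j'\leq k$ hits $\pi_{k+1}$, or $\pi_j\subset J_{k+1}$,'' but that argument needs to be spelled out.) Third, the inductive invariant is unclear: after a ``restart'' the path $\gamma_k$ no longer passes through all earlier $\pi_j$, so the claim that there is an ``existing chain in $\bigcup_{i\leq k}\pi_i$ connecting $\pi_j$ to $\gamma_k$'s endpoint'' is unsupported — you would need to track which $\pi_j$ remain connected to $\gamma_k$'s terminal component. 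Fourth, the terminal step is underjustified: $\gamma_{K-1}$ is built to end on the ``outermost'' curve in the nested structure, which need not be $\pi_{K-1}$, so ``terminating on $\pi_{K-1}\subset B_{\ep^{1-\zeta}}(w)$'' does not follow; you would need a separate argument that the outermost curve itself meets $B_{\ep^{1-\zeta}}(w)$ (for instance via $w=P(T)\in\ol{V_K}$ together with the diameter bound and the nesting). Finally, a small point on setup: the claim $w\in\ol{J_{K-1}}$ is not generally true; what one actually has is that $P(t_K)\in\pi_{K-1}$ lies within $6r_K$ of $w$, which together with $\op{diam}(\pi_{K-1})\leq 6r_{K-1}$ still gives $\pi_{K-1}\subset B_{\ep^{1-\zeta}}(w)$, but via a different justification. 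The paper's minimality trick avoids all of these difficulties at once, which is why it is the cleaner route here.
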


Indeed, once Lemma~\ref{lem-connected} is established,~\eqref{eqn-bilip-sum} implies that the $\frk a_\ep^{-1} \wh D_h^\ep$-length of the path from the lemma is at most $2  \Citerate^2 \left( \sup_{r\in \wt{\mcl R}^\ep}  \frac{r  \frk a_{\ep/r }}{ c_{r } \frk a_\ep} \right) D_h(z,w;U)$ and~\eqref{eqn-bilip-union} implies that the path from the lemma is contained in $B_{ \ep^{1-\zeta}}(U)$. Hence~\eqref{eqn-compare-upper} holds with $\Csub = 4\Citerate^2$. 

\begin{proof}[Proof of Lemma~\ref{lem-connected}]
For $k=0,\dots,K-1$, let $V_k$ be the open region which is disconnected from $\infty$ by the path $\pi_k$. Since $\pi_k$ is a Jordan curve, we have that $V_k$ is bounded and $\bdy V_k = \pi_k$. 
By construction, $P\subset \bigcup_{k=0}^{K-1} V_k$. 
Furthermore, the Euclidean diameter of each $V_k$ is at most $6r_k \leq  \ep^{1-\zeta}$. 
Let $\mcl K \subset [0,K-1]_{\BB Z}$ be a subset which is minimal in the sense that $ P \subset \bigcup_{k \in \mcl K} V_k$ and $P$ is not covered by any proper subcollection of the sets $V_k$ for $k\in\mcl K$. 

Since $ P $ is connected, it follows that $\bigcup_{k \in \mcl K} V_k$ is connected. Indeed, if this set had two proper disjoint open subsets, then each would have to intersect $ P $ (by minimality) which would contradict the connectedness of $ P $. 
Furthermore, by minimality, none of the sets $V_k$ for $k\in\mcl K$ is properly contained in a set of the form $V_{k'}$ for $k'\in\mcl K$. 

We claim that $\bigcup_{k\in\mcl K} \pi_k$ is connected. Indeed, if this were not the case then we could partition $\mcl K = \mcl K_1 \cup \mcl K_2$ such that $\mcl K_1$ and $\mcl K_2$ are non-empty and $\bigcup_{k\in\mcl K_1} \pi_k$ and $\bigcup_{k\in\mcl K_2} \pi_k$ are disjoint.  
By the minimality of $\mcl K$, it cannot be the case that any of the sets $V_{k'}$ for $k' \in\mcl K_2$ is contained in $\bigcup_{k\in\mcl K_1} V_k$. 
Furthermore, since $\bigcup_{k\in\mcl K_1} \pi_k$ and $\bigcup_{k\in\mcl K_2} \pi_k$ are disjoint, it cannot be the case that any set of the form $V_{k'}$ for $k' \in \mcl K_2$ intersects both $\bigcup_{k\in\mcl K_1} V_k$ and $\BB C\setminus \bigcup_{k\in\mcl K_1} V_k$: indeed otherwise $V_{k'}$ would have to intersect $\bdy V_k = \pi_k$ for some $k\in \mcl K_1$, which would mean that either $V_{k'} \supset V_k$ or $\pi_{k'}$ intersects $\pi_k$. The first case is impossible by the preceding paragraph and the second case is impossible by our choice of $\mcl K_1$ and $\mcl K_2$. 
Hence $V_{k'}$ must be entirely contained in $\BB C\setminus \bigcup_{k\in\mcl K_1} V_k$.
Therefore, $\bigcup_{k\in\mcl K_1} V_k$ and $\bigcup_{k\in\mcl K_2} V_k$ are disjoint.  
This contradicts the connectedness of $\bigcup_{k \in \mcl K} V_k$, and therefore gives our claim. 

Since $\bigcup_{k \in\mcl K} V_k$ contains $ P $, each of the sets $V_k$ has Euclidean diameter at most $ \ep^{1-\zeta}$, and $\bigcup_{k\in\mcl K} \pi_k$ is connected, it follows that $\bigcup_{k\in\mcl K} \pi_k$ contains a path from $B_{ \ep^{1-\zeta}}(z)$ to $B_{ \ep^{1-\zeta}}(w)$, as required (recall that $P(0) = z$ and $P(T) =w$). 
\end{proof}

\subsection{Up-to-constants comparison of LFPP and LQG distances}
\label{sec-lqg-lfpp}
 
In order to deduce Theorem~\ref{thm-lqg-lfpp} from Lemma~\ref{lem-compare-subset}, we need to produce a large subset of $\mcl R^\ep$ such that the ratios $ r \frk a_{\ep/r} / (\frk c_r \frk a_\ep ) $ for $r\in\mcl R^\ep$ are of constant order.
The existence of such a subset turns out to be a consequence of Lemma~\ref{lem-compare-subset}. 
Indeed, for fixed distinct points $z,w\in \BB C$ we know a priori that $\frk a_\ep^{-1} \wh D_h^\ep(z,w)$ and $D_h(z,w)$ are each typically of constant order. 
If there were a large number of scales $r\in\mcl R^\ep$ for which $  r \frk a_{\ep/r} / (\frk c_r \frk a_\ep) $ is very small, then Lemma~\ref{lem-compare-subset} would imply that $\frk a_\ep^{-1} \wh D_h^\ep(z,w)$ is typically much smaller than a small constant times $D_h(z,w)$, which is impossible. Similarly, there cannot be too many values of $r\in \mcl R^\ep$ for which $  r \frk a_{\ep/r}  / (\frk c_r \frk a_\ep )$ is very large. Hence this ratio must be of constant order for ``most" $r\in\mcl R^\ep$.  
Let us now make this reasoning precise.

\newcommand{\Cratio}{{C_6}}

\begin{lem} \label{lem-good-ratios}
There exists $\Cratio   > 1$, depending only on $\zeta$ and the law of $D_h$, such that for each $\ep \in (0,1)$, there are at least $N^\ep/2$ values of $r\in\mcl R^\ep$ such that 
\eqb \label{eqn-good-ratios}
\Cratio^{-1} \leq \frac{r \frk a_{\ep/r}}{\frk c_r \frk a_\ep} \leq \Cratio .
\eqe 
\end{lem}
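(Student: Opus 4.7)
The plan is to run the contradiction argument sketched in Section~\ref{sec-outline}, using Lemma~\ref{lem-compare-subset} together with a priori tightness estimates for $D_h$ and $\frk a_\ep^{-1}\wh D_h^\ep$ at macroscopic scales.

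First I would fix convenient data: take $z = 0$, $w = 1$, and $U = B_{10}(0)$. Using Axiom~\ref{item-metric-coord} (tightness across scales) together with stringing together paths around macroscopic annuli, one shows that $D_h(z,w;U)$ is tight away from $0$ and $\infty$. For the LFPP side, I would combine the scaling identity~\eqref{eqn-use-lfpp-scale} with the tightness of LFPP crossing distances from~\cite[Proposition~4.1]{dg-supercritical-lfpp} (and Lemma~\ref{lem-localized-approx} to pass from $D_h^\ep$ to $\wh D_h^\ep$) to conclude that $\frk a_\ep^{-1}\wh D_h^\ep(z,w;U)$ is also tight away from $0$ and $\infty$, uniformly in small $\ep$. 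Moreover, since for $\ep$ small the balls $B_{\ep^{1-\zeta}}(z)$ and $B_{\ep^{1-\zeta}}(w)$ are separated by the fixed annulus $\BB A_{1/4,1/2}(z)$ contained in $B_{\ep^{1-\zeta}}(U)$, the same argument (applied across this macroscopic annulus) shows that the internal distances
\eqbn
D_h\bigl(B_{\ep^{1-\zeta}}(z),B_{\ep^{1-\zeta}}(w);B_{\ep^{1-\zeta}}(U)\bigr),\qquad \frk a_\ep^{-1}\wh D_h^\ep\bigl(B_{\ep^{1-\zeta}}(z),B_{\ep^{1-\zeta}}(w);B_{\ep^{1-\zeta}}(U)\bigr)
\eqen
are both tight away from $0$. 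Consequently there exists $M>1$, depending only on $\xi$ and the law of $D_h$, such that for all sufficiently small $\ep$ each of the four events
\eqbn
\{D_h(z,w;U)\le M\},\ \{\frk a_\ep^{-1}\wh D_h^\ep(z,w;U)\le M\},\ \{D_h(\text{between small balls})\ge M^{-1}\},\ \{\frk a_\ep^{-1}\wh D_h^\ep(\text{between small balls})\ge M^{-1}\}
\eqen
has probability at least $0.99$.

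Next I set $\Cratio := 10\,\Csub M^2$ (with $\Csub$ as in Lemma~\ref{lem-compare-subset}) and argue by contradiction. Fix $\ep$ small, and suppose that fewer than $N^\ep/2$ values of $r\in\mcl R^\ep$ satisfy~\eqref{eqn-good-ratios}. Since the ratios $r\frk a_{\ep/r}/(\frk c_r\frk a_\ep)$ are deterministic, either (a) at least $N^\ep/4$ values of $r\in\mcl R^\ep$ satisfy $r\frk a_{\ep/r}/(\frk c_r\frk a_\ep)<\Cratio^{-1}$, or (b) at least $N^\ep/4$ satisfy $r\frk a_{\ep/r}/(\frk c_r\frk a_\ep)>\Cratio$. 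In case (a), let $\wt{\mcl R}^\ep$ be the deterministic set of such $r$; since $\#\wt{\mcl R}^\ep\ge N^\ep/4\ge N^\ep/100$, Lemma~\ref{lem-compare-subset} applied to $U$ gives with probability at least $0.99$ that
\eqbn
\frk a_\ep^{-1}\wh D_h^\ep\bigl(B_{\ep^{1-\zeta}}(z),B_{\ep^{1-\zeta}}(w);B_{\ep^{1-\zeta}}(U)\bigr)\le \Csub\,\Cratio^{-1} D_h(z,w;U).
\eqen
Intersecting with the tightness events above yields, with positive probability, $M^{-1}\le \Csub\Cratio^{-1}M$, i.e.\ $\Cratio\le \Csub M^2$, contradicting our choice of $\Cratio$. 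Case (b) is handled identically using~\eqref{eqn-compare-lower} with the roles of $D_h$ and $\frk a_\ep^{-1}\wh D_h^\ep$ swapped. This proves the lemma for all $\ep$ below some threshold $\ep_0$; for $\ep\in[\ep_0,1)$ the set $\mcl R^\ep$ is finite with parameters varying continuously in a compact range, so by enlarging $\Cratio$ if necessary we cover these $\ep$ trivially.

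The main obstacle I anticipate is cleanly establishing the a priori two-sided tightness of the internal distances between the small balls $B_{\ep^{1-\zeta}}(z)$ and $B_{\ep^{1-\zeta}}(w)$, uniformly in $\ep$. The upper bounds are easy (they are dominated by the point-to-point distance) but the lower bounds require quarantining the macroscopic annulus $\BB A_{1/4,1/2}(z)\subset B_{\ep^{1-\zeta}}(U)$ separating the two balls and invoking tightness of crossing distances across a fixed annulus, both for $D_h$ (from Axiom~\ref{item-metric-coord}) and for LFPP (via~\eqref{eqn-use-lfpp-scale} and~\cite[Proposition~4.1]{dg-supercritical-lfpp}); once this tightness is in hand, the rest of the argument is essentially a deterministic pigeonhole dressed up in the framework of Lemma~\ref{lem-compare-subset}.
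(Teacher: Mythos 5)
Your proof is correct and follows the same contradiction framework as the paper: partition into the sub-half of $\mcl R^\ep$ where the ratio is too small or too big, feed that subset into Lemma~\ref{lem-compare-subset}, and intersect the resulting comparison with a priori tightness estimates to get a bound on $\Cratio$.

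The one place where you diverge from the paper's execution is the choice of test configuration. You fix two points $z=0$, $w=1$ and a macroscopic separating annulus, which forces you to establish (i) an upper bound tightness estimate for the point-to-point internal distance $D_h(z,w;U)$ (respectively $\frk a_\ep^{-1}\wh D_h^\ep(z,w;U)$), and (ii) a lower bound tightness estimate for the ball-to-ball internal distance, the latter reduced to the crossing distance of the fixed separating annulus. Item (ii) is straightforward; item (i), the point-to-point upper bound, requires stringing together around-annulus paths across finitely many scales and is a bit of work. The paper instead applies the inequality from Lemma~\ref{lem-compare-subset} to points $z,w$ on the boundaries of the annulus $\BB A_{1,2}(0)$ (with $U = B_2(0)$) and observes that this directly converts the ball-to-ball versus point-to-point inequality into a comparison between crossing distances of two nearly identical fixed annuli; it then only needs lower tightness for the $D_h$-crossing (immediate from Axiom~\ref{item-metric-coord}) and upper tightness for the LFPP-crossing (immediate from~\cite[Proposition~4.1]{dg-supercritical-lfpp} and Lemma~\ref{lem-localized-approx}), with no point-to-point estimate needed. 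Both reductions are valid; the paper's is a little cleaner because both tightness inputs are single-annulus crossings rather than point-to-point distances. Your handling of $\ep\in[\ep_0,1)$ by enlarging $\Cratio$ also matches the paper's treatment, though you should phrase it as boundedness of $\frk a_\ep$ and $\frk c_r$ over compact ranges of $\ep$ and $r$ bounded away from $0$, rather than ``varying continuously.''
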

\begin{proof}
For any $\ep_0 > 0$, the scaling constants $\frk a_\ep$ for $\ep\in [\ep_0,1]$ are bounded above and below by constants depending only on $\ep_0$ and $\xi$ and the constants $\frk c_r$ for $r \in [\ep_0 , 1]$ are bounded above and below by constants depending only on $\ep_0$ and the law of $D_h$. Hence, we can choose $\Cratio  > 1$ depending only on $\ep_0,\zeta$, and the law of $D_h$ such that~\eqref{eqn-good-ratios} holds for all $\ep \in [\ep_0,1]$ and all $r\in [\ep,\ep^{1-\zeta}]$. 
Therefore, it suffices to find $\Cratio > 1$ as in the lemma statement such that the lemma statement holds for each small enough $\ep >0$ (how small depends on $\zeta $ and the law of $D_h$).  

For $T > 1$, let $\wt{\mcl R}_{T,+}^\ep$ (resp.\ $\wt{\mcl R}_{T,-}^\ep$) be the set of $r\in\mcl R^\ep$ such that $ r \frk a_{\ep/r} / (\frk c_r \frk a_\ep)  > T$ (resp.\ $ r \frk a_{\ep/r} / (\frk c_r \frk a_\ep ) < T^{-1}$). If the lemma statement does not hold with $\Cratio = T$, then either $\# \wt{\mcl R}_{T,+}^\ep \geq N^\ep/4$ or $\# \wt{\mcl R}_{T,-}^\ep \geq N^\ep/4$. Assume that $\# \wt{\mcl R}_{T,+}^\ep \geq N^\ep/4$ (the other case is treated similarly, with the roles of $D_h$ and $\frk a_\ep^{-1} \wh D_h^\ep$ interchanged). We will show that $T$ is bounded above by a constant depending on $\zeta $ and the law of $D_h$. 

By~\eqref{eqn-compare-lower} of Lemma~\ref{lem-compare-subset} applied with $\wt{\mcl R}^\ep = \wt{\mcl R}^\ep_{T,+}$ and $U = B_2(0)$, say, there exists $\Csub  > 0$ such that with polynomially high probability as $\ep\rta 0$, 
\eqb  \label{eqn-use-compare-subset0}
D_h \left( B_{ \ep^{1-\zeta}}(z) , B_{ \ep^{1-\zeta}}(w) ; B_{2+ \ep^{1-\zeta}}(0) \right) 
\leq \Csub T^{-1}  \frk a_\ep^{-1} \wh D_h^\ep(z,w ; B_2(0) ) ,\quad\forall z,w\in B_2(0) ,
\eqe 
which implies that
\eqb \label{eqn-use-compare-subset}
D_h \left( \text{across $\BB A_{1+\ep^{1-\zeta} , 2 - \ep^{1-\zeta}}(0)$}  \right) 
\leq \Csub T^{-1}  \frk a_\ep^{-1} \wh D_h^\ep\left( \text{across $\BB A_{1,2}(0)$}   \right) . 
\eqe
By tightness across scales (Axiom~\ref{item-metric-coord}), there exists $S  > 0$, depending only on the law of $D_h$, such that whenever $\ep < 1/100$, say, we have 
\eqb \label{eqn-lqg-tight}
\BB P\left[ D_h \left( \text{across $\BB A_{1+\ep^{1-\zeta} , 2 - \ep^{1-\zeta}}(0)$}  \right)   \geq S^{-1} \right] \geq \frac34 .
\eqe 
By~\cite[Proposition 4.1]{dg-supercritical-lfpp} and Lemma~\ref{lem-localized-approx}, after possibly increasing $S$ we can arrange that also 
\eqb \label{eqn-lfpp-tight}
\BB P\left[  \frk a_\ep^{-1} \wh D_h^\ep\left( \text{across $\BB A_{1,2}(0)$}   \right)   \leq S  \right] \geq \frac34 .
\eqe 

By combining~\eqref{eqn-use-compare-subset}, \eqref{eqn-lqg-tight}, and~\eqref{eqn-lfpp-tight}, we obtain that with probability at least $1/4 - o_\ep(1)$ (with the rate of convergence of the $o_\ep(1)$ depending only on $\zeta $ and the law of $D_h$), we have 
\eqbn
S^{-1} \leq \Csub T^{-1} S ,
\eqen
i.e., $T \leq \Csub S^2$. Hence, if $\ep$ is small enough so that $1/4 - o_\ep(1) > 0$, then $T \leq \Csub S^2$. Therefore, the lemma statement holds with $\Cratio = \Csub S^2$. 
\end{proof}

As a consequence of Lemma~\ref{lem-good-ratios}, we obtain a version of Theorem~\ref{thm-lqg-lfpp} with $\wh D_h^\ep$ in place of $D_h^\ep$. 

\newcommand{\Ctrunc}{{C_0}}
 
\begin{prop} \label{prop-lqg-lfpp-trunc}
For each $\zeta \in (0,1) $, there exists and $\Ctrunc > 0$, depending only on $\zeta$ and the law of $D_h$, such that the following is true. 
Let $U\subset \BB C$ be a deterministic, connected, bounded open. With polynomially high probability as $\ep\rta 0$, 
\eqb \label{eqn-compare-upper-trunc}
\frk a_\ep^{-1} \wh D_h^\ep\left( B_{ \ep^{1-\zeta}}(z) , B_{ \ep^{1-\zeta}}(w) ; B_{ \ep^{1-\zeta}}(U) \right) 
\leq \Ctrunc D_h(z,w ; U)  ,\quad \forall z,w\in U
\eqe 
and
\eqb \label{eqn-compare-lower-trunc}
D_h \left( B_{ \ep^{1-\zeta}}(z) , B_{ \ep^{1-\zeta}}(w) ; B_{ \ep^{1-\zeta}}(U) \right) 
\leq \Ctrunc \frk a_\ep^{-1} \wh D_h^\ep(z,w ; U) ,\quad \forall z,w \in U . 
\eqe 
\end{prop}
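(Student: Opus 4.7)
The plan is to simply combine Lemma~\ref{lem-compare-subset} with Lemma~\ref{lem-good-ratios}. The first lemma gives two-sided bounds between $\frk a_\ep^{-1} \wh D_h^\ep$ and $D_h$ (on the enlarged scale $\ep^{1-\zeta}$) involving the maximum and minimum, respectively, of the ratios $r \frk a_{\ep/r}/(\frk c_r \frk a_\ep)$ taken over a chosen deterministic subset $\wt{\mcl R}^\ep \subset \mcl R^\ep$ of cardinality at least $N^\ep/100$. The second lemma guarantees that there is a deterministic subset of $\mcl R^\ep$ on which these very ratios are pinched between $\Cratio^{-1}$ and $\Cratio$, and this subset already has cardinality at least $N^\ep/2 \geq N^\ep / 100$. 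So the two are designed to fit together.

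Concretely, first I would let $\wt{\mcl R}^\ep \subset \mcl R^\ep$ denote the subset of ``good'' radii produced by Lemma~\ref{lem-good-ratios}, namely those $r$ for which
\eqbn
\Cratio^{-1} \leq \frac{r \frk a_{\ep/r}}{\frk c_r \frk a_\ep} \leq \Cratio .
\eqen
Since the $\frk a_\ep$ are (deterministic) medians and the $\frk c_r$ are deterministic scaling constants for $D_h$, the set $\wt{\mcl R}^\ep$ is deterministic; and by Lemma~\ref{lem-good-ratios} we have $\#\wt{\mcl R}^\ep \geq N^\ep/2 \geq N^\ep/100$, which is the hypothesis of Lemma~\ref{lem-compare-subset}.

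Next I would apply Lemma~\ref{lem-compare-subset} to this $\wt{\mcl R}^\ep$. On the polynomially high probability event furnished by that lemma, the inequality~\eqref{eqn-compare-upper} becomes
\eqbn
\frk a_\ep^{-1} \wh D_h^\ep\left( B_{ \ep^{1-\zeta}}(z) , B_{  \ep^{1-\zeta}}(w) ; B_{ \ep^{1-\zeta}}(U) \right) \leq \Csub \Cratio\, D_h(z,w;U)
\eqen
for all $z,w \in U$, where the factor $\max_{r\in\wt{\mcl R}^\ep} \tfrac{r\frk a_{\ep/r}}{\frk c_r \frk a_\ep}$ has been absorbed into $\Cratio$. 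Likewise, the inequality~\eqref{eqn-compare-lower} becomes
\eqbn
D_h\left( B_{ \ep^{1-\zeta}}(z) , B_{  \ep^{1-\zeta}}(w) ; B_{ \ep^{1-\zeta}}(U) \right) \leq \Csub \Cratio\, \frk a_\ep^{-1} \wh D_h^\ep(z,w;U),
\eqen
since $(\min_{r\in\wt{\mcl R}^\ep} \tfrac{r\frk a_{\ep/r}}{\frk c_r \frk a_\ep})^{-1} \leq \Cratio$. Setting $\Ctrunc := \Csub \Cratio$ yields both displayed inequalities in the proposition. The constants $\Csub$ and $\Cratio$ depend only on $\zeta$ and the law of $D_h$, so the same is true of $\Ctrunc$.

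There is essentially no obstacle: the hard work has already been done in Lemmas~\ref{lem-compare-subset} and~\ref{lem-good-ratios}, and the only thing to verify is that the ``good'' subset from Lemma~\ref{lem-good-ratios} is in fact deterministic and large enough to feed into Lemma~\ref{lem-compare-subset}. Both points are immediate from the fact that $\frk a_\ep$ and $\frk c_r$ are deterministic constants and from the cardinality bound $N^\ep/2 \geq N^\ep/100$. The statement of Proposition~\ref{prop-lqg-lfpp-trunc} refers only to $\wh D_h^\ep$; the passage from $\wh D_h^\ep$ to the true LFPP metric $D_h^\ep$ (which is what is needed to obtain Theorem~\ref{thm-lqg-lfpp}) would be handled separately using Lemma~\ref{lem-localized-approx}, and is not part of this proposition.
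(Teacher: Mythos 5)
Your proof is correct and follows essentially the same route as the paper: pick $\wt{\mcl R}^\ep$ to be the set of good radii from Lemma~\ref{lem-good-ratios}, verify the cardinality bound, plug into Lemma~\ref{lem-compare-subset}, and absorb the max/min factors into $\Cratio$ to get $\Ctrunc = \Csub\Cratio$. Your explicit remark that $\wt{\mcl R}^\ep$ is deterministic (since both $\frk a_\ep$ and $\frk c_r$ are deterministic) is a correct and useful observation that the paper leaves implicit.
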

\begin{proof} 
Let $\Cratio$ be as in Lemma~\ref{lem-good-ratios} and let $\wt{\mcl R}^\ep$ be the set of $r\in \mcl R^\ep$ for which~\eqref{eqn-good-ratios} holds. By Lemma~\ref{lem-good-ratios}, we have $\#\wt{\mcl R}^\ep \geq N^\ep/2$, so we can apply Lemma~\ref{lem-compare-subset} to get that with polynomially high probability as $\ep\rta 0$, the bounds~\eqref{eqn-compare-upper} and~\eqref{eqn-compare-lower} hold for our above choice of $\wt{\mcl R}^\ep$. We then use~\eqref{eqn-good-ratios} to bound the maximum and minimum appearing in~\eqref{eqn-compare-upper} and~\eqref{eqn-compare-lower} in terms of $\Cratio$. This gives the proposition statement with $\Ctrunc = \Csub \Cratio$. 
\end{proof}

\begin{proof}[Proof of Theorem~\ref{thm-lqg-lfpp}]
This is immediate from Lemma~\ref{lem-localized-approx} and Proposition~\ref{prop-lqg-lfpp-trunc}. 
\end{proof}

\subsection{Bounds for scaling constants and bi-Lipschitz equivalence}
\label{sec-scaling}

In this subsection we will prove Theorems~\ref{thm-c_r}, \ref{thm-bilip}, and~\ref{thm-a_eps}. 

Theorem~\ref{thm-lqg-lfpp} provides non-trivial bounds relating $\frk a_\ep^{-1} \wh D_h^\ep(z,w)$ and $D_h(z,w)$ whenever $|z-w|$ is of larger order than $ \ep^{1-\zeta}$.
From this and the scaling properties of LFPP, we get bounds for the ratios $\frac{r \frk a_{\ep/r}}{\frk c_r \frk a_\ep}$ whenever $r $ is much larger than $ \ep^{1-\zeta}$.  
These bounds will be the main input in the proofs of Theorems~\ref{thm-c_r} and~\ref{thm-a_eps}. 

\newcommand{\Cuniform}{{C_7}}

\begin{lem} \label{lem-ratio-uniform}
There is a constant $\Cuniform  > 1$, depending only on $\zeta$ and the law of $D_h$, such that the following is true. For each $R \geq 1$, there exists $\ep_* = \ep_*(R,\zeta) > 0$ such that for each $\ep \in (0,\ep_*]$ and each $r\in [100\ep^{1-\zeta} , R]$,  
\eqb \label{eqn-ratio-uniform}
\Cuniform^{-1} \leq \frac{r \frk a_{\ep/r}}{\frk c_r \frk a_\ep} \leq \Cuniform .
\eqe 
\end{lem}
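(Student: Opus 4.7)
The plan is to pin down the ratio $\frac{r \frk a_{\ep/r}}{\frk c_r \frk a_\ep}$ by applying Theorem~\ref{thm-lqg-lfpp} (in the form of Proposition~\ref{prop-lqg-lfpp-trunc}) to compare $\frk a_\ep^{-1} \wh D_h^\ep$ and $D_h$ at Euclidean scale $r$, and matching this comparison against Axiom~\ref{item-metric-coord} for $D_h$ and the scaling relation~\eqref{eqn-use-lfpp-scale} combined with the tightness of LFPP crossings from~\cite[Proposition 4.1]{dg-supercritical-lfpp} for $\frk a_\ep^{-1}\wh D_h^\ep$. These tightness statements identify the typical orders of magnitude at scale $r$ as $\frk c_r e^{\xi h_r(0)}$ and $(r \frk a_{\ep/r}/\frk a_\ep)\, e^{\xi h_r(0)}$ respectively, so an up-to-constants comparison of the two metrics at scale $r$ forces these two typical sizes to agree up to constants. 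Given $R\geq 1$, I take $\ep_*=\ep_*(R,\zeta)$ small enough that the event of Proposition~\ref{prop-lqg-lfpp-trunc} applied with $U = B_{10R}(0)$ has probability at least $9/10$ for all $\ep\in (0,\ep_*]$; this is where the $R$-dependence of $\ep_*$ enters, while the final constant $\Cuniform$ is independent of $R$.

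Fix $r\in [100\ep^{1-\zeta}, R]$ and let $A^* = \BB A_{1+1/100,\,2-1/100}(0)$, so that $r A^* \subset \BB A_{r+\ep^{1-\zeta},\,2r-\ep^{1-\zeta}}(0) \subset \BB A_{r,2r}(0)$ when $r\geq 100\ep^{1-\zeta}$. Taking the infimum over $z\in\partial B_r(0)$ and $w\in\partial B_{2r}(0)$ in the bound from Proposition~\ref{prop-lqg-lfpp-trunc}, and using that any path from $B_{\ep^{1-\zeta}}(z)$ to $B_{\ep^{1-\zeta}}(w)$ must cross $\BB A_{r+\ep^{1-\zeta},2r-\ep^{1-\zeta}}(0)\supset rA^*$, together with the monotonicity of crossing distances under inclusion of annuli, one obtains
\[
\frk a_\ep^{-1}\wh D_h^\ep(\text{across $rA^*$}) \;\leq\; \Ctrunc\, D_h(\text{across $\BB A_{r,2r}(0)$}; U)
\]
and the reversed inequality in which the roles of $\frk a_\ep^{-1}\wh D_h^\ep$ and $D_h$ are exchanged. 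By Axiom~\ref{item-metric-coord} applied to the fixed annulus $A^*$, together with an explicit construction of a near-optimal crossing path of $\BB A_{r,2r}(0)$ staying inside $U$, there exists a constant $C_{\mathrm{LQG}}$ depending only on the law of $D_h$ such that with probability at least $9/10$,
\[
C_{\mathrm{LQG}}^{-1}\, \frk c_r\, e^{\xi h_r(0)} \;\leq\; D_h(\text{across $rA^*$}) \;\leq\; D_h(\text{across $\BB A_{r,2r}(0)$}; U) \;\leq\; C_{\mathrm{LQG}}\, \frk c_r\, e^{\xi h_r(0)} .
\]
The analogous chain for $\frk a_\ep^{-1}\wh D_h^\ep$, with $(r \frk a_{\ep/r}/\frk a_\ep)\,e^{\xi h_r(0)}$ in place of $\frk c_r\, e^{\xi h_r(0)}$, holds with probability at least $9/10$ by~\eqref{eqn-use-lfpp-scale} and the uniform tightness of LFPP crossings for $A^*$. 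Since all three events occur simultaneously with positive probability for $\ep \in (0,\ep_*]$, chaining the inequalities in both directions and cancelling the common factor $e^{\xi h_r(0)}$ yields the deterministic bounds $\Cuniform^{-1}\leq \frac{r\frk a_{\ep/r}}{\frk c_r \frk a_\ep} \leq \Cuniform$ with $\Cuniform = \Ctrunc\, C_{\mathrm{LQG}}\, C_{\mathrm{LFPP}}$, depending only on $\zeta$ and the law of $D_h$.

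The main technical obstacle is the upper bound on the internal crossing distance $D_h(\text{across $\BB A_{r,2r}(0)$}; U)$ by a constant multiple of $\frk c_r e^{\xi h_r(0)}$: Axiom~\ref{item-metric-coord} directly controls only unrestricted across and around distances, not internal distances with paths restricted to $U$. I will handle this by constructing an explicit path from $\partial B_r(0)$ to $\partial B_{2r}(0)$ inside $\BB A_{r/2, 4r}(0)\subset U$, obtained by concatenating an across-path from $\partial B_r(0)$ out to $\partial B_{3r}(0)$, a segment of an around-loop of $\BB A_{2r, 3r}(0)$, and an across-path back to $\partial B_{2r}(0)$, each of whose $D_h$-length is of order $\frk c_r e^{\xi h_r(0)}$ with positive probability by Axiom~\ref{item-metric-coord} applied to the relevant fixed annuli. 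A parallel argument using~\eqref{eqn-use-lfpp-scale} and the LFPP analogue of Axiom~\ref{item-metric-coord} from~\cite[Proposition 4.1]{dg-supercritical-lfpp} handles the internal LFPP crossing.
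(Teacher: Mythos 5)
Your proposal follows the same route as the paper's proof: apply Theorem~\ref{thm-lqg-lfpp} (or equivalently Proposition~\ref{prop-lqg-lfpp-trunc}) with $U$ a large Euclidean ball, specialize the internal-distance comparison to points on the boundaries of a concentric annulus at scale $r$ to get a comparison of across distances, and then pin down both across distances via Axiom~\ref{item-metric-coord} and the LFPP scaling relation~\eqref{eqn-use-lfpp-scale} together with~\cite[Proposition 4.1]{dg-supercritical-lfpp}, concluding on a positive-probability event where all the estimates hold simultaneously. The paper works with the slightly $\ep$-dependent annulus $\BB A_{r-\ep^{1-\zeta},\,2r+\ep^{1-\zeta}}(0)$ and sandwiches it between fixed multiples of $r$ when invoking tightness; your fixed rescaled annulus $rA^*$ accomplishes the same thing.

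One remark: the ``main technical obstacle'' you flag in the last paragraph is not actually an obstacle, and the across-plus-around-plus-across path construction you sketch is unnecessary. For any annulus $\BB A_{r_1,r_2}(0)$, a near-optimal Euclidean-continuous path from $\bdy B_{r_1}(0)$ to $\bdy B_{r_2}(0)$ (which exists since $D_h$ is a length metric, Axiom~\ref{item-metric-length}) can be restricted, by stopping it at its first hitting time of $\bdy B_{r_2}(0)$ and starting it at its last exit time from $\ol{B_{r_1}(0)}$ before that, to yield a path of no greater $D_h$-length that lies entirely in $\ol{\BB A_{r_1,r_2}(0)}$. Hence $D_h(\text{across }\BB A_{r_1,r_2}(0))$ already coincides with the version where paths are confined to the closed annulus, and in particular with the internal distance in any $U$ containing $\ol{\BB A_{r_1,r_2}(0)}$. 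So Axiom~\ref{item-metric-coord} does control the quantity you need directly, without any auxiliary path construction.
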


We emphasize the distinction between Lemmas~\ref{lem-good-ratios} and~\ref{lem-ratio-uniform}: the former gives bounds for the ratios $\frac{r \frk a_{\ep/r}}{\frk c_r \frk a_\ep}$ which hold for \emph{most} $r\in \mcl R^\ep \subset [\ep,\ep^{1-\zeta}]$ whereas the latter gives bounds for \emph{all} $r \in [100\ep^{1-\zeta}, 1]$.

\begin{proof}[Proof of Lemma~\ref{lem-ratio-uniform}]
We will find $\Cuniform$ and $\ep_*$ such that the upper bound in~\eqref{eqn-ratio-uniform} holds. The lower bound is obtained via a similar argument, with the roles of $\frk a_\ep^{-1} \wh D_h^\ep$ and $D_h$ interchanged.

Fix $R \geq  1$.  
By Theorem~\ref{thm-lqg-lfpp} applied with $U = B_{3R}(0)$, there exists $\Cmain > 0$, depending only on $\zeta$ and the law of $D_h$, such that with polynomially high probability as $\ep\rta 0$ (the rate of convergence depends on $R,\zeta$, and the law of $D_h$), we have
\eqbn
\frk a_\ep^{-1} \wh D_h^\ep\left( B_{ \ep^{1-\zeta}}(z) , B_{ \ep^{1-\zeta}}(w) ; B_{3R +  \ep^{1-\zeta}}(0) \right) 
\leq \Cmain  D_h(z,w ; B_{3R}(0) )  ,\quad \forall z,w\in B_{3R}(0) .
\eqen 
By applying this last inequality to points on the inner and outer boundaries of $\BB A_{r-\ep^{1-\zeta}, 2r+\ep^{1-\zeta}}(0)$, we get that with polynomially high probability as $\ep\rta 0$ that for each $r \in [100 \ep^{1-\zeta} , R]$, 
\eqb  \label{eqn-use-compare-upper'}
\frk a_\ep^{-1} \wh D_h^\ep\left(  \text{across $\BB A_{r   , 2r }(0)$} \right) 
\leq \Cmain  D_h \left(  \text{across $\BB A_{r -  \ep^{1-\zeta}   , 2r +  \ep^{1-\zeta}  }(0)$} \right)  . 
\eqe 

Using tightness across scales (Axiom~\ref{item-metric-coord}) and the tightness of LFPP crossing distances (\cite[Proposition 4.1]{dg-supercritical-lfpp}), as in the proof of Lemma~\ref{lem-compare-event-prob}, we find that there exists $S   > 0$, depending only on the law of $D_h$, such that for each $r\in [100\ep^{1-\zeta},R]$, 
\allb \label{eqn-ratio-uniform-compare}
\BB P\left[ \frk a_\ep^{-1} \wh D_h^\ep\left(  \text{across $\BB A_{r   , 2r }(z)$} \right) \geq S^{-1} \frac{r \frk a_{\ep/r}}{\frk a_\ep} e^{\xi h_r(0)} \right] &\geq \frac34 \quad \text{and} \notag\\
\BB P\left[ D_h \left(  \text{across $\BB A_{r -  \ep^{1-\zeta}   , 2r +  \ep^{1-\zeta}  }(z)$} \right)   \leq S \frk c_r e^{\xi h_r(0)} \right] &\geq \frac34  .
\alle

By combining~\eqref{eqn-use-compare-upper'} and~\eqref{eqn-ratio-uniform-compare}, we get that for each $r\in [100\ep^{1-\zeta},R]$, it holds with probability at least $1/4 - o_\ep(1)$ (with the rate of convergence depending only on $R, \zeta $ and the law of $D_h$) that 
\eqbn
S^{-1} \frac{r \frk a_{\ep/r}}{\frk a_\ep} \leq \Cmain S \frk c_r .
\eqen
Hence, if $\ep$ is small enough so that $1/4 - o_\ep(1) > 0$, then
\eqbn
\frac{r \frk a_{\ep/r}}{\frk c_r \frk a_\ep} \leq \Cmain S^2 .
\eqen
This gives the upper bound in~\eqref{eqn-ratio-uniform} with $\Cuniform = \Cmain S^2$. As noted above, the lower bound is proven similarly. 
\end{proof}

We will deduce our bounds for $\frk c_r$ and $\frk a_\ep$ (Theorems~\ref{thm-c_r} and~\ref{thm-a_eps}) from Lemma~\ref{lem-ratio-uniform} together with elementary deterministic arguments. 
For the proof of Theorem~\ref{thm-c_r}, we need the following classical lemma, which tells us that if a sequence $\{x_n\}_{n\in\BB N}$ is both subadditive and superadditive, up to a constant additive error, then $x_n/n$ converges to a limit and one can bound the rate of convergence. 

\begin{lem}[Subadditive rate lemma] \label{lem-subadd}
Let $\{x_n\}_{n\in\BB N}$ be a sequence of real numbers and assume that there is a $c > 0$ such that
\eqb  \label{eqn-subadd}
x_n + x_m - c \leq x_{n+m} \leq x_n + x_m + c ,\quad\forall n,m \in \BB N .
\eqe 
Then there is an $\alpha > 0$ such that 
\eqb  \label{eqn-subadd-rate}
|x_n / n - \alpha | \leq c / n ,\quad \forall n\in\BB N .
\eqe
\end{lem}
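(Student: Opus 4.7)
The plan is to carry out a Fekete-style argument, exploiting both inequalities in~\eqref{eqn-subadd} to get a two-sided control, and then read off the rate from an explicit iteration. Concretely, first I would prove by induction on $k \geq 1$ that for every $n,k \in \BB N$,
\eqb \label{eqn-subadd-iterate}
|x_{kn} - k x_n| \leq (k-1) c .
\eqe
The base case $k=1$ is trivial. For the inductive step, writing $x_{(k+1)n} = x_{kn + n}$ and using~\eqref{eqn-subadd} gives $|x_{(k+1)n} - x_{kn} - x_n| \leq c$, and the induction hypothesis $|x_{kn} - k x_n| \leq (k-1)c$ combines with this via the triangle inequality to yield $|x_{(k+1)n} - (k+1) x_n| \leq k c$. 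Dividing~\eqref{eqn-subadd-iterate} by $kn$ yields
\eqb \label{eqn-subadd-ratio}
\left| \frac{x_{kn}}{kn} - \frac{x_n}{n} \right| \leq \frac{(k-1) c}{k n} \leq \frac{c}{n} ,\quad \forall k,n\in\BB N .
\eqe

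Next I would show that the sequence $\{x_n/n\}_{n\in\BB N}$ is Cauchy. For any $m,n\in\BB N$, apply~\eqref{eqn-subadd-ratio} with the pair $(n,m)$ and then with $(m,n)$: comparing both to $x_{mn}/(mn)$ gives
\eqbn
\left| \frac{x_n}{n} - \frac{x_m}{m} \right|
\leq \left| \frac{x_n}{n} - \frac{x_{mn}}{mn} \right| + \left| \frac{x_{mn}}{mn} - \frac{x_m}{m} \right|
\leq \frac{c}{n} + \frac{c}{m} ,
\eqen
which tends to $0$ as $m,n\rta\infty$. Hence $\alpha := \lim_{n\rta\infty} x_n/n$ exists in $\BB R$.

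Finally, for the rate, fix $n$ and let $k\rta\infty$ in~\eqref{eqn-subadd-ratio}. Since $x_{kn}/(kn) \rta \alpha$ (as a subsequence of the convergent sequence $\{x_N/N\}$), the left-hand side converges to $|\alpha - x_n/n|$ while the right-hand side is bounded by $c/n$, giving~\eqref{eqn-subadd-rate}. The argument is essentially routine once the inductive iteration~\eqref{eqn-subadd-iterate} is in hand; no step is genuinely hard, but one has to be careful that the two-sided bound in~\eqref{eqn-subadd} (as opposed to mere subadditivity) is what allows both the existence of $\alpha$ as a \emph{finite} limit and the uniform rate $c/n$, rather than the weaker $\alpha = \inf_m x_m/m \in [-\infty,\infty)$ one obtains from pure Fekete.
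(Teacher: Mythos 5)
Your proof is correct and is essentially the same standard Fekete-with-two-sided-error argument that underlies the paper's cited reference (Steele, Lemma 1.9.1); the paper itself does not reproduce a proof, so there is no alternative route to compare against. One small point worth flagging: the hypothesis~\eqref{eqn-subadd} does not actually force $\alpha > 0$ (take $x_n = -n$, which satisfies~\eqref{eqn-subadd} with any $c>0$ and has $\alpha = -1$), so your conclusion that $\alpha$ exists in $\BB R$ is the correct one; the positivity asserted in the lemma statement is an artifact of the intended application (where $\alpha = \xi Q > 0$) rather than a consequence of the stated hypotheses.
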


Lemma~\ref{lem-subadd} follows from the proof of~\cite[Lemma 1.9.1]{steele} (applied with $x_n/c$ in place of $x_n$). The statement of~\cite[Lemma 1.9.1]{steele} gives $|x_n/n-\alpha| \leq c$ instead of~\eqref{eqn-subadd-rate}, but the proof shows that in fact~\eqref{eqn-subadd-rate} holds. 

We will also need a basic a priori estimate comparing the scaling constants $\frk c_r$ for different values of $r$.

\begin{lem} \label{lem-constant-a-priori}
For each $K > 1$, there exists $C > 1$, depending on $K$ and the law of $D_h$, such that $C^{-1} \frk c_r \leq \frk c_{r'} \leq C \frk c_r$ whenever $r>0$ and $r'\in [K^{-1} r , K r]$. 
\end{lem}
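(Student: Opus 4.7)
By Remark~\ref{remark-constant-multiple} it suffices to bound the ratios of the medians $m_r := \mathrm{median}\bigl(e^{-\xi h_r(0)} D_h(\text{around } \BB A_{r,2r}(0))\bigr)$. An elementary chain argument further reduces the lemma to the case $r' \in [r, 2r]$: given $r' \in [K^{-1}r, Kr]$, one splits the interval between $\log r$ and $\log r'$ into at most $\lceil \log_2 K\rceil + 1$ sub-intervals of length $\leq \log 2$ and multiplies the resulting bounds along the chain (the case $r' \leq r$ reduces to $r' \geq r$ by swapping the roles of $r$ and $r'$). So the task is to show $\frk c_{r'}/\frk c_r \in [C_0^{-1}, C_0]$ for every $r' \in [r, 2r]$, for some $C_0 > 0$ depending only on $\xi$ and the law of $D_h$.

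For the upper bound I would apply Axiom~\ref{item-metric-coord} to the two deterministic annuli $A_1 = \BB A_{3/2,\,2}$ and $A_2 = \BB A_{2,\,5/2}$, obtaining constants $M_1, M_2 > 0$, depending only on the law of $D_h$, such that with probability $\geq 15/16$ each,
\begin{equation*}
D_h\bigl(\text{around } \BB A_{3r/2,\,2r}(0)\bigr) \leq M_1 \frk c_r e^{\xi h_r(0)} \quad \text{and} \quad D_h\bigl(\text{around } \BB A_{2r,\,5r/2}(0)\bigr) \leq M_2 \frk c_r e^{\xi h_r(0)}.
\end{equation*}
The key geometric observation is that $\BB A_{3r/2,\,2r}(0) \subset \BB A_{r',\,2r'}(0)$ for $r' \in [r, 3r/2]$, with the boundary circles appropriately nested, so any loop separating the two components of $\partial \BB A_{3r/2,\,2r}(0)$ is automatically a loop around $\BB A_{r',\,2r'}(0)$; analogously for $A_2$ and $r' \in [3r/2, 2r]$. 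Together with the fact that $h_r(0) - h_{r'}(0)$ is Gaussian of variance $\log(r'/r) \leq \log 2$, so $e^{\xi(h_r(0) - h_{r'}(0))} \leq N$ with probability $\geq 15/16$ for some $N = N(\xi)$, intersecting the three events (total probability $\geq 13/16 > 1/2$) forces $e^{-\xi h_{r'}(0)} D_h(\text{around } \BB A_{r',\,2r'}(0)) \leq \max(M_1, M_2) N \frk c_r$ with probability at least $1/2$. Hence $m_{r'} \leq \max(M_1, M_2) N \frk c_r$, and Remark~\ref{remark-constant-multiple} gives $\frk c_{r'} \leq C_0 \frk c_r$.

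The matching lower bound $\frk c_{r'} \geq C_0^{-1} \frk c_r$ is obtained by the same recipe, this time using the tightness of the \emph{reciprocals} in Axiom~\ref{item-metric-coord} applied to the single annulus $A = \BB A_{1,\,4}$: because $\BB A_{r',\,2r'}(0) \subset \BB A_{r,\,4r}(0)$ with nested boundary circles, every loop around $\BB A_{r',\,2r'}(0)$ is also a loop around $\BB A_{r,\,4r}(0)$, so $D_h(\text{around } \BB A_{r',\,2r'}(0)) \geq D_h(\text{around } \BB A_{r,\,4r}(0))$, which is $\geq M_3^{-1} \frk c_r e^{\xi h_r(0)}$ with high probability by tightness of reciprocals; Gaussian absorption of $h_r(0) - h_{r'}(0)$ then gives $m_{r'} \geq C_0^{-1} \frk c_r$ by the same argument, and Remark~\ref{remark-constant-multiple} concludes.

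The main obstacle is the first step. A naive attempt to apply Axiom~\ref{item-metric-coord} with $A_s = \BB A_{s,\,2s}$ for $s = r'/r$ does not give bounds uniform in $s$, because the tightness constants in that axiom depend on $A$ and one has no a priori control on how they vary as $s$ ranges over the continuum $[1, 2]$. The remedy is to use a \emph{finite} collection of fixed deterministic annuli whose ``around'' loops simultaneously witness the desired bounds for every $r' \in [r, 2r]$; once this is set up, the Gaussian estimate for circle-average increments, Remark~\ref{remark-constant-multiple}, and the chain iteration are routine.
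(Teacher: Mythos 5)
Your proposal is correct and follows essentially the same route as the paper: in both cases the key step is to replace the continuum of annuli $\{s A : s \in [K^{-1},K]\}$ by a finite collection of fixed deterministic annuli to which Axiom~\ref{item-metric-coord} can be applied, then absorb the circle-average increment $h_r(0) - h_{r'}(0)$ by a Gaussian tail bound (the paper phrases this via the Brownian-motion representation of $t\mapsto h_{e^{-t}}(0)$), and finally invoke Remark~\ref{remark-constant-multiple}. The paper packages this slightly differently (it shows tightness of the sup and inf over $r'\in[K^{-1}r,Kr]$ and then applies Remark~\ref{remark-constant-multiple} once, rather than bounding each median $m_{r'}$ individually and chaining over dyadic sub-intervals), but the content is the same and your geometric nesting checks go through.
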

\begin{proof}
Fix a Euclidean annulus $A\subset\BB C$. We can find finitely many Euclidean annuli $A_1,\dots,A_k$ such that for each $s \in [K^{-1} , K]$, there exists $j\in [1,k]_{\BB Z}$ such that $s A$ is contained in $A_j$ and disconnects the inner and outer boundaries of $A_j$ (we make the aspect ratios of the $A_j$'s larger than the aspect ratio of $A$). Similarly, we can find finitely many Euclidean annuli $A_1',\dots,A_k'$ (whose aspect ratios will be smaller than the aspect ratio of $A$) such that for each $s \in [K^{-1} , K]$, there exists $j'\in [1,k]_{\BB Z}$ such that $A_{j'}'$ is contained in $sA$ and disconnects the inner and outer boundaries of $s A$. We have
\alb
&D_h(\text{around $A_j$}) \leq D_h(\text{around $s A$}) \leq D_h(\text{around $A_{j'}'$} ) 
\quad \text{and} \notag\\
&\qquad D_h(\text{across $A_{j'}'$}) \leq D_h(\text{across $s A$}) \leq D_h(\text{across $A_j$} ) .
\ale
From this and Axiom~\ref{item-metric-coord}, applied to each of the annuli $A_1,\dots,A_k$ and $A_1',\dots,A_k'$, we get that the random variables
\eqb \label{eqn-sup-tight}
\frk c_r^{-1} e^{-\xi h_r(0)} \sup_{r' \in [K^{-1} r , K r]} D_h(\text{around $r' A$}) 
\eqe 
are tight; and the same holds if we replace the sup by an inf and take the reciporicals of the random variables; and/or we replace ``across" by ``around". 
Furthermore, since $t\mapsto h_{e^{-t}}(0)$ is a standard Brownian motion (see the calculations in~\cite[Section 3.1]{shef-kpz}), we get that the random variables
\eqb \label{eqn-circle-avg-tight}
\sup_{r' \in [K^{-1} r , K r]} \exp\left( \xi |h_{r'}(0) - h_r(0)| \right) 
\eqe 
are tight. Combining~\eqref{eqn-sup-tight} and~\eqref{eqn-circle-avg-tight} shows that the random variables
\eqbn
\frk c_r^{-1}\sup_{r' \in [K^{-1} r , K r]}  e^{-\xi h_{r'}(0)}  D_h(\text{around $r' A$}) 
\eqen
are tight; and the same holds if we replace the sup by an inf and take the reciporicals of the random variables; and/or we replace ``across" by ``around". 
Consequently, Axiom~\ref{item-metric-coord} holds with $\frk c_r$ replaced by the scaling factors $\wt{\frk c}_r$ which equals $\frk c_{K^n}$ whenever $r \in [K^n, K^{n+1}]$. 
By Remark~\ref{remark-constant-multiple}, we get that there is a constant $C>1$ such that $C^{-1} \frk c_{K^n} \leq \frk c_r \leq C \frk c_{K^n}$ whenever $r \in [K^n ,K^{n+1}]$. This implies the lemma statement with $C^2$ in place of $C$. 
\end{proof}

\begin{proof}[Proof of Theorem~\ref{thm-c_r}]
Throughout the proof, we assume that all implicit constants in $\asymp$ depend only on the law of $D_h$.  
Let $r ,s > 0$. By three applications of Lemma~\ref{lem-ratio-uniform} applied with $\zeta = 1/2$, say, if $\ep $ is sufficiently small (depending on $r$ and $s$) then
\eqb  \label{eqn-submult}
\frk c_r \frk c_s \asymp \frac{r \frk a_{\ep/r}}{\frk a_\ep} \times \frac{s \frk a_{\ep / (s r)}}{\frk a_{\ep/r}} = \frac{s r \frk a_{\ep / (s r)}}{\frk a_\ep} \asymp \frk c_{s r} .
\eqe 

For $n\in\BB N$, write $x_n = \log \frk c_{2^{-n}}$. 
By taking $s$ and $r$ to be powers of 2 and taking the log of both sides of~\eqref{eqn-submult}, we obtain that $\{x_n\}_{n\in\BB N}$  satisfies~\eqref{eqn-subadd}. 
Therefore, Lemma~\ref{lem-subadd} implies that there exists $\alpha > 0$ such that $|x_n / n - \alpha | \leq c/n$ for all $n$, equivalently 
\eqbn
\frk c_{2^{-n}} \asymp 2^{-\alpha n} .
\eqen
 By~\cite[Proposition 4.2]{dg-supercritical-lfpp}, for $0 < \ep < r$, we have 
\eqbn
\frac{r \frk a_{\ep/r}}{  \frk a_\ep} = r^{\xi Q +o_r(1)} ,
\eqen
with the rate of the convergence of the $o_r(1)$ depending only on the law of $D_h$. By combining this with Lemma~\ref{lem-ratio-uniform}, we infer that $\alpha = \xi Q$. This gives~\eqref{eqn-c_r} when $r$ is a negative power of 2. 

The case of a general choice of $r\in (0,1]$ follows from the case when $r =2^{-n}$ together with Lemma~\ref{lem-constant-a-priori}. To treat the case when $r  > 1$, we apply~\eqref{eqn-submult} with $s = 1/r < 1$ to get
\eqbn
\frk c_r \asymp \frk c_1 / \frk c_{1/r} \asymp r^{\xi Q} .
\eqen  
\end{proof}

\begin{proof}[Proof of Theorem~\ref{thm-a_eps}]
Let $\ep_* > 0$ be as in Lemma~\ref{lem-ratio-uniform} with $\zeta = 1/4$ and $R=1$. By possibly shrinking $\ep_*$, we can arrange that also $100 \ep^{3/4} \leq \ep^{1/2}$ for each $\ep \in (0,\ep_*]$. Lemma~\ref{lem-ratio-uniform} (applied with $\zeta = 1/4$) combined with Theorem~\ref{thm-c_r} implies that there is a constant $A = A(\xi) > 1$ (in particular, $A = \Clqg \Cuniform$) such that if $0  <\ep \leq \ep_*$ and $r\in [\ep^{1/2} ,   1 ]$, then 
\eqb \label{eqn-use-ratio-uniform}
A^{-1} r^{\xi Q - 1} \leq \frac{ \frk a_{\ep/r}}{ \frk a_\ep} \leq A r^{\xi Q - 1} .
\eqe 
After possibly increasing $A$, we can remove the constraint that $\ep \leq \ep_*$.
  
For $k\in\BB N_0$, we apply~\eqref{eqn-use-ratio-uniform} with $\ep = 2^{-2^{ k}}$ and $r = 2^{-2^k}/\delta$ to find that
\eqb \label{eqn-a_eps-gen}
A^{-1} \delta^{1-\xi Q} 2^{(1-\xi Q) 2^k} \leq \frac{ \frk a_{\delta }}{ \frk a_{2^{-2^k}}} \leq A \delta^{1-\xi Q} 2^{(1-\xi Q) 2^k} ,\quad \forall \delta \in [2^{-2^k} , 2^{-2^{k-1}}] .
\eqe

In particular, taking $\delta = 2^{-2^{k-1}}$ gives
\eqb  \label{eqn-a_eps-k}
A^{-1}  2^{(1-\xi Q) 2^{k-1} } \leq \frac{ \frk a_{2^{-2^{k-1}}}}{ \frk a_{2^{-2^k}}} \leq A 2^{(1-\xi Q) 2^{k-1}}  .
\eqe
We apply this inequality with $j$ instead of $k$, then multiply over all $j = 1,\dots,k$ to get
\eqb  \label{eqn-a_eps-mult}
A^{-k}  2^{(1-\xi Q) (2^k-1) } \leq \frac{ \frk a_{1/2}}{ \frk a_{2^{-2^k}}} \leq  A^k 2^{(1-\xi Q) (2^k-1)}   .
\eqe
Re-arranging shows that there is a constant $C = C(\xi) > 0$ such that
\eqb \label{eqn-a_eps-bound-k}
C^{-1} A^{-k} 2^{-(1-\xi Q) 2^k} \leq  \frk a_{2^{-2^k}} \leq C A^k 2^{-(1-\xi Q) 2^k} ,\quad\forall k\in\BB N 
\eqe
where here we absorbed $\frk a_{1/2}$ into $C$. 

For a given $\delta\in (0,1/2]$, choose $k\in\BB N$ such that $\delta \in  [2^{-2^k} , 2^{-2^{k-1}}]$. Note that
\eqb \label{eqn-a_eps-log}
k \in [\log_2\log_2 \delta^{-1} , \log_2 \log_2 \delta^{-1} +1] .
\eqe 
By~\eqref{eqn-a_eps-gen} and~\eqref{eqn-a_eps-bound-k}, 
\eqb \label{eqn-a_eps-bound}
C^{-1} A^{-k-1} \delta^{1-\xi Q} \leq  \frk a_\delta \leq C A^{k+1} \delta^{1-\xi Q}  .
\eqe
By~\eqref{eqn-a_eps-log}, $A^{k+1}$ is bounded above by a $\xi$-dependent constant times $(\log\delta^{-1})^b$ for some $b = b(\xi) > 0$. 
Thus~\eqref{eqn-a_eps-bound} implies~\eqref{eqn-a_eps}.
\end{proof}

\begin{remark} \label{remark-polylog}
Our proof does not yield better than polylogarithmic upper and lower bounds for $\frk a_\ep / \ep^{1-\xi Q}$. 
Indeed, the estimate~\eqref{eqn-use-ratio-uniform} for $r\in [\ep^{1/2},1]$ is still satisfied, e.g., if $\frk a_\ep = (\log \ep^{-1})^b \ep^{1-\xi Q}$ for some $b\in\BB R$ (with the constant $A$ depending on $b$). 
In order to get better than polylogarithmic bounds, we would need to improve on~\eqref{eqn-use-ratio-uniform} so that either it holds for all $r \in [\phi(\ep) \ep , 1]$, where $\lim_{\ep\rta 0} \log \phi(\ep) / \log\ep = 0$; or so that it holds with $A$ replaced by something of order $1+o_\ep(1)$. 
Either of these improvements would require non-trivial new ideas.
\end{remark}

\begin{proof}[Proof of Theorem~\ref{thm-bilip}]
This can be deduced from Theorem~\ref{thm-c_r} and a generalization of the bi-Lipschitz equivalence criterion from~\cite[Theorem 1.6]{local-metrics}. However, we will instead give a more self-contained proof.

Let $U\subset\BB C$ be a deterministic, connected, bounded open set. 
We apply Theorem~\ref{thm-lqg-lfpp} (with $\zeta = 1/2$) to compare each of $D_h$ and $\wt D_h$ to $\frk a_\ep^{-1} \wh D_h^\ep$. We obtain that there is a deterministic constant $\Cbilip >0$, depending only on the laws of $D_h$ and $\wt D_h$, such that with probability tending to 1 as $\ep\rta 0$,  
\eqb \label{eqn-bilip-upper}
\wt D_h\left( B_{ \ep^{1/2}}(z) , B_{ \ep^{1/2}}(w) ; B_{ \ep^{1/2}}(U) \right) 
\leq \Cbilip D_h(z,w ; U)  ,\quad \forall z,w\in U 
\eqe 
and
\eqb \label{eqn-bilip-lower}
D_h \left( B_{ \ep^{1/2}}(z) , B_{ \ep^{1/2}}(w) ; B_{ \ep^{1/2}}(U) \right) 
\leq \Cbilip  \wt D_h(z,w ; U) ,\quad \forall z,w\in U .
\eqe 
In particular, $\Cbilip$ is the product of the constants appearing in Theorem~\ref{thm-lqg-lfpp} for $D_h$ and $\wt D_h$, respectively. 
Shrinking $\ep$ makes the conditions~\eqref{eqn-bilip-upper} and~\eqref{eqn-bilip-lower} stronger. Since these conditions hold with probability tending to 1 as $\ep\rta 0$, we infer that a.s.\ there is a random $\ep_* = \ep_*(U) > 0$ such that \eqref{eqn-bilip-upper} and~\eqref{eqn-bilip-lower} hold for each $\ep\leq \ep_*$. 

Now let $\{U_n\}_{n\in\BB N}$ be an increasing family of bounded open sets whose union is all of $\BB C$. From the preceding paragraph, we infer that a.s.\ there is a random sequence of positive numbers $\{\ep_n\}_{n\in\BB N}$ such that for each $n\in\BB N$, the conditions~\eqref{eqn-bilip-upper} and~\eqref{eqn-bilip-lower} hold with $U=U_n$ for each $\ep \leq \ep_n$. 

Almost surely, every $D_h$-bounded set is Euclidean-bounded~\cite[Lemma 3.12]{pfeffer-supercritical-lqg}. Consequently, it is a.s.\ the case that for any two distinct points $z,w\in \BB C$ which are non-singular for $D_h$, there exists $n\in\BB N$ such that every path from $z$ to $w$ whose $D_h$-length is at most $2D_h(z,w)$ is contained in $ U_n $. This implies that $D_h(z,w;U_n) = D_h(z,w)$. By combining this with the preceding paragraph, we find that for each $\ep \in (0,\ep_n]$, 
\eqbn
\wt D_h\left( B_{ \ep^{1/2}}(z) , B_{  \ep^{1/2}}(w)  \right) 
\leq \wt D_h\left( B_{ \ep^{1/2}}(z) , B_{  \ep^{1/2}}(w) ; B_{ \ep^{1/2}}(U_n) \right) 
\leq \Cbilip D_h(z,w ). 
\eqen
Since $\wt D_h$ is lower semicontinuous, if we take the liminf of the left side of this inequality as $\ep\rta 0$, we obtain $\wt D_h(z,w) \leq \Cbilip D_h(z,w)$. This holds for any two points $z,w\in \BB C$ which are non-singular for $D_h$. If either $z$ or $w$ is a singular point for $D_h$, then $D_h(z,w) = \infty$ so $\wt D_h(z,w) \leq \Cbilip D_h(z,w)$ vacuously. We thus obtain the upper bound in~\eqref{eqn-bilip}. The lower bound is obtained similarly, with the roles of $D_h$ and $\wt D_h$ interchanged.
\end{proof}

\bibliography{cibib}

\newcommand{\etalchar}[1]{$^{#1}$}
\def\cprime{$'$}
\begin{thebibliography}{DFG{\etalchar{+}}20}

\bibitem[Ang19]{ang-discrete-lfpp}
M.~Ang.
\newblock Comparison of discrete and continuum {L}iouville first passage
  percolation.
\newblock {\em Electron. Commun. Probab.}, 24:Paper No. 64, 12, 2019,
  \arxiv{1904.09285}. \MR{4029433}

\bibitem[Bee82]{beer-usc}
G.~Beer.
\newblock Upper semicontinuous functions and the {S}tone approximation theorem.
\newblock {\em J. Approx. Theory}, 34(1):1--11, 1982. \MR{647707}

\bibitem[{Ber}]{berestycki-lqg-notes}
N.~{Berestycki}.
\newblock {I}ntroduction to the {G}aussian {F}ree {F}ield and {L}iouville
  {Q}uantum {G}ravity.
\newblock {A}vailable at
  \url{https://homepage.univie.ac.at/nathanael.berestycki/articles.html}.

\bibitem[DDDF20]{dddf-lfpp}
J.~Ding, J.~Dub\'{e}dat, A.~Dunlap, and H.~Falconet.
\newblock Tightness of {L}iouville first passage percolation for {$\gamma \in
  (0,2)$}.
\newblock {\em Publ. Math. Inst. Hautes \'{E}tudes Sci.}, 132:353--403, 2020,
  \arxiv{1904.08021}. \MR{4179836}

\bibitem[DFG{\etalchar{+}}20]{lqg-metric-estimates}
J.~Dub\'{e}dat, H.~Falconet, E.~Gwynne, J.~Pfeffer, and X.~Sun.
\newblock Weak {LQG} metrics and {L}iouville first passage percolation.
\newblock {\em Probab. Theory Related Fields}, 178(1-2):369--436, 2020,
  \arxiv{1905.00380}. \MR{4146541}

\bibitem[DG18]{dg-lqg-dim}
J.~{Ding} and E.~{Gwynne}.
\newblock {The fractal dimension of {L}iouville quantum gravity: universality,
  monotonicity, and bounds}.
\newblock {\em {C}ommunications in {M}athematical {P}hysics}, 374:1877--1934,
  2018, \arxiv{1807.01072}.

\bibitem[DG20]{dg-supercritical-lfpp}
J.~{Ding} and E.~{Gwynne}.
\newblock {Tightness of supercritical Liouville first passage percolation}.
\newblock {\em {J}ournal of the {E}uropean {M}athematical {S}ociety}, to
  appear, 2020, \arxiv{2005.13576}.

\bibitem[DG21]{dg-critical-lqg}
J.~Ding and E.~Gwynne.
\newblock {The critical Liouville quantum gravity metric induces the Euclidean
  topology}.
\newblock 2021.

\bibitem[DS11]{shef-kpz}
B.~Duplantier and S.~Sheffield.
\newblock Liouville quantum gravity and {KPZ}.
\newblock {\em Invent. Math.}, 185(2):333--393, 2011, \arxiv{1206.0212}.
  \MR{2819163 (2012f:81251)}

\bibitem[GHPR20]{ghpr-central-charge}
E.~Gwynne, N.~Holden, J.~Pfeffer, and G.~Remy.
\newblock Liouville quantum gravity with matter central charge in (1, 25): a
  probabilistic approach.
\newblock {\em Comm. Math. Phys.}, 376(2):1573--1625, 2020, \arxiv{1903.09111}.
  \MR{4103975}

\bibitem[GM20a]{gm-confluence}
E.~Gwynne and J.~Miller.
\newblock Confluence of geodesics in {L}iouville quantum gravity for {$\gamma
  \in (0,2)$}.
\newblock {\em Ann. Probab.}, 48(4):1861--1901, 2020, \arxiv{1905.00381}.
  \MR{4124527}

\bibitem[GM20b]{local-metrics}
E.~Gwynne and J.~Miller.
\newblock Local metrics of the {G}aussian free field.
\newblock {\em Ann. Inst. Fourier (Grenoble)}, 70(5):2049--2075, 2020,
  \arxiv{1905.00379}. \MR{4245606}

\bibitem[GM21]{gm-uniqueness}
E.~Gwynne and J.~Miller.
\newblock Existence and uniqueness of the {L}iouville quantum gravity metric
  for {$\gamma\in(0,2)$}.
\newblock {\em Invent. Math.}, 223(1):213--333, 2021, \arxiv{1905.00383}.
  \MR{4199443}

\bibitem[GP19]{gp-lfpp-bounds}
E.~{Gwynne} and J.~{Pfeffer}.
\newblock {Bounds for distances and geodesic dimension in Liouville first
  passage percolation}.
\newblock {\em {E}lectronic {C}ommunications in {P}robability}, 24:no. 56, 12,
  2019, \arxiv{1903.09561}.

\bibitem[{Pfe}21]{pfeffer-supercritical-lqg}
J.~{Pfeffer}.
\newblock {Weak Liouville quantum gravity metrics with matter central charge
  $\mathbf{c} \in (-\infty, 25)$}.
\newblock {\em ArXiv e-prints}, April 2021, \arxiv{2104.04020}.

\bibitem[She07]{shef-gff}
S.~Sheffield.
\newblock Gaussian free fields for mathematicians.
\newblock {\em Probab. Theory Related Fields}, 139(3-4):521--541, 2007,
  \arxiv{math/0312099}. \MR{2322706 (2008d:60120)}

\bibitem[Ste97]{steele}
J.~M. Steele.
\newblock {\em Probability theory and combinatorial optimization}, volume~69 of
  {\em CBMS-NSF Regional Conference Series in Applied Mathematics}.
\newblock Society for Industrial and Applied Mathematics (SIAM), Philadelphia,
  PA, 1997. \MR{1422018}

\bibitem[{Var}17]{vargas-dozz-notes}
V.~{Vargas}.
\newblock {Lecture notes on Liouville theory and the DOZZ formula}.
\newblock {\em ArXiv e-prints}, Dec 2017, \arxiv{1712.00829}.

\bibitem[WP20]{pw-gff-notes}
W.~{Werner} and E.~{Powell}.
\newblock {Lecture notes on the Gaussian Free Field}.
\newblock {\em ArXiv e-prints}, April 2020, \arxiv{2004.04720}.

\end{thebibliography}
\bibliographystyle{hmralphaabbrv}

\end{document}